\providecommand{\tabularnewline}{\\}
\def\C{{\mathbb{C}}}
\def\Z{{\mathbb{Z}}}
\theoremstyle{definition}
\newtheorem{lemma}{Lemma}[section]
\newtheorem{theorem}[lemma]{Theorem}
\newtheorem{proposition}[lemma]{Proposition}
\newtheorem{definition}[lemma]{Definition}
\newtheorem{remark}[lemma]{Remark}
\title{Fusion rules for the vertex operator algebra
$V_{L_{2}}^{A_{4}}$}
\author{Chongying Dong\thanks{Supported by NSF grants}}
\affil{Department of Mathematics, University of
California, Santa Cruz, CA 95064 USA}
\author{Cuipo(Cuibo) Jiang\thanks{Supported  by China NSF grants
10931006,11371245, the RFDP of China(20100073110052), and the
Innovation Program of Shanghai Municipal
Education Commission (11ZZ18)}}
\author{Qifen Jiang\thanks{Supported by China NSF grant 11101269 and a special fund of SJTU YG2011MS35} }
\affil{Department of Mathematics, Shanghai Jiaotong University, Shanghai 200240 China}
\author{Xiangyu Jiao}
\affil{Department of Mathematics, East China Normal University, Shanghai, 200241, China}
\author{Nina Yu}
\affil{Department of Mathematics, University of California, Riverside, CA 92521 USA}
\begin{document}
\maketitle

\begin{abstract}
The fusion rules for vertex operator algebra $V_{L_{2}}^{A_{4}}$ are determined.
\end{abstract}

\section{Introduction }
\def\theequation{1.\arabic{equation}}
\setcounter{equation}{0}

The remaining problem in the classification of rational vertex operator
algebras with central charge $c=1$ is the characterization of $V_{\mathbb{Z}\alpha}^{G}$,
where $\mbox{(\ensuremath{\alpha},\ensuremath{\alpha})=2},$ $G$
is a subgroup of $SO(3)$ isomorphic to $A_{4},$ $S_{4},$  $A_{5}$.
The vertex operator algebra $V_{\mathbb{Z}\alpha}^{G}$ has not been
understood fully since $G$ is not an abelian group. In the case
$G=A_{4}$, the rationality, $C_{2}$-cofiniteness and classification
of irreducible modules of $V_{\mathbb{Z}\alpha}^{A_{4}}$ have been
established in \cite{DJ4}. In this
paper, we determine the fusion rules for $V_{\mathbb{Z}\alpha}^{A_{4}}.$
A characterization of  $V_{\mathbb{Z}\alpha}^{A_{4}}$ is given in \cite{DJ5}.

One important tool in the determination of fusion rules is the quantum dimension of a module over a vertex operator algebra which has been studied systematically in \cite{DJX}. For a rational, $C_{2}$-cofinite,
self-dual vertex operator algebra of CFT type, quantum dimensions
of its irreducible modules have nice properties. In particular, the product of quantum dimensions of two
modules is equal to the quantum dimension of the fusion product of the modules. It turns out that this is
very helpful in determining fusion rules. It has been proved in \cite{DJ4} that the vertex operator
is rational, $C_{2}$-cofinite, self-dual vertex operator algebra of CFT type. So we can apply the results in \cite{DJX} on quantum dimensions to the vertex operator algebra $V_{\mathbb{Z}\alpha}^{A_{4}}.$
The fusion rules for the most cases can be determined by using the quantum dimensions.  For some fusion rules involving irreducible $V_{\mathbb{Z}\alpha}^{A_{4}}$-modules occurring in some twisted sectors,
 we need to find out the corresponding
$S$-matrix and use Verlinde formula to determine the remaining fusion rules.

The paper is organized as follows: In Section 2, we give some basic
definitions. In Section 3, we recall the vertex operator algebra $V_{\mathbb{Z}\alpha}^{A_{4}}$
and give the realization of all irreducible modules of $V_{\mathbb{Z}\alpha}^{A_{4}}$.
We compute the quantum dimensions of the irreducible $V_{\mathbb{Z}\alpha}^{A_{4}}$-modules in Section
4. The portion of
$S$-matrix that we need is listed in the appendix.

\section{Basics}
\def\theequation{2.\arabic{equation}}
\setcounter{equation}{0}

Let $\left(V,Y,1,\omega\right)$ be a vertex operator algebra (see
\cite{FLM}) and $g$ an automorphism of $V$ of finite order $T$.
Denote the decomposition of $V$ into eigenspaces of $g$ as:

\[
V=\oplus_{r\in\mathbb{Z}/T\text{\ensuremath{\mathbb{Z}} }}V^{r}
\]
where $V^{r}=\left\{ v\in V|gv=e^{2\pi ir/T}v\right\} $. Now we recall
notions of twisted modules for vertex operator algebras. Let $W\left\{ z\right\} $
denote the space of $W$-valued formal series in arbitrary complex
powers of $z$ for a vector space $W$.

\begin{definition}A \emph{weak $g$-twisted $V$-module} $M$ is
a vector space with a linear map
\[
Y_{M}:V\to\left(\text{End}M\right)\{z\}
\]

\[
v\mapsto Y_{M}\left(v,z\right)=\sum_{n\in\mathbb{Q}}v_{n}z^{-n-1}\ \left(v_{n}\in\mbox{End}M\right)
\]
which satisfies the following: for all $0\le r\le T-1$, $u\in V^{r}$,
$v\in V$, $w\in M$,

\[
Y_{M}\left(u,z\right)=\sum_{n\in\frac{r}{T}+\mathbb{Z}}u_{n}z^{-n-1},
\]

\[
u_{l}w=0\ for\ l\gg0,
\]

\[
Y_{M}\left(\mathbf{1},z\right)=Id_{M},
\]

\[
z_{0}^{-1}\text{\ensuremath{\delta}}\left(\frac{z_{1}-z_{2}}{z_{0}}\right)Y_{M}\left(u,z_{1}\right)Y_{M}\left(v,z_{2}\right)-z_{0}^{-1}\delta\left(\frac{z_{2}-z_{1}}{-z_{0}}\right)Y_{M}\left(v,z_{2}\right)Y_{M}\left(u,z_{1}\right)
\]

\[
z_{2}^{-1}\left(\frac{z_{1}-z_{0}}{z_{2}}\right)^{-r/T}\delta\left(\frac{z_{1}-z_{0}}{z_{2}}\right)Y_{M}\left(Y\left(u,z_{0}\right)v,z_{2}\right),
\]
 where $\delta\left(z\right)=\sum_{n\in\mathbb{Z}}z^{n}$. \end{definition}

\begin{definition}

A $g$-\emph{twisted $V$-module} is a weak \emph{$g$-}twisted $V$-module\emph{
}$M$ which carries a $\mathbb{C}$-grading induced by the spectrum
of $L(0)$ where $L(0)$ is the component operator of $Y(\omega,z)=\sum_{n\in\mathbb{Z}}L(n)z^{-n-2}.$
That is, we have $M=\bigoplus_{\lambda\in\mathbb{C}}M_{\lambda},$
where $M_{\lambda}=\{w\in M|L(0)w=\lambda w\}$. Moreover we require
that $\dim M_{\lambda}$ is finite and for fixed $\lambda,$ $M_{\frac{n}{T}+\lambda}=0$
for all small enough integers $n.$

\end{definition}

\begin{definition}An \emph{admissible $g$-twisted $V$-module} $M=\oplus_{n\in\frac{1}{T}\mathbb{Z}_{+}}M\left(n\right)$
is a $\frac{1}{T}\mathbb{Z}_{+}$-graded weak $g$-twisted module
such that $u_{m}M\left(n\right)\subset M\left(\mbox{wt}u-m-1+n\right)$
for homogeneous $u\in V$ and $m,n\in\frac{1}{T}\mathbb{Z}.$ $ $

\end{definition}

If $g=Id_{V}$ we have the notions of weak, ordinary and admissible
$V$-modules \cite{DLM3}.

\begin{definition}A vertex operator algebra $V$ is called \emph{$g$-rational}
if the admissible $g$-twisted module category is semisimple. $V$
is called \emph{rational} if $V$ is $1$-rational. \end{definition}

The following lemma about $g$-rational vertex operator algebras is
well known \cite{DLM3}.

\begin{lemma} If $V$ is $g$-rational and $M$ is an irreducible
admissible $g$-twisted $V$-module, then

(1) $M$ is a $g$-twisted $V$-module and there exists a number $\lambda\in\mathbb{C}$
such that $M=\oplus_{n\in\frac{1}{T}\mathbb{Z_{+}}}M_{\lambda+n}$
where $M_{\lambda}\neq0.$ $\lambda$ is called the conformal weight
of $M;$

(2) There are only finitely many irreducible admissible $g$-twisted
$V$-modules up to isomorphism. \end{lemma}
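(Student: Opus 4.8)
The plan is to reduce everything to the representation theory of the ($g$-twisted) Zhu algebra $A_{g}(V)$ constructed in \cite{DLM3}. The two facts I would import from there are: first, the assignment $M\mapsto M(0)$ sets up a bijection between the isomorphism classes of irreducible admissible $g$-twisted $V$-modules and the isomorphism classes of irreducible $A_{g}(V)$-modules, with $M(0)$ irreducible over $A_{g}(V)$ whenever $M$ is irreducible; and second, $g$-rationality of $V$ forces $A_{g}(V)$ to be a finite-dimensional semisimple associative algebra. Granting these, both assertions follow quickly.

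For part (1), after shifting the grading so that $M(0)\neq0$, I would first observe that $L(0)=\omega_{1}$ has operator weight $\text{wt}\,\omega-1-1=0$, so $L(0)M(n)\subseteq M(n)$ for every $n$; in particular $L(0)$ acts on $M(0)$. Since $A_{g}(V)$ is finite-dimensional, its irreducible module $M(0)$ is finite-dimensional over $\mathbb{C}$, and since the class of $\omega$ is central in $A_{g}(V)$ (again \cite{DLM3}), the operator $L(0)=o(\omega)$ is an $A_{g}(V)$-module endomorphism of the finite-dimensional irreducible module $M(0)$. By Schur's lemma it acts as a scalar $\lambda\in\mathbb{C}$, so $M(0)=M_{\lambda}\neq0$. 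To propagate this grading upward I would use the commutator formula $[L(0),v_{m}]=(\text{wt}\,v-m-1)v_{m}$ for homogeneous $v$: if $w\in M_{\lambda}$ then $L(0)v_{m}w=(\lambda+\text{wt}\,v-m-1)v_{m}w$ while $v_{m}w\in M(\text{wt}\,v-m-1)$. Because $M$ is irreducible, the admissible submodule generated by $M(0)$ must be all of $M$, so $M(n)$ is spanned by vectors obtained by applying strings of modes to elements of $M_{\lambda}$, each such vector lying in a single degree; hence $L(0)$ acts on $M(n)$ as the scalar $\lambda+n$ and $M(n)=M_{\lambda+n}$. This exhibits the claimed $\mathbb{C}$-grading $M=\oplus_{n\in\frac{1}{T}\mathbb{Z}_{+}}M_{\lambda+n}$, and the finiteness of each $\dim M_{\lambda+n}$ follows from the finiteness of $\dim M(0)$ together with the structure results of \cite{DLM3}; thus $M$ is an ordinary $g$-twisted module with conformal weight $\lambda$.

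For part (2), the bijection $M\mapsto M(0)$ reduces the counting of irreducible admissible $g$-twisted $V$-modules to the counting of irreducible $A_{g}(V)$-modules. Since $A_{g}(V)$ is finite-dimensional and semisimple, it is a finite product of matrix algebras over $\mathbb{C}$ and therefore has only finitely many irreducible modules up to isomorphism. Transporting this finiteness back through the bijection gives assertion (2).

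The substantive content is entirely packaged in the cited Zhu-algebra machinery: establishing the module correspondence $M\leftrightarrow M(0)$ and, above all, deducing finite-dimensionality and semisimplicity of $A_{g}(V)$ from $g$-rationality. Once those are in hand the remaining steps — the weight computation for $L(0)$, the application of Schur's lemma, and the commutator propagation — are routine, so I expect no real obstacle beyond correctly invoking \cite{DLM3}.
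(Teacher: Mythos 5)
The paper does not prove this lemma at all: it is stated as ``well known'' with a citation to \cite{DLM3}, so there is no internal argument to compare against. Your reconstruction via the twisted Zhu algebra $A_{g}(V)$ --- the bijection $M\mapsto M(0)$ between irreducible admissible $g$-twisted modules and irreducible $A_{g}(V)$-modules, semisimplicity and finite-dimensionality of $A_{g}(V)$ under $g$-rationality, centrality of the class of $\omega$, Schur's lemma, and the commutator formula $[L(0),v_{m}]=(\mathrm{wt}\,v-m-1)v_{m}$ --- is precisely the route taken in \cite{DLM3} itself, so your proof is faithful to the paper's cited source; the scalar action of $L(0)$ on $M(0)$, its propagation to $M(n)$, and the finiteness count in (2) are correct and complete as written.

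The one soft spot is the finite-dimensionality of the higher graded pieces. You assert that ``the finiteness of each $\dim M_{\lambda+n}$ follows from the finiteness of $\dim M(0)$ together with the structure results of \cite{DLM3}.'' It does not follow from $\dim M(0)<\infty$ by any routine argument: your Schur-plus-commutator reasoning produces the $L(0)$-grading $M(n)=M_{\lambda+n}$, but says nothing about $\dim M_{\lambda+n}$. Indeed, the associativity theorem for admissible twisted modules gives $M(n)=\sum_{u\in V}u_{\mathrm{wt}\,u-1-n}M(0)$, and since $u$ ranges over the infinite-dimensional space $V$ this sum is not visibly finite-dimensional; cutting it down requires genuine input from $g$-rationality. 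In \cite{DLM3} the statement that every irreducible admissible $g$-twisted module over a $g$-rational vertex operator algebra is ordinary is a conclusion of the same theorem that establishes finite-dimensionality of $A_{g}(V)$, proved alongside it rather than deduced from it, so at this step you are in effect citing the very statement being proved. Since the paper itself treats the whole lemma as a citation, this does not sink your write-up, but the phrase ``follows from the finiteness of $\dim M(0)$'' should be replaced by an honest appeal to that theorem of \cite{DLM3} (or by an actual argument, which is the only genuinely non-formal point in the lemma).
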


\begin{definition} We say that a vertex operator algebra $V$ is
\emph{$C_{2}$-cofinite} if $V/C_{2}(V)$ is finite dimensional, where
$C_{2}(V)=\langle v_{-2}u|v,u\in V\rangle.$ \end{definition}

\begin{remark} If $V$ is a vertex operator algebra satisfying $C_{2}$-cofinite
property, $V$ has only finitely many irreducible admissible modules
up to isomorphism \cite{DLM3, L2}.

\end{remark}

\begin{definition} Let $M=\bigoplus_{n\in\frac{1}{T}\mathbb{Z}_{+}}M(n)$
be an admissible $g$-twisted $V$-module, the\emph{ contragredient
module }$M'$ is defined as follows:
\[
M'=\bigoplus_{n\in\frac{1}{T}\mathbb{Z}_{+}}M(n)^{*},
\]
where $M(n)^{*}=\mbox{Hom}_{\mathbb{C}}(M(n),\mathbb{C}).$ The vertex
operator $Y_{M'}(v,z)$ is defined for $v\in V$ via
\begin{eqnarray*}
\langle Y_{M'}(v,z)f,u\rangle=\langle f,Y_{M}(e^{zL(1)}(-z^{-2})^{L(0)}v,z^{-1})u\rangle,
\end{eqnarray*}
where $\langle f,w\rangle=f(w)$ is the natural paring $M'\times M\to\mathbb{C}.$
\end{definition}

\begin{remark} 1.  $(M',Y_{M'})$ is an admissible $g^{-1}$-twisted
$V$-module \cite{FHL}.

2. We can also define the contragredient module $M'$ for a $g$-twisted
$V$-module $M.$ In this case, $M'$ is a $g^{-1}$-twisted $V$-module.
Moreover, $M$ is irreducible if and only if $M'$ is irreducible.

\end{remark}

Now we  review the notions of intertwining operators and fusion rules
from \cite{FHL}.

\begin{definition} Let $(V,\ Y)$ be a vertex operator algebra and
let $(W^{1},\ Y^{1}),\ (W^{2},\ Y^{2})$ and $(W^{3},\ Y^{3})$ be
$V$-modules. An \emph{intertwining operator} of type $\left(\begin{array}{c}
W^{3}\\
W^{1\ }W^{2}
\end{array}\right)$ is a linear map
\[
I(\cdot,\ z):\ W^{1}\to\text{\ensuremath{\mbox{Hom}(W^{2},\ W^{3})\{z\}}}
\]

\[
u\to I(u,\ z)=\sum_{n\in\mathbb{Q}}u_{n}z^{-n-1}
\]
 satisfying:

(1) for any $u\in W^{1}$ and $v\in W^{2}$, $u_{n}v=0$ for $n$
sufficiently large;

(2) $I(L(-1)v,\ z)=(\frac{d}{dz})I(v,\ z)$;

(3) (Jacobi identity) for any $u\in V,\ v\in W^{1}$

\[
z_{0}^{-1}\delta\left(\frac{z_{1}-z_{2}}{z_{0}}\right)Y^{1}(u,\ z_{1})I(v,\ z_{2})-z_{0}^{-1}\delta\left(\frac{-z_{2}+z_{1}}{z_{0}}\right)I(v,\ z_{2})Y^{3}(u,\ z_{1})
\]
\[
=z_{2}^{-1}\left(\frac{z_{1}-z_{0}}{z_{2}}\right)I(Y^{2}(u,\ z_{0})v,\ z_{2}).
\]

The space of all intertwining operators of type $\left(\begin{array}{c}
W^{3}\\
W^{1}\ W^{2}
\end{array}\right)$ is denoted by
$$I_{V}\left(\begin{array}{c}
W^{3}\\
W^{1}\ W^{2}
\end{array}\right).$$ Let $N_{W^{1},\ W^{2}}^{W^{3}}=\dim I_{V}\left(\begin{array}{c}
W^{3}\\
W^{1}\ W^{2}
\end{array}\right)$. These integers $N_{W^{1},\ W^{2}}^{W^{3}}$ are usually called the
\emph{fusion rules}. As usual, we use $M^{i'}$ to denote $\left(M^{i}\right)^{'}$,
the contragredient module of $M^{i}$.\end{definition}

%\begin{remark}\label{Intertwining expression} \cite{FZ} Let $M^{i}=\oplus_{n\in\mathbb{Z}}M^{i}(n),$
%$i=1,2,3$ be irreducible modules for a vertex operator algebra $V,$
%and the corresponding conformal weights are $h_{i}$, $i=1,2,3$.
%If $I(\cdot,z)$ is an intertwining operator of type $\left(\begin{array}{c}
%M^{3}\\
%M^{1}\ M^{2}
%\end{array}\right),$ then $I(\cdot,z)$ can be written as
%\[
%I(v,z)=\sum_{n\in\mathbb{Z}}v(n)z^{-n-1}z^{-h_{1}-h_{2}+h_{3}}
%\]
 %such that for honogeneous $v\in M^{1},$ $v(n)M^{2}(m)\subset M^{3}\left(m+deg\, v-1-n\right),$
%where $deg\, v=k$ means $v\in M^{1}(k).$

%We will write $o(v)=v(deg\, v-1).$

%\end{remark}

\begin{definition} Let $V$ be a vertex operator algebra, and $W^{1},$
$W^{2}$ be two $V$-modules. A module $(W,I)$, where $I\in I_{V}\left(\begin{array}{c}
\ \ W\ \\
W^{1}\ \ W^{2}
\end{array}\right),$ is called a \emph{tensor product} (or fusion product) of $W^{1}$
and $W^{2}$ if for any $V$-module $M$ and $\mathcal{Y}\in I_{V}\left(\begin{array}{c}
\ \ M\ \\
W^{1}\ \ W^{2}
\end{array}\right),$ there is a unique $V$-module homomorphism $f:W\rightarrow M,$ such
that $\mathcal{Y}=f\circ I.$ As usual, we denote $(W,I)$ by $W^{1}\boxtimes_{V}W^{2}.$
\end{definition}

The basic result is that the fusion product exists if $V$ is rational.
It is well known that if $V$ is rational, for any two irreducible
$V$-modules $W^{1},\ W^{2},$
\[
W^{1}\boxtimes_{V}W^{2}=\sum_{W}N_{W^{1},\ W^{2}}^{W}W
\]
 where $W$ runs over the set of equivalence classes of irreducible
$V$-modules.

It is well known that fusion rules have the following symmetry property
\cite{FHL}.

\begin{proposition}\label{fusion rule symmmetry property} Let $W^{i}$ $\left(i=1,2,3\right)$
be $V$-modules. Then

\[
N_{W^{1},W^{2}}^{W^{3}}=N_{W^{2},W^{1}}^{W^{3}},N_{W^{1},W^{2}}^{W^{3}}=N_{W^{1},(W^{3})'}^{(W^{2})'}.
\]
\end{proposition}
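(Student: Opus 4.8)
The plan is to prove each of the two asserted equalities by exhibiting an explicit linear isomorphism between the corresponding spaces of intertwining operators; since the fusion rules are by definition the dimensions of these spaces, such isomorphisms immediately give the equalities of integers. Throughout I would verify that each constructed map sends an intertwining operator to an intertwining operator of the desired type by checking the three axioms of Definition of the intertwining operator (truncation, the $L(-1)$-derivative property, and the Jacobi identity), and that the map is linear with an inverse of the same form.

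For the first equality $N_{W^1,W^2}^{W^3}=N_{W^2,W^1}^{W^3}$, I would use the skew-symmetry map
\[
\Omega: I_V\left(\begin{array}{c} W^3 \\ W^1\ W^2 \end{array}\right)\to I_V\left(\begin{array}{c} W^3 \\ W^2\ W^1 \end{array}\right)
\]
defined for $u\in W^1$, $v\in W^2$ by
\[
\Omega(I)(v,z)u=e^{zL(-1)}I(u,e^{\pi i}z)v.
\]
The truncation condition for $\Omega(I)$ follows at once from that of $I$; the $L(-1)$-derivative property follows from the conjugation relation between $e^{zL(-1)}$ and $L(-1)$ together with the same property for $I$; and the Jacobi identity for $\Omega(I)$ reduces to that of $I$ after the substitution $z\mapsto e^{\pi i}z$ and a relabeling of the $\delta$-function variables. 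The map $\Omega$ is linear and invertible, an inverse being given by the analogous formula with the opposite branch $e^{-\pi i}z$, so it is an isomorphism.

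For the second equality $N_{W^1,W^2}^{W^3}=N_{W^1,(W^3)'}^{(W^2)'}$, I would construct the adjoint map
\[
A: I_V\left(\begin{array}{c} W^3 \\ W^1\ W^2 \end{array}\right)\to I_V\left(\begin{array}{c} (W^2)' \\ W^1\ (W^3)' \end{array}\right)
\]
by mirroring the definition of the contragredient vertex operator recalled above: for $u\in W^1$, $f\in (W^3)'$, and $w\in W^2$,
\[
\langle A(I)(u,z)f,w\rangle=\langle f,I(e^{zL(1)}(-z^{-2})^{L(0)}u,z^{-1})w\rangle.
\]
Here one transports each of the three axioms through the contragredient pairing and the formal substitution $z\mapsto z^{-1}$, invoking the definitions of the module actions $Y_{(W^3)'}$ and $Y_{(W^2)'}$. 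As before, $A$ is linear and its inverse is the analogous adjoint construction on the target, so it is an isomorphism. Combining $\Omega$ and $A$ (and their obvious variants) yields both stated symmetries.

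The main obstacle is the verification of the Jacobi identity for the adjoint map $A$. One must transform the Jacobi identity satisfied by $I$ under the substitution $z\mapsto z^{-1}$ composed with the operator $e^{zL(1)}(-z^{-2})^{L(0)}$, and then match the resulting $\delta$-function expressions and formal powers of the variables against the Jacobi identity required for an intertwining operator landing in $(W^2)'$ with second input in $(W^3)'$. The delicate point is the bookkeeping of the complex and fractional powers of $z$ implicit in the formal series space $W^3\{z\}$, together with the interaction of these substitutions with the contragredient actions; once this identity is checked, the remaining verifications and the invertibility of both maps are routine.
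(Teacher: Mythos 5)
Your proposal is correct: the paper itself gives no proof of this proposition, stating it as well known and citing [FHL], and your argument is precisely the standard one from that reference --- the skew-symmetry isomorphism $\Omega(I)(v,z)u=e^{zL(-1)}I(u,e^{\pi i}z)v$ for the first equality and the adjoint map $\langle A(I)(u,z)f,w\rangle=\langle f,I(e^{zL(1)}(-z^{-2})^{L(0)}u,z^{-1})w\rangle$ (together with $(W')'\cong W$ for modules with finite-dimensional graded pieces) for the second. Since this is exactly the construction behind the cited result, there is nothing to add beyond noting that your branch choices $e^{\pm\pi i}z$ correctly give mutually inverse maps.
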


Now we recall some notions about quantum dimensions.

\begin{definition}Let $M=\oplus_{n\in\frac{1}{T}\mathbb{Z}_{+}}M_{\lambda+n}$
be a $g$-twisted $V$-module, the \emph{formal character} of $M$
is defined as

\[
\mbox{ch}_{q}M=\mbox{tr}_{M}q^{L\left(0\right)-c/24}=q^{\lambda-c/24}\sum_{n\in\frac{1}{T}\mathbb{Z}_{+}}\left(\dim M_{\lambda+n}\right)q^{n},
\]
where $c$ is the central charge of the vertex operator algebra $V$
and $\lambda$ is the conformal weight of $M$. \end{definition}

It is proved \cite{Z,DLM4} that $\mbox{ch}_{q}M$ converges to a
holomorphic function in the domain $|q|<1.$ We denote the holomorphic
function $\mbox{ch}_{q}M$ by $Z_{M}\left(\tau\right)$. Here and
below, $\tau$ is in the upper half plane $\mathbb{H}$ and $q=e^{2\pi i\tau}$.

Let $M^{0},\cdots,M^{d}$ be the inequivalent irreducible $V$-modules
with corresponding conformal weights $\lambda_{i}$ and $M^{0}\cong V$.
Define
\[
Z_{i}\left(u,v,\tau\right)=\mbox{tr}_{M^{i}}e^{2\pi i\left(v\left(0\right)\right)+\left(v,u\right)\text{/2}}q^{L\left(0\right)+u\left(0\right)+\left(u,u\right)/2-c/24}
\]
 for $u,v\in V_{1}$. Notice that if $u,v=0,$ $Z_{i}(u,v,\tau)=Z_{i}(\tau).$
Then we have the following theorem \cite{M,Z,DLM,DLM4}:

\begin{theorem} Let $V$ be a rational, $C_{2}$-cofinite vertex
operator algebra of CFT type. Assume $u,v\in V_{1}$ such that $u,v$
span an abelian Lie subalgebra of $V_{1}$. Let $\gamma=\left(\begin{array}{cc}
a & b\\
c & d
\end{array}\right)\in SL\left(2,\mathbb{Z}\right).$ Then $Z_{i}\left(u,v,\tau\right)$ converges to a holomorphic function
in the upper half plane and
\[
Z_{i}\left(u,v,\gamma\tau\right)=\sum_{j=0}^{d}\gamma_{i,j}Z_{j}\left(au+bv,cu+dv,\tau\right),
\]
 where $\gamma\tau=\frac{a\tau+b}{c\tau+d}$ and $\gamma_{i,j}\in\mathbb{C}$
are independent of the choice of $u,v$.  \end{theorem}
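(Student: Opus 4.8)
The plan is to deduce this statement from Zhu's theory of genus-one correlation functions, in the $C_2$-cofinite form refined by Miyamoto and Dong--Li--Mason, upgraded to allow the weight-one insertions $u(0)$ and $v(0)$. I would first record how each hypothesis is used. Rationality guarantees that the list $M^0,\dots,M^d$ of irreducibles is finite and complete, so that a transformation law expressing $Z_i(u,v,\gamma\tau)$ as a \emph{finite} combination of the $Z_j$ is even meaningful. $C_2$-cofiniteness supplies the finiteness that forces the characters to satisfy a modular differential equation and controls convergence. CFT type ($V=\bigoplus_{n\ge 0}V_n$ with $V_0=\mathbb{C}\mathbf{1}$) normalizes the vacuum, the central charge $c$, and the bilinear form on $V_1$. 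Finally, the assumption that $u,v$ span an abelian Lie subalgebra of $V_1$ guarantees that the zero modes $u(0)$ and $v(0)$ commute, so that the operator $e^{2\pi i v(0)+(v,u)/2}\,q^{L(0)+u(0)+(u,u)/2-c/24}$ is unambiguously defined and $Z_i(u,v,\tau)$ makes sense.

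First I would set up the genus-one $n$-point trace functions in Zhu's square-bracket formalism, which equips $V$ with a second vertex operator algebra structure $(V,Y[\cdot,z],\omega-c/24)$ whose grading is adapted to the variable $q=e^{2\pi i\tau}$. For homogeneous $v_1,\dots,v_n\in V$ I would study
\[
F_i\big((v_1,z_1),\dots,(v_n,z_n);u,v,\tau\big)=\mathrm{tr}_{M^i}\,Y\!\big(q_1^{L(0)}v_1,q_1\big)\cdots Y\!\big(q_n^{L(0)}v_n,q_n\big)\,e^{2\pi i v(0)+(v,u)/2}\,q^{L(0)+u(0)+(u,u)/2-c/24},
\]
with $q_j=e^{2\pi i z_j}$, so that $Z_i(u,v,\tau)$ is the case $n=0$. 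The key technical output I would reprove in the presence of the insertions is Zhu's recursion: each such function is expressed through lower $n$-point functions with coefficients built from Weierstrass $\wp$-functions and Eisenstein series $G_{2k}(\tau)$, where the weight-one insertions $u(0),v(0)$ merely shift the elliptic argument and contribute additional $G_2$-type terms.

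Next I would use $C_2$-cofiniteness to pass from the recursion to a modular differential equation. Because $V/C_2(V)$ is finite dimensional, the one-point functions span a finite-dimensional space over the ring of quasimodular forms, and iterating a suitable modular (Serre) derivative $D$ on $Z_i(u,v,\tau)$ yields a monic linear ordinary differential equation
\[
\Big(D^{k}+g_1(\tau)D^{k-1}+\cdots+g_k(\tau)\Big)Z_i(u,v,\tau)=0,
\]
whose coefficients $g_j(\tau)$ are holomorphic modular forms. Convergence of the trace to a holomorphic function on $\mathbb{H}$ follows from this equation together with the controlled growth of the local solutions near the cusp, dictated by the conformal weights $\lambda_i$. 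Since the $g_j$ are genuine modular forms, the finite-dimensional solution space is invariant under the natural weight-zero action of $SL(2,\mathbb{Z})$, and this invariance is precisely what produces the matrix $(\gamma_{i,j})$.

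Finally I would account for the insertions and the shift $(u,v)\mapsto(au+bv,cu+dv)$. The operator $o(u)=u(0)$ is the charge operator on each module, and the particular normalization $q^{L(0)+u(0)+(u,u)/2}e^{2\pi i v(0)+(v,u)/2}$ is exactly the one under which the $SL(2,\mathbb{Z})$-action on the torus sends the elliptic data $(u,v)$ to $(au+bv,cu+dv)$, the automorphy factor of a Jacobi form of index $(\cdot,\cdot)$ being absorbed into that normalization. The claim that $\gamma_{i,j}$ is independent of $u,v$ I would obtain by observing that the entire $u,v$-dependence enters through conjugation by the commuting one-parameter groups $\exp(2\pi i s\,u(0))$ and $\exp(2\pi i t\,v(0))$, which commute with $D$ and do not change the coefficients $g_j(\tau)$; hence the transition matrix of the solution space is the same for all $u,v$. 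I expect the main obstacle to be precisely this finiteness-and-convergence core --- establishing the finite-order modular differential equation and the absolute convergence of all the $n$-point traces --- since that is where $C_2$-cofiniteness must be invoked most delicately, through Zhu's recursion and the accompanying estimates.
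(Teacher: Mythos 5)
Your proposal cannot be compared against a proof in the paper, because the paper gives none: this theorem is quoted from the literature (the citations [M], [Z], [DLM], [DLM4] --- Zhu's modular invariance, Miyamoto's extension with a weight-one insertion, and Dong--Liu--Ma / Dong--Li--Mason for the two-variable form stated here). Measured against those proofs, your outline does reconstruct the right strategy: square-bracket formalism, Zhu's recursion, $C_{2}$-cofiniteness forcing a modular differential equation, convergence from the regular singular point at $q=0$, and modularity from finite dimensionality of a space of solutions. But two steps are genuinely gapped, and one of them is wrong as stated.

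The wrong step is your argument that $\gamma_{i,j}$ is independent of $u,v$ because ``the entire $u,v$-dependence enters through conjugation by $\exp(2\pi i s\,u(0))$ and $\exp(2\pi i t\,v(0))$.'' Conjugation inside a trace is invisible: $\mathrm{tr}(AXA^{-1})=\mathrm{tr}(X)$. The operators $e^{2\pi iv(0)}$ and $q^{u(0)}$ are genuine insertions, not conjugations; if the dependence were by conjugation, $Z_{i}(u,v,\tau)$ would equal $Z_{i}(0,0,\tau)$ and the theorem would be vacuous. In the actual proofs the independence is structural: the coefficients are the transformation matrix of a space of conformal blocks (equivalently, they are pinned down by the $u=v=0$ specialization), defined before any insertion is made. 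The second gap is that your solution-space argument works with functions of $\tau$ for a \emph{fixed} pair $(u,v)$, whereas the transformation law mixes the insertion data, sending $(u,v)$ to $(au+bv,cu+dv)$; a fixed finite-dimensional $SL(2,\mathbb{Z})$-stable solution space in $\tau$ alone cannot by itself force $Z_{i}(u,v,\gamma\tau)$ to be a combination of precisely the $Z_{j}$ with the \emph{transformed} insertions. One must either treat the whole family with $(u,v)$ as elliptic (Jacobi-form) variables, or reduce to the orbifold trace functions of [DLM4] via Li's $\Delta$-operator when $e^{2\pi iu(0)},e^{2\pi iv(0)}$ have finite order and then handle general $u,v$; relatedly, extending Zhu's recursion to traces containing $e^{2\pi iv(0)}q^{u(0)}$ is not a routine ``shift of the elliptic argument plus $G_{2}$-terms'' but the core technical content of [M] and [DLM4], involving twisted Weierstrass and Eisenstein series. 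So the skeleton is right, but the two load-bearing joints --- the generalized recursion and the $u,v$-independence of the matrix --- are exactly where your sketch either waves or fails.
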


\begin{definition} In the case $\gamma=S=\left(\begin{array}{cc}
0 & -1\\
1 & 0
\end{array}\right)$, we have
\[
Z_{i}\left(u,v,-\frac{1}{\tau}\right)=\sum_{j=0}^{d}S_{i,j}Z_{j}\left(-v,u,\tau\right).
\]
The matrix $S=\left(S_{i,j}\right)_{i,j=0}^{d}$ is called \emph{$S$-matrix}.
\end{definition}

\begin{remark}\label{quan dim Si0/S00}Assume $V$ is a simple, rational
and $C_{2}$-cofinite vertex operator algebra of $CFT$ type with
$V\cong V'$. Let $M^{i}$ be as before where $M^{0}\cong V$. Also
assume $\min_{i}\left\{ \lambda_{i}\right\} =\lambda_{0}=0$ and $\lambda_{i}>0\ \forall i\not=0$.
Then $\mbox{qdim}_{V}M^{i}=\frac{S_{i,0}}{S_{0,0}}$ \cite{DJX}.
\end{remark}

The following theorem will play an important role in the last section
\cite{V,H}.

\begin{theorem} \label{Verlinde formula} Let $V$ be a rational
and $C_{2}$-cofinite simple vertex operator algebra of CFT type and
assume $V\cong V'$. Let $S=\left(S_{i,j}\right)_{i,j=0}^{d}$ be
the $S$-matrix as defined above. Then

(1) $\left(S^{-1}\right)_{i,j}=S_{i,j'}=S_{i',j}$, and $S_{i',j'}=S_{i,j};$

(2) $S$ is symmetric and $S^{2}=\left(\delta_{i,j'}\right)$;

(3) $N_{i,j}^{k}=\sum_{s=0}^{d}\frac{S_{i,s}S_{j,s}S_{s,k}^{-1}}{S_{0,s}}$.

\end{theorem}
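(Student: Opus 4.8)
The plan is to derive parts (1) and (2) as direct consequences of the modular invariance theorem stated above, and to treat part (3)---the Verlinde formula proper---as the substantive analytic statement, whose proof rests on the modular tensor category structure of the module category.

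For (2), I would start from the fact that $S=\left(\begin{smallmatrix}0 & -1\\ 1 & 0\end{smallmatrix}\right)$ satisfies $S^{2}=-I$ in $SL(2,\mathbb{Z})$, and that $-I$ fixes every $\tau$. Iterating the defining relation $Z_{i}(u,v,-1/\tau)=\sum_{j}S_{i,j}Z_{j}(-v,u,\tau)$ once more gives $Z_{i}(u,v,\tau)=\sum_{k}(S^{2})_{i,k}Z_{k}(-u,-v,\tau)$, so that $S^{2}$ is the matrix intertwining the characters $Z_{i}(u,v,\tau)$ and $Z_{k}(-u,-v,\tau)$. The key identification is that sending $(u,v)\mapsto(-u,-v)$ in the trace function realizes the contragredient functor, i.e. $Z_{i}(-u,-v,\tau)=Z_{i'}(u,v,\tau)$, because the invariant pairing on $M^{i}$ negates the action of the weight-one currents. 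Matching coefficients and using the linear independence of the $Z_{i}$ then forces $(S^{2})_{i,k}=\delta_{i,k'}$, i.e. $S^{2}=(\delta_{i,j'})=:C$. The symmetry of $S$ is the more delicate half: I would obtain it from the uniqueness of the transition coefficients $\gamma_{i,j}$ in the modular invariance theorem together with the nondegeneracy of the canonical pairing between a module and its contragredient, which pins down $S_{i,j}=S_{j,i}$. Once $S^{2}=C$ and the symmetry of $S$ are available, part (1) is pure linear algebra: from $S^{2}=C$ and $C^{2}=I$ one checks $CS=SC=S^{-1}$, whence $(S^{-1})_{i,j}=(SC)_{i,j}=S_{i,j'}$ and equally $(S^{-1})_{i,j}=(CS)_{i,j}=S_{i',j}$, while $CSC=S$ yields $S_{i',j'}=S_{i,j}$.

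The real content is (3), for which I would follow Huang's proof of the Verlinde conjecture. First, one upgrades the module category to a modular tensor category: rationality, $C_{2}$-cofiniteness, CFT type and self-duality guarantee the convergence, associativity and commutativity of products and iterates of intertwining operators, producing braiding and twist isomorphisms. Next one defines the categorical $s$-matrix through the trace of the double braiding (the Hopf-link invariant) on $M^{i}\boxtimes M^{j}$, and shows it agrees, up to the normalization $S_{0,0}$, with the analytic $S$-matrix above; this is done by expressing the genus-one one-point functions as torus correlation functions assembled from intertwining operators and checking that $\tau\mapsto-1/\tau$ reproduces the braiding trace. Granting this identification, (3) follows from the standard modular-tensor-category fact that the commuting fusion matrices $N_{i}=(N_{i,j}^{k})$ are simultaneously diagonalized by $S$, with common eigenvalues $S_{i,s}/S_{0,s}$; reading off the $(j,k)$ entry of $S^{-1}N_{i}S$ gives exactly $N_{i,j}^{k}=\sum_{s}S_{i,s}S_{j,s}S_{s,k}^{-1}/S_{0,s}$.

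The main obstacle is precisely this identification of the analytically defined $S$-matrix with the categorical braiding $s$-matrix, and, underpinning it, the construction of the modular tensor category itself. The delicate analytic input---convergence and single-valued analytic continuation of the genus-one multi-point conformal blocks built from intertwining operators, and the modular invariance of the space they span---is the heart of Huang's work and is what allows the fusion matrices to be diagonalized by the same matrix that governs the modular transformation of the characters. In this paper I would simply invoke \cite{V,H} for (3), since reproving it from scratch is far beyond what is needed here.
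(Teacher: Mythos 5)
Your proposal ends exactly where the paper does: the paper offers no proof of this theorem at all, but simply quotes it as a known result of Verlinde and Huang with the citation \cite{V,H}, and you likewise conclude that one should invoke \cite{V,H} rather than reprove it. Your preliminary sketch (parts (1)--(2) from iterating the modular transformation and identifying $(u,v)\mapsto(-u,-v)$ with contragredients, part (3) from Huang's modular tensor category argument) is a reasonable outline of the proof that lives in those references, so the two approaches are essentially the same.
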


% something about quantum diemensons @@@@@@@@@@@@@@@@@@@@@

We need the concept of quantum dimensions from \cite{DJX}.

\begin{definition} \label{quantum dimension}Let $V$ be a vertex
operator algebra and $M$ a $g$-twisted $V$-module such that $Z_{V}\left(\tau\right)$
and $Z_{M}\left(\tau\right)$ exists. The quantum dimension of $M$
over $V$ is defined as
\[
\mbox{qdim}_{V}M=\lim_{y\to0}\frac{Z_{M}\left(iy\right)}{Z_{V}\left(iy\right)},
\]
 where $y$ is real and positive. \end{definition}

From now on, we assume $V$ is a rational, $C_{2}$-cofinite vertex
operator algebra of CFT type with $V\cong V'$. Let $M^{0}\cong V,\, M^{1},\,\cdots,\, M^{d}$
denote all inequivalent irreducible $V$-modules. Moreover, we assume
the conformal weights $\lambda_{i}$ of $M^{i}$ are positive for
all $i>0.$ It is proved in \cite{DJ4} that $V_{L_{2}}^{A_{4}}$
satisfies all the assumptions.

Recall that simple module $M^{i}$ is called a \emph{simple
current }if $M^{i}\boxtimes M^{j}$ is simple $\forall j=0,\cdots,d.$ Here are some results on quantum dimensions \cite{DJX}.

\begin{proposition}\label{quantum-product} Let $V$ be a vertex operator algebra as before. Then

(1) $q\dim_{V}M^{i}\geq1,$ $\forall i=0,\cdots,d.$

(2)  For any $i,\, j=0,\cdots,\, d,$
\[
q\dim_{V}\left(M^{i}\boxtimes M^{j}\right)=q\dim_{V}M^{i}\cdot q\dim_{V}M^{j}
\]

(3) A $V$-module
$M$ is a simple current if and only if $q\dim_{V}M=1$.
\end{proposition}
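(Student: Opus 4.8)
The plan is to establish the three parts in the order (2), (1), (3), since (1) and (3) rely on the multiplicativity in (2). Throughout I abbreviate $d_k=\mbox{qdim}_V M^k$ and use the identity $d_k=S_{k,0}/S_{0,0}$ from Remark \ref{quan dim Si0/S00}, the Verlinde formula of Theorem \ref{Verlinde formula}, and the decomposition $M^i\boxtimes M^j=\sum_k N_{i,j}^k M^k$. I also record that $\mbox{qdim}_V$ is additive over finite direct sums: since $Z_{M\oplus N}=Z_M+Z_N$, the defining limit splits, so $\mbox{qdim}_V(\oplus_k N_{i,j}^k M^k)=\sum_k N_{i,j}^k d_k$. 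For part (2) I would insert the Verlinde formula into $\mbox{qdim}_V(M^i\boxtimes M^j)=\frac{1}{S_{0,0}}\sum_k N_{i,j}^k S_{k,0}$, write $N_{i,j}^k=\sum_s S_{i,s}S_{j,s}(S^{-1})_{s,k}/S_{0,s}$, and interchange the two sums; the inner sum $\sum_k(S^{-1})_{s,k}S_{k,0}=(S^{-1}S)_{s,0}=\delta_{s,0}$ collapses everything to $s=0$, leaving $\sum_k N_{i,j}^k S_{k,0}=S_{i,0}S_{j,0}/S_{0,0}$. Dividing by $S_{0,0}$ gives $\mbox{qdim}_V(M^i\boxtimes M^j)=d_i d_j$, which is (2).

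For part (1) the first step is to note $d_{i'}=d_i$: by Theorem \ref{Verlinde formula}(1) and $0'=0$ (valid because $V\cong V'$) one has $S_{i',0}=(S^{-1})_{i,0}=S_{i,0'}=S_{i,0}$. Applying (2) with $j=i'$ then gives $d_i^2=d_i d_{i'}=\mbox{qdim}_V(M^i\boxtimes M^{i'})=\sum_k N_{i,i'}^k d_k$. I would next observe that the vacuum occurs in this product, i.e. $N_{i,i'}^0\ge1$: by Proposition \ref{fusion rule symmmetry property} one has $N_{i,i'}^0=N_{M^i,V}^{M^i}=N_{V,M^i}^{M^i}$, and the module vertex operator $Y_{M^i}$ is a nonzero element of $I_V\binom{M^i}{V\ M^i}$. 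Since every $d_k\ge0$ and $d_0=1$, this yields $d_i^2\ge N_{i,i'}^0\,d_0\ge1$. Finally, $d_i$ is the limit as $y\to0^+$ of the positive real quantities $Z_{M^i}(iy)/Z_V(iy)$, hence is real and nonnegative; together with $d_i^2\ge1$ this forces $d_i\ge1$, proving (1).

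For part (3) I would prove both implications. If $M^i$ is a simple current, then $M^i\boxtimes M^{i'}$ is simple; as it contains $V$ (because $N_{i,i'}^0\ge1$) it must equal $V$, so by (2) and $d_{i'}=d_i$ we get $d_i^2=\mbox{qdim}_V(M^i\boxtimes M^{i'})=\mbox{qdim}_V V=1$, whence $d_i=1$ by the positivity established in (1). Conversely, assume $d_i=1$, so $d_{i'}=1$. Then $\mbox{qdim}_V(M^{i'}\boxtimes M^i)=d_{i'}d_i=1$; writing $M^{i'}\boxtimes M^i=\sum_k N_{i',i}^k M^k$ with all $d_k\ge1$ and $N_{i',i}^0\ge1$, the equality $\sum_k N_{i',i}^k d_k=1$ forces $N_{i',i}^0=1$ and $N_{i',i}^k=0$ for $k\ne0$, i.e. $M^{i'}\boxtimes M^i=V$. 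For an arbitrary $j$ I would then use associativity of the fusion product to compute $M^{i'}\boxtimes(M^i\boxtimes M^j)=(M^{i'}\boxtimes M^i)\boxtimes M^j=V\boxtimes M^j=M^j$; decomposing $M^i\boxtimes M^j=\oplus_k N_{i,j}^k M^k$ and applying $M^{i'}\boxtimes(-)$ shows $\oplus_k N_{i,j}^k(M^{i'}\boxtimes M^k)\cong M^j$ is simple. Since each $M^{i'}\boxtimes M^k$ is nonzero (its quantum dimension is at least $1$), exactly one $k$ can occur and with multiplicity one, so $M^i\boxtimes M^j$ is simple; as $j$ is arbitrary, $M^i$ is a simple current.

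The step I expect to be the main obstacle is the converse in (3): converting the numerical identity $\sum_k N_{i,j}^k d_k=d_j$ into the module-theoretic conclusion that each $M^i\boxtimes M^j$ is irreducible. The bookkeeping of quantum dimensions alone cannot rule out several constituents of total dimension $d_j$, so I must feed in genuine structural input, namely the associativity of $\boxtimes$ and the semisimplicity of the module category guaranteed by rationality of $V$, in order to force a single constituent of multiplicity one.
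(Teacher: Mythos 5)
There is nothing in the paper to compare against: Proposition \ref{quantum-product} is stated without proof, imported from \cite{DJX} (``Here are some results on quantum dimensions \cite{DJX}''). Judged on its own merits, your argument is correct, and it reconstructs the result from exactly the ingredients the paper does state: for (2), Remark \ref{quan dim Si0/S00} ($\mbox{qdim}_{V}M^{k}=S_{k,0}/S_{0,0}$) plus Theorem \ref{Verlinde formula}(3) and the collapse $\sum_{k}(S^{-1})_{s,k}S_{k,0}=\delta_{s,0}$; for (1), the equality $d_{i'}=d_{i}$ from Theorem \ref{Verlinde formula}(1), the bound $N_{i,i'}^{0}\ge 1$ from Proposition \ref{fusion rule symmmetry property} together with the nonzero intertwining operator $Y_{M^{i}}$, and nonnegativity of the defining limit; for (3), the identification $M^{i'}\boxtimes M^{i}\cong V$ followed by associativity. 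This is in substance the same route taken in \cite{DJX}, so your proposal matches the source the paper relies on rather than deviating from it. Two caveats are worth recording. First, the associativity of $\boxtimes$ and the unit property $V\boxtimes M^{j}\cong M^{j}$, which you correctly identify as the indispensable structural input for the converse of (3), are deep facts (Huang's modular tensor category theory for rational, $C_{2}$-cofinite, self-dual VOAs of CFT type) and should carry an explicit citation; they are nonetheless legitimate here, since the paper itself invokes ``associativity of fusion product'' in its proof of (\ref{4.6}). Second, in (3) the paper's definition of simple current applies to simple modules, so for a general $V$-module $M$ with $\mbox{qdim}_{V}M=1$ you should add the one-line observation that $M$ must itself be irreducible (otherwise additivity and part (1) would force $\mbox{qdim}_{V}M\ge 2$); with that remark your proof is complete.
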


\begin{theorem}\label{fix point order} Let $V$ be a rational and
$C_{2}$-cofinite simple vertex operator algebra, $G$ a finite subgroup
of $Aut(V).$ Also assume that $V$ is $g$-rational and the conformal
weight of any irreducible $g$-twisted $V$-module is positive except
for $V$ itself for all $g\in G$. Then $\mbox{qdim}_{V^{G}}V$ exists
and equals to $\left|G\right|$.

\end{theorem}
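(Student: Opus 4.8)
The plan is to convert the computation of $\mathrm{qdim}_{V^{G}}V$ into a comparison of growth rates of twisted trace functions as $y\to 0^{+}$, exploiting the positivity hypothesis on conformal weights to kill every contribution except the vacuum one. For $g\in G$ set $Z_{V}^{g}(\tau)=\mathrm{tr}_{V}\,g\,q^{L(0)-c/24}$; since every automorphism fixes $\omega$ it commutes with $L(0)$, so $\frac{1}{|G|}\sum_{g\in G}g$ is the idempotent projecting $V$ onto $V^{G}$ and hence
\[
Z_{V^{G}}(\tau)=\frac{1}{|G|}\sum_{g\in G}Z_{V}^{g}(\tau).
\]
Isolating the identity term, which is just $\frac{1}{|G|}Z_{V}(\tau)$, gives
\[
\frac{Z_{V}(iy)}{Z_{V^{G}}(iy)}=\frac{|G|}{\,1+\sum_{g\neq e}Z_{V}^{g}(iy)/Z_{V}(iy)\,}.
\]
Because $\mathrm{qdim}_{V^{G}}V=\lim_{y\to 0^{+}}Z_{V}(iy)/Z_{V^{G}}(iy)$ by definition, it suffices to prove that $Z_{V}^{g}(iy)/Z_{V}(iy)\to 0$ as $y\to 0^{+}$ for each $g\neq e$; the existence of the limit and its value $|G|$ then follow immediately.

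To control the $y\to 0^{+}$ regime I would pass to the opposite cusp via the modular $S$-transformation, writing $iy=-1/\tau'$ with $\tau'=i/y$, so that $\tau'\to i\infty$ and $q'=e^{2\pi i\tau'}=e^{-2\pi/y}\to 0$. By Zhu's theorem together with the twisted modular invariance of trace functions \cite{Z,DLM4} (applicable since $V$ is rational, $C_{2}$-cofinite and $g$-rational), $Z_{V}^{g}(-1/\tau')$ is a finite linear combination $\sum_{M}a_{g,M}Z_{M}(\tau')$ in which $M$ runs over the irreducible $g$-twisted $V$-modules and $Z_{M}(\tau')=\mathrm{tr}_{M}(q')^{L(0)-c/24}$. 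Each such character satisfies $Z_{M}(\tau')\sim\dim(M_{\lambda_{M}})\,(q')^{\lambda_{M}-c/24}$ as $\tau'\to i\infty$, so the leading growth of $Z_{V}^{g}(iy)$ is dictated by the smallest conformal weight $\lambda_{M}$ occurring among the $g$-twisted modules.

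This is exactly where the hypothesis enters. For $g=e$ the modules $M$ are the untwisted irreducibles, whose minimal conformal weight is $\lambda_{0}=0$, attained only by $V$; its coefficient is the vacuum entry $S_{0,0}$ of the $S$-matrix, which is nonzero (this underlies the formula $\mathrm{qdim}_{V}M^{i}=S_{i,0}/S_{0,0}$ of Remark~\ref{quan dim Si0/S00}), so $Z_{V}(iy)\sim (S_{0,0}\dim V_{0})\,e^{2\pi c/(24y)}$ with nonzero coefficient. For $g\neq e$ the exception ``$V$ itself'' does not apply, so every irreducible $g$-twisted module has conformal weight at least some $\lambda^{*}(g)>0$, whence $Z_{V}^{g}(iy)=O\!\bigl(e^{2\pi(c/24-\lambda^{*}(g))/y}\bigr)$. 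Dividing, the common factor $e^{2\pi c/(24y)}$ cancels and
\[
\frac{Z_{V}^{g}(iy)}{Z_{V}(iy)}=O\!\bigl(e^{-2\pi\lambda^{*}(g)/y}\bigr)\longrightarrow 0\qquad(y\to 0^{+}),
\]
which is precisely what was needed; substituting back shows the limit exists and equals $|G|$.

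The step I expect to be the main obstacle is the precise invocation of the twisted modular invariance: one must verify that $Z_{V}^{g}(-1/\tau')$ genuinely expands in characters of $g$-twisted modules (so that the positivity hypothesis is applied to the correct sector, noting that the $g^{-1}$-twisted convention would serve equally well since $g^{-1}\neq e$), and that the vacuum coefficient governing $Z_{V}(iy)$ is genuinely nonzero, so that the denominator's growth rate is correctly identified and no spurious cancellation occurs. The assumptions of rationality, $C_{2}$-cofiniteness and $g$-rationality are exactly what make these two points available; granted them, the dominant-balance asymptotics are routine.
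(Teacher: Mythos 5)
Your proposal is correct and takes essentially the same route as the source: the paper itself states this theorem without proof, quoting it from \cite{DJX}, and the argument there is precisely yours --- write $Z_{V^{G}}(\tau)=\frac{1}{|G|}\sum_{g\in G}\mbox{tr}_{V}\,g\,q^{L(0)-c/24}$, expand each $g\neq e$ term via the twisted modular invariance of \cite{DLM4} in characters of irreducible $g$-twisted modules, and use the positivity of their conformal weights to see these terms are exponentially suppressed relative to $Z_{V}(iy)$ as $y\to0^{+}$. The one point you rightly flag, $S_{0,0}\neq0$ for the untwisted sector, is indeed needed and is available from the self-duality/CFT-type standing hypotheses of \cite{DJX} (via Theorem \ref{Verlinde formula}), so your argument is complete in the setting where the paper applies it.
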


\section{The vertex operator algebra $V_{L_2}^{A_4}$}
\def\theequation{3.\arabic{equation}}
\setcounter{equation}{0}

Now we first briefly review the construction of rank one lattice vertex
operator algebra from \cite{FLM}. Then we recall some related results
about $V_{L}^{+}$ and $V_{L_{2}}^{A_{4}}$ from \cite{A1,A2,AD,ADL,DN1,DN2,DN3,DJL,FLM,DJ4}.
In the last part of this section, we also give the realization of
all irreducible $V_{L_{2}}^{A_{4}}$-modules.

\subsection{Construction of the vertex operator algebra $V_{L_{2}}^{A_{4}}$}

Let $L=\mathbb{Z}\alpha$ be a positive definite even lattice of rank
one, i.e., $\left(\alpha,\alpha\right)=2k$ for some positive integer
$k$. Set $\mathfrak{h}=L\otimes_{\mathbb{Z}}\mathbb{C}$ and extend
$\left(\cdot,\cdot\right)$ to a $\mathbb{C}$-bilinear form on $\mathfrak{h}$.
Let $\mathbb{C}\left[\mathfrak{h}\right]$ be the group algebra of
$\mathfrak{h}$ with a basis $\left\{ e^{\lambda}|\lambda\in\mathfrak{h}\right\} $.

Let $\hat{\mathfrak{h}}=\mathbb{C}\left[t,t^{-1}\right]\otimes\mathfrak{h}\oplus\mathbb{C}K$
be the corresponding Heisenberg algebra such that
\[
\left[\alpha\left(m\right),\alpha\left(n\right)\right]=2km\delta_{m+n,0}K\ \mbox{and\ }\left[K,\hat{\mathfrak{h}}\right]=0
\]
for any $m,n\in\mathbb{Z}$, where $\alpha\left(m\right)=\alpha\otimes t^{m}$.
Then $\hat{\mathfrak{h}}_{\ge0}=\mathbb{C}\left[t\right]\otimes\mathfrak{h}\oplus\mathbb{C}K$
is a subalgebra of $\mathfrak{\hat{h}}$ and the group algebra $\mathbb{C}\left[\mathfrak{h}\right]$
becomes a $\hat{\mathfrak{h}}_{\ge0}$-module by the action $\alpha\left(m\right)\cdot e^{\lambda}=\left(\lambda,\alpha\right)\delta_{m,0}e^{\lambda}$
and $K\cdot e^{\lambda}=e^{\lambda}$ for any $\lambda\in\mathfrak{h}$
and $m\ge0$. We denote by
\[
M\left(1,\lambda\right)=U\left(\hat{\mathfrak{h}}\right)\otimes_{U\left(\hat{\mathfrak{h}}_{\ge0}\right)}\mathbb{C}e^{\lambda}
\]
the $\hat{\mathfrak{h}}$-module induced from $\hat{\mathfrak{h}}_{\ge0}$-module
$\mathbb{C}e^{\lambda}$. Set $M\left(1\right)=M\left(1,0\right)$.
Then there exists a linear map $Y:M\left(1\right)\to\mbox{End}M\left(1\right)\left[\left[z,z^{-1}\right]\right]$
such that $\left(M\left(1\right),Y,\mathbf{1},\omega\right)$ carries
a simple vertex operator algebra structure and $M\left(1,\lambda\right)$
becomes an irreducible $M\left(1\right)$-module for $\lambda\in\mathfrak{h}$
\cite{FLM}. Let $\mathbb{C}\left[L\right]$ be the group algebra
of $L$ with a basis $e^{\alpha}$ for $\alpha\in L$. The lattice
vertex operator algebra associated to $L$ is given by
\[
V_{L}=M\left(1\right)\otimes\mathbb{C}\left[L\right].
\]

The dual lattice $L^{\circ}$ of $L$ is
\[
L^{\circ}=\left\{ \lambda\in\mathfrak{h}|\left(\alpha,\lambda\right)\in\mathbb{Z}\right\} =\frac{1}{2k}L.
\]

Then $L^{\circ}=\cup_{i=-k+1}^{k}\left(L+\lambda_{i}\right)$ is the
coset decomposition with $\lambda_{i}=\frac{i}{2k}\alpha$. Set $V_{L+\lambda_{i}}=M\left(1\right)\otimes\mathbb{C}\left[L+\lambda_{i}\right]$.
Then $V_{L+\lambda_{i}}$ for $i=-k+1$, $\cdots$, $k$ are all the
inequivalent irreducible modules for $V_{L}$ \cite{B,FLM,D1}.

Let $\theta$ be a linear isomorphism of $V_{\mathfrak{h}}$ defined
by
\[
\theta\left(\alpha\left(-n_{1}\right)\cdots\alpha\left(-n_{k}\right)\otimes e^{\lambda}\right)=\left(-1\right)^{k}\alpha\left(-n_{1}\right)\cdots\alpha\left(-n_{k}\right)\otimes e^{-\lambda},
\]
 for $n\in\mathbb{Z}_{+}$ and $\lambda\in\mathfrak{h}$. Then $\theta$
induces automorphisms of $V_{L}$ and $M\left(1\right)$. For a $\theta$-invariant
subspace $W$ of $V_{\mathfrak{h}}=M\left(1\right)\otimes\mathbb{C}\left[\mathfrak{h}\right]$,
we denote the $\pm1$-eigenspaces of $W$ for $\theta$ by $W^{\pm}$.
Then $\left(V_{L}^{+},Y,\mathbf{1},\omega\right)$ and $\left(M\left(1\right)^{+},Y,\mathbf{1},\omega\right)$
are vertex operator algebras.

Now we recall the construction of $\theta$-twisted $V_{L}$-modules
\cite{FLM,D2}. Let $\hat{\mathfrak{h}}\left[-1\right]=\mathfrak{h}\otimes t^{1/2}\mathbb{C}\left[t,t^{-1}\right]\oplus\mathbb{C}K$
be a Lie algebra with the commutation relation
\[
\left[\alpha\otimes t^{m},\alpha\otimes t^{n}\right]=m\delta_{m+n,0}\left(\alpha,\alpha\right)K\ \mbox{and}\ \left[K,\hat{\mathfrak{h}}\left[-1\right]\right]=0\
\]
for $m,n\in1/2+\mathbb{Z}$. Then there is a one-dimensional module
for $\hat{\mathfrak{h}}\left[-1\right]_{+}=\mathfrak{h}\otimes t^{1/2}\mathbb{C}\left[t\right]\oplus\mathbb{C}K$,
which could be identified with $\mathbb{C}$, by the action
\[
\left(\alpha\otimes t^{m}\right)\cdot1=0\ \mbox{and}\ K\cdot1=1\ \mbox{for}\ m\in1/2+\mathbb{N}.
\]
 Set $M\left(1\right)\left(\theta\right)$ the induced $\hat{\mathfrak{h}}\left[-1\right]$-module:
\[
M\left(1\right)\left(\theta\right)=U\left(\hat{\mathfrak{h}}\right)\left[-1\right]\otimes_{U\left(\hat{\mathfrak{h}}\left[-1\right]_{+}\right)}\mathbb{C}.
\]

Let $\chi_{s}$ be a character of $L/2L$ such that $\chi_{s}\left(\alpha\right)=\left(-1\right)^{s}$
for $s=0,1$ and $T_{\chi_{s}}=\mathbb{C}$ the irreducible $L/2L$-module
with character $\chi_{s}$. Then $V_{L}^{T_{s}}=M\left(1\right)\left(\theta\right)\otimes T_{\chi_{s}}$
is an irreducible $\theta$-twisted $V_{L}$-module. We denote the
$\pm1$-eigenspaces of $V_{L}^{T_{s}}$ under $\theta$ by $\left(V_{L}^{T_{s}}\right)^{\pm}$.
Then we have the following result:

\begin{theorem} Any irreducible $V_{L}^{+}$-module is isomorphic
to one of the following modules:

\[
V_{L}^{\pm},V_{\lambda_{i}+L}\left(i\not=k\right),V_{\lambda_{k}+L}^{\pm},\left(V_{L}^{T_{s}}\right)^{\pm}.
\]

\end{theorem}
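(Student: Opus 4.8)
The plan is to exploit that $V_L^+$ is the fixed-point subalgebra $V_L^{\langle\theta\rangle}$ of the order-two automorphism $\theta$, and to build all irreducible modules from the two sectors attached to the group $\langle\theta\rangle\cong\mathbb{Z}/2\mathbb{Z}$: the untwisted sector, given by the irreducible $V_L$-modules $V_{L+\lambda_i}$ ($i=-k+1,\dots,k$), and the $\theta$-twisted sector, given by the irreducible $\theta$-twisted $V_L$-modules $V_L^{T_0},V_L^{T_1}$. First I would record the $\theta$-action on these. Since $\theta$ sends $V_{L+\lambda_i}$ to $V_{L-\lambda_i}=V_{L+\lambda_{-i}}$, the $\theta$-stable untwisted modules are precisely those with $2\lambda_i\in L$, namely $V_L$ (at $i=0$) and $V_{L+\lambda_k}$ (at $i=k$); all other modules fall into two-element orbits $\{V_{L+\lambda_i},V_{L+\lambda_{-i}}\}$. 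On the twisted side, because $\theta^2=1$ maps $\theta$-twisted modules to $\theta$-twisted modules and preserves the character $\chi_s$ of $L/2L$, each $V_L^{T_s}$ is $\theta$-stable.

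Next I would descend to $V_L^+$. For a $\theta$-stable irreducible (untwisted or twisted) $V_L$-module $M$, the automorphism lifts to an operator $\phi$ on $M$ with $\phi^2=1$ intertwining $Y_M(\theta\,\cdot\,,z)$ and $Y_M(\cdot,z)$, so $M$ carries a compatible action of $\langle\theta\rangle$; the two $\phi$-eigenspaces $M^\pm$ are then irreducible $V_L^+$-modules. Applying this to $V_L$, $V_{L+\lambda_k}$ and $V_L^{T_s}$ yields $V_L^\pm$, $V_{L+\lambda_k}^\pm$ and $(V_L^{T_s})^\pm$. For a two-element orbit the restriction of $V_{L+\lambda_i}$ to $V_L^+$ remains irreducible and $V_{L+\lambda_i}\cong V_{L+\lambda_{-i}}$ as $V_L^+$-modules, which gives the family $V_{L+\lambda_i}$ with $i\neq 0,k$ (matching the list for $i\neq k$, the case $i=0$ being absorbed into $V_L^\pm$). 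To justify that each $M^\pm$ is irreducible and that $M^+\not\cong M^-$, I would use the standard counting principle that an irreducible $\theta$-stable $V_L$-module splits into exactly $|\langle\theta\rangle|=2$ irreducible $V_L^+$-modules, while a $\theta$-unstable orbit contributes a single one; inequivalence across the whole list then follows by comparing conformal weights, lowest-weight spaces, and $\theta$-eigenvalue data.

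The substantive part is completeness: that the modules just constructed exhaust all irreducible $V_L^+$-modules. Here I would use that $V_L^+$ is rational and $C_2$-cofinite, together with an induction (Frobenius-reciprocity) functor from $V_L^+$-modules to $V_L$-modules and to $\theta$-twisted $V_L$-modules. Given an irreducible $V_L^+$-module $W$, one forms the induced module and shows, via the orbifold duality for the cyclic group $\langle\theta\rangle$, that $W$ embeds into the restriction of some irreducible untwisted or $\theta$-twisted $V_L$-module; comparison with the explicit list then forces $W$ to be one of the listed modules.

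I expect this completeness step to be the main obstacle, since one must rule out irreducible $V_L^+$-modules not detected from the $V_L$-side. At the needed level of generality this either invokes the quantum-Galois and modular-invariance machinery for orbifolds, or, as in the original treatment, a direct determination of the Zhu algebra $A(V_L^+)$ and its simple modules. I would first attempt the orbifold-duality route and fall back on the Zhu-algebra computation---using that $V_L^+\supset M(1)^+$, whose irreducible modules are already classified---if the duality hypotheses are not directly at hand.
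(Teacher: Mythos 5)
The first thing to note is that the paper itself offers no proof of this theorem: it is recalled verbatim from the cited literature (the rank-one case is due to Dong--Nagatomo \cite{DN3}, the general case to Abe--Dong \cite{AD}), so there is no ``paper's proof'' to match your argument against; what matters is whether your outline would actually close. Its first half is fine: the $\theta$-orbit analysis on the untwisted modules $V_{L+\lambda_i}$, the $\theta$-stability of the twisted modules $V_L^{T_s}$, the splitting of each $\theta$-stable irreducible into two inequivalent irreducible $V_L^+$-modules, and the irreducibility of $V_{L+\lambda_i}$ ($i\neq 0,k$) as a $V_L^+$-module all follow from quantum Galois theory \cite{DM1,DY} and its twisted analogue, and the inequivalence bookkeeping you describe is routine. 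This produces the list; it is the easy direction.

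The genuine gap is exactly where you locate it, but your proposed repair does not work within the framework the paper (and its references) sit in. The orbifold-duality route --- ``every irreducible $V_L^+$-module embeds in the restriction of an irreducible untwisted or $\theta$-twisted $V_L$-module'' --- presupposes rationality (or at least strong regularity statements) for $V_L^+$ together with a general exhaustion theorem for fixed-point subalgebras. Neither is available non-circularly here: rationality of $V_L^+$ (\cite{A3}, \cite{DJL}) was proved \emph{after}, and using, the classification of its irreducible modules, and the general theorem that all irreducibles of $V^{G}$ occur in twisted $V$-modules is not among the tools of any reference the paper invokes. So as stated, your primary strategy is circular relative to the literature, and your fallback --- the direct determination of $A(V_L^+)$-modules via the classification of irreducible $M(1)^+$-modules, which is in fact how \cite{DN3} and \cite{AD} proceed --- is named but not carried out. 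Since the completeness step is the entire mathematical content of the theorem, the proposal as written is a plan with the decisive step missing, not a proof.
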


Let $L_{2}=\mathbb{Z}\alpha$ be the rank one positive-definite even
lattice such that $\left(\alpha,\alpha\right)=2$ and $V_{L_{2}}$
the associated simple rational vertex operator algebra. Then $\left(V_{L_{2}}\right)_{1}\cong sl_{2}\left(\mathbb{C}\right)$
and $\left(V_{L_{2}}\right)_{1}$ has an orthonormal basis:

\[
x^{1}=\frac{1}{\sqrt{2}}\alpha\left(-1\right)\mathbf{1},\ x^{2}=\frac{1}{\sqrt{2}}\left(e^{\alpha}+e^{-\alpha}\right),\ x^{3}=\frac{i}{\sqrt{2}}\left(e^{\alpha}-e^{-\alpha}\right).
\]
For $x\in \left(V_{L_{2}}\right)_{1}$ we also use $x(n)$ for $x_n$ for $n\in\Z.$
Let $\sigma,\tau_{i}\in Aut\left(V_{L_{2}}\right)$, $i=1,2,3$ be
such that

\[
\sigma\left(x^{1},x^{2},x^{3}\right)=\left(x^{1},x^{2},x^{3}\right)\left[\begin{array}{ccc}
0 & 1 & 0\\
0 & 0 & -1\\
-1 & 0 & 0
\end{array}\right].
\]

\[
\tau_{1}\left(x^{1},x^{2},x^{3}\right)=\left(x^{1},x^{2},x^{3}\right)\left[\begin{array}{ccc}
1\\
 & -1\\
 &  & -1
\end{array}\right],
\]

\[
\tau_{2}\left(x^{1},x^{2},x^{3}\right)=\left(x^{1},x^{2},x^{3}\right)\left[\begin{array}{ccc}
-1\\
 & 1\\
 &  & -1
\end{array}\right],
\]

\[
\tau_{3}\left(x^{1},x^{2},x^{3}\right)=\left(x^{1},x^{2},x^{3}\right)\left[\begin{array}{ccc}
-1\\
 & -1\\
 &  & 1
\end{array}\right].
\]

Then $\sigma$ and $\tau_{i},i=1,2,3,$ generate a finite subgroup
of $Aut\left(V_{L_{2}}\right)$ isomorphic to the alternating group
$A_{4}$. We simply denote this group by $A_{4}$. It is easy to check
that the subgroup $K$ generated by $\tau_{i},i=1,2,3$, is a normal
subgroup of $A_{4}$ of order $4$. Let $\beta=2\alpha$.

The following result can be found in \cite{DG}.
\begin{lemma} We have $V_{L_{2}}^{K}= V_{\mathbb{Z}\beta}^{+}$
 and $V_{L_{2}}^{A_{4}}=\left(V_{\mathbb{Z}\beta}^{+}\right)^{\left\langle \sigma\right\rangle }$.
\end{lemma}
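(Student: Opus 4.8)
The plan is to verify both equalities at the level of the underlying group actions, since $V_{L_2}^K$ and $V_{L_2}^{A_4}$ are by definition the fixed-point subalgebras of $V_{L_2}$ under $K$ and $A_4$ respectively. First I would analyze the action of the Klein four-group $K=\langle\tau_1,\tau_2,\tau_3\rangle$. The key observation is that each $\tau_i$ acts diagonally on the orthonormal basis $x^1,x^2,x^3$ of $(V_{L_2})_1\cong sl_2(\mathbb{C})$ with two eigenvalues $-1$ and one eigenvalue $+1$. Since $\alpha(-1)\mathbf{1}=\sqrt{2}\,x^1$ and the vectors $e^{\pm\alpha}$ are linear combinations of $x^2,x^3$, I can read off how $\tau_i$ acts on $e^{n\alpha}$ and on Heisenberg generators $\alpha(-n)$. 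A direct computation shows that $\tau_1\tau_2\tau_3=\mathrm{id}$ on these generators, so $K$ is Klein four, and that the common fixed-point set picks out exactly those monomials $\alpha(-n_1)\cdots\alpha(-n_r)\otimes e^{m\alpha}$ with $m$ even. The strategy here is to identify $\beta=2\alpha$ and recognize that the sublattice $\mathbb{Z}\beta=2\mathbb{Z}\alpha$ together with the parity constraint forced by the $\tau_i$ yields precisely $V_{\mathbb{Z}\beta}^{+}$, the $\theta$-invariant part of the rank-one lattice VOA on $\mathbb{Z}\beta$ (where $(\beta,\beta)=8$).

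For the first equality $V_{L_2}^K=V_{\mathbb{Z}\beta}^{+}$, the key steps are: (i) show $V_{L_2}^K\subseteq V_{\mathbb{Z}\beta}$ by proving that $\tau_1$ (say) kills all odd-$\alpha$-power components, so only the even sublattice $\mathbb{Z}\beta$ survives; (ii) identify the surviving Heisenberg-module structure, noting $\theta$ restricted to $V_{\mathbb{Z}\beta}$ coincides with the composite of the remaining $\tau_i$ restrictions, so that invariance under all of $K$ is equivalent to invariance under $\theta$ on $V_{\mathbb{Z}\beta}$; and (iii) conclude $V_{L_2}^K=(V_{\mathbb{Z}\beta})^\theta=V_{\mathbb{Z}\beta}^{+}$. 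For the second equality, I would use that $A_4=K\rtimes\langle\sigma\rangle$ with $\sigma$ of order $3$, so taking $A_4$-invariants amounts to first taking $K$-invariants and then $\langle\sigma\rangle$-invariants; that is, $V_{L_2}^{A_4}=(V_{L_2}^K)^{\langle\sigma\rangle}=(V_{\mathbb{Z}\beta}^{+})^{\langle\sigma\rangle}$, once one checks that $\sigma$ normalizes $K$ (already asserted in the text, since $K\trianglelefteq A_4$) and hence preserves $V_{L_2}^K$.

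The main obstacle I expect is bookkeeping in step (i)–(ii): carefully tracking how the $\tau_i$ act on the full lattice VOA, not just on the degree-one piece, and verifying that their common fixed-point locus is exactly the $\theta$-eigenspace $V_{\mathbb{Z}\beta}^{+}$ rather than some larger or smaller subspace. In particular one must confirm that the two "$-1$" eigenvalues in each $\tau_i$ conspire so that the product of signs on any monomial $\alpha(-n_1)\cdots\alpha(-n_r)\otimes e^{m\alpha}$ reduces to the $\theta$-sign $(-1)^r$ on the even sublattice, which is the content of the $V^{+}$ condition. The cleanest route is probably to exhibit the isomorphism $V_{\mathbb{Z}\beta}\cong V_{L_2}\cap(\text{even part})$ explicitly and then match the two group actions on generators, after which the equality of fixed-point subalgebras follows formally. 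Since the structural decomposition $A_4\cong K\rtimes\langle\sigma\rangle$ is standard and the VOA identifications reduce to the stated lattice-theoretic facts, the argument is essentially a verification; indeed the result is quoted from \cite{DG}, so I would expect the proof to be brief and rely on these identifications rather than on any deep new computation.
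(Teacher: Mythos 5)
The paper offers no proof of this lemma at all --- it is quoted from \cite{DG} --- so there is nothing internal to compare against; your plan amounts to supplying the verification that the citation hides, and in outline it is sound. The two facts that make it work are the ones you isolate: $\tau_1$ fixes $x^1$, hence fixes $M(1)$ pointwise, and multiplies $e^{m\alpha}$ by $(-1)^m$, so $V_{L_2}^{\langle\tau_1\rangle}=M(1)\otimes\mathbb{C}[2\mathbb{Z}\alpha]=V_{\mathbb{Z}\beta}$; and $\tau_2$ equals $\theta$ as an automorphism of $V_{L_2}$ (it negates $\alpha(-1)\mathbf{1}$ and exchanges $e^{\alpha}\leftrightarrow e^{-\alpha}$, and an automorphism of $V_{L_2}$ is determined by its restriction to the weight one subspace, which generates $V_{L_2}$), so the further invariants are $V_{\mathbb{Z}\beta}^{\theta}=V_{\mathbb{Z}\beta}^{+}$; finally $A_4=\langle K,\sigma\rangle$ with $K$ normal gives $V_{L_2}^{A_4}=\bigl(V_{L_2}^{K}\bigr)^{\langle\sigma\rangle}$. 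Two formulations in your sketch need repair, though neither is fatal. First, the common fixed-point set of $K$ is \emph{not} ``the monomials $\alpha(-n_1)\cdots\alpha(-n_r)\otimes e^{m\alpha}$ with $m$ even''; that span is only $V_{L_2}^{\langle\tau_1\rangle}=V_{\mathbb{Z}\beta}$, and the $K$-invariants are the strictly smaller subspace of $\theta$-symmetric vectors inside it (indeed $\theta$ is not diagonal on such monomials, since it sends $e^{m\beta}$ to $e^{-m\beta}$). Second, your step (ii), that ``$\theta$ restricted to $V_{\mathbb{Z}\beta}$ coincides with the composite of the remaining $\tau_i$ restrictions,'' is false as written: $\tau_2\tau_3=\tau_1$, which restricts to the \emph{identity} on $V_{\mathbb{Z}\beta}$. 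What is true, and what your argument needs, is that each of $\tau_2$ and $\tau_3$ separately restricts to $\theta$ on $V_{\mathbb{Z}\beta}$ (they differ by $\tau_1$, which acts trivially there), so that $K$-invariance inside $V_{\mathbb{Z}\beta}$ is equivalent to $\theta$-invariance. With these corrections your write-up would be a complete, self-contained proof of a statement the paper only cites.
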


By \cite{DM1}, there is decomposition
\begin{equation}\label{Decomp1}
V_{\mathbb{Z}\beta}^{+}=\left(V_{\mathbb{Z}\beta}^{+}\right)^{0}\oplus
\left(V_{\mathbb{Z}\beta}^{+}\right)^{1}\oplus\left(V_{\mathbb{Z}\beta}^{+}\right)^{2}
\end{equation}
where $\left(V_{\mathbb{Z}\beta}^{+}\right)^{0}=\left(V_{\mathbb{Z}\beta}^{+}\right)^{\left\langle \sigma\right\rangle }$
is a simple vertex operator algebra and $\left(V_{\mathbb{Z}\beta}^{+}\right)^{i}$
is an irreducible $\left(V_{\mathbb{Z}\beta}^{+}\right)^{0}$-module,
$i=1,2$. Similarly, as a $\left(V_{\mathbb{Z}\beta}^{+}\right)^{\left\langle \sigma\right\rangle }$-module,
we have
\begin{equation}\label{Decomp2}
V_{\mathbb{Z}+\frac{1}{4}\beta}=V_{\mathbb{Z}+\frac{1}{4}\beta}^{0}
\oplus V_{\mathbb{Z}+\frac{1}{4}\beta}^{1}\oplus V_{\mathbb{Z}+\frac{1}{4}\beta}^{2}
\end{equation}
 such that $V_{\mathbb{Z}+\frac{1}{4}\beta}^{i}$ is
irreducible $\left(V_{\mathbb{Z}\beta}^{+}\right)^{\left\langle \sigma\right\rangle }$-module,
$i=0,1,2$  \cite{DY}.  The details of the realization of $V_{\mathbb{Z}+\frac{1}{4}\beta}^{i}$
will be provided in the next subsection.

Let $W_{\sigma^{i},1}$, $W_{\sigma^{i},2}$ be the  two irreducible
$\sigma^{i}$-twisted modules of $V_{\mathbb{Z}\beta}^{+}$, $i=1,2$
\cite{DJ4}. Then each $W_{\sigma^{i},j}$ is a direct sum of irreducible $\left(V_{\mathbb{Z}\beta}^{+}\right)^{\left\langle \sigma\right\rangle }$-submodules
$W_{\sigma^{i},j}^{k}$ for $k=0,1,2$ . There are
exactly 21 irreducible modules of $\left(V_{\mathbb{Z}\beta}^{+}\right)^{\left\langle \sigma\right\rangle }$
which is listed as following \cite{DJ4}:
\begin{equation}\label{irreducible}
\left\{ \left(V_{\mathbb{Z}\beta}^{+}\right)^{m},V_{\mathbb{Z}\beta}^{-},V_{\mathbb{Z}\beta+\frac{1}{8}\beta},V_{\mathbb{Z}\beta+\frac{3}{8}\beta},W_{\sigma^{i},j}^{k},V_{\mathbb{Z}\beta+\frac{1}{4}\beta}^{n}|m,n,k=0,1,2;i,j=1,2.\right\} \end{equation}
Here $\mbox{\ensuremath{\left(V_{\mathbb{Z}\beta}^{+}\right)^{m}}}$
is the eigenspace of $\sigma$ with eigenvalue $e^{\frac{2\pi im}{3}}$.

\subsection{Realizations of the Irreducible $V_{L_{2}}^{A_{4}}$-modules}

Let $\sigma,$ $\tau_{i}$ and $x^{i},i=1,2,3$ be as before. Set
\[
h=\frac{1}{3\sqrt{6}}\left(x^{1}+x^{2}-x^{3}\right),
\]
\[
y^{1}=\frac{1}{\sqrt{3}}\left(x^{1}+\frac{-1+\sqrt{3}i}{2}x^{2}+\frac{1+\sqrt{3}i}{2}x^{3}\right),
\]
\[
y^{2}=\frac{1}{\sqrt{3}}\left(x^{1}+\frac{-1-\sqrt{3}i}{2}x^{2}+\frac{1-\sqrt{3}i}{2}x^{3}\right).
\]
Then
\[
L\left(n\right)h=\delta_{n,0}h,\ \ h\left(n\right)h=\frac{1}{18}\delta_{n,1}\mathbf{1},\ n\in\mathbb{Z},
\]
\[
h\left(0\right)y^{1}=\frac{1}{3}y^{1},\ h\left(0\right)y^{2}=-\frac{1}{3}y^{2},\ y^{1}\left(0\right)y^{2}=6h.
\]
(see \cite{DJ4}).
It follows that $h\left(0\right)$ acts semisimply on $V_{L_{2}}$
with rational eigenvalues. So $e^{2\pi ih\left(0\right)}$ is an automorphism
of $V_{L_{2}}$ of finite order \cite{DG,L2}. Since

\[
e^{2\pi ih\left(0\right)}h=h,\ \ e^{2\pi ih\left(0\right)}y^{1}=\frac{-1+\sqrt{3}i}{2}y^{1},\ \ e^{2\pi ih\left(0\right)}y^{2}=\frac{-1-\sqrt{3}i}{2}y^{2},
\]
it is easy to see that
\begin{equation}
e^{2\pi ih\left(0\right)}=\sigma.\label{sigma and h}
\end{equation}

The action of the group generated by $\sigma,$ $\tau_{i}$, $i=1,2,3$
on $V_{L_{2}}$ is isomorphic to alternating group $A_{4}.$ Actually,
$\sigma=e^{2\pi ih(0)}$ and $\tau_{i}=e^{\pi ix_{i}(0)}$ ($i=1,2,3$)
also act on $V_{\mathbb{Z}\frac{1}{2}\alpha}=M(1)\otimes\mathbb{C}[\frac{1}{2}\mathbb{Z}\alpha]$,
where the action of the group $\langle\sigma,\tau_{i}|i=1,2,3\rangle$
on $V_{\Z\frac{1}{2}\alpha}$ is isomorphic to $SL(2,3),$ the special
linear group of degree 2 over a field of three elements. Thus by the
quantum Galois theory \cite{DM1},
\begin{equation}
V_{\mathbb{Z}\frac{\alpha}{2}}\cong\bigoplus_{\chi}V_{\chi}\otimes W_{\chi}\label{eq:quantum galois}
\end{equation}
where $\chi$ runs over all irreducible characters of $SL(2,3).$ The
irreducible representations of the group $SL(2,3)$ are well known:
three 1-dimensional, one 3-dimensional and three 2-dimensional irreducible
representations. We denote them by $U_{1}^{k},$ $U_{3}$ and $U_{2}^{k},$
$k=0,1,2$ respectively, where the subindex $i$ is the dimension
of the module and the upper indices distinguish the irreducible modules
of the same dimension. The irreducible modules with the same dimension
can be distinguished by the eigenvalues of the action of $\sigma$:
$\sigma|_{U_{1}^{k}}=e^{\frac{2\pi ik}{3}}$, $\sigma$ has eigenvalues
$ $$e^{\frac{2\pi i}{6}+\frac{2\pi ik}{3}}$ and $ $$e^{-\frac{2\pi i}{6}+\frac{2\pi ik}{3}}$
on $U_{2}^{k}$ and the eigenvalues of $\sigma$ on $U_{3}$ are the
three cube roots of unity.

The set of scalar matrices of $SL(2,3)$ is a normal subgroup isomorphic to $\mathbb{Z}_{2}$
and $A_{4}\cong SL(2,3)/\mathbb{Z}_{2}.$
The group $A_{4}$ has three $1$-dimensional and one $3$-dimensional
irreducible modules. Thus $ $$V_{L_{2}}$ and $V_{L_{2}+\frac{1}{2}\alpha}$
can be decomposed as $V_{L_{2}}^{A_{4}}$-modules \cite{DY}:

\[
V_{L_{2}}=(V_{\mathbb{Z}\beta}^{+})^{0}\otimes U_{1}^{0}\oplus(V_{\mathbb{Z}\beta}^{+})^{1}\otimes U_{1}^{1}\oplus(V_{\mathbb{Z}\beta}^{+})^{2}\otimes U_{1}^{2}\oplus V_{\mathbb{Z}\beta}^{-}\otimes U_{3},
\]

\[
V_{L_{2}+\frac{1}{2}\alpha}=V_{\mathbb{Z\beta}+\frac{1}{4}\beta}^{0}\otimes U_{2}^{0}\oplus V_{\mathbb{Z\beta}+\frac{1}{4}\beta}^{1}\otimes U_{2}^{1}\oplus V_{\mathbb{Z\beta}+\frac{1}{4}\beta}^{2}\otimes U_{2}^{2}.
\]
Some of those $V_{L_{2}}^{A_{4}}$-modules listed in (\ref{irreducible})
can be realized differently by considering the orbifold vertex operator
algebra $V_{L_{2}}^{\left\langle \sigma\right\rangle }$.

\begin{proposition} \label{inner automorphism}
Let $g$ be an automorphism of $V_{L_{2}}$ of order $T\text{\ensuremath{\neq}}1.$
Then there exists some vector $u\in(V_{L_{2}})_{1},$ such that $g=e^{2\pi iu(0)}.$\end{proposition}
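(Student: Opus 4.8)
The plan is to exploit the fact that $V_{L_{2}}$ is generated by its weight one space $(V_{L_{2}})_{1}\cong sl_{2}(\C)$: since $(\alpha,\alpha)=2$, the lattice $L_{2}$ is the $A_{1}$ root lattice and $V_{L_{2}}$ is the simple affine vertex operator algebra of type $sl_{2}$ at level one, generated by $e^{\pm\alpha}$ and $\alpha(-1)\mathbf{1}$. Consequently any automorphism of $V_{L_{2}}$ is completely determined by its restriction to $(V_{L_{2}})_{1}$. Thus it suffices to produce $u\in(V_{L_{2}})_{1}$ for which $e^{2\pi iu(0)}$ and $g$ agree on $(V_{L_{2}})_{1}$; generation then forces $g=e^{2\pi iu(0)}$ on all of $V_{L_{2}}$. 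Note that on $(V_{L_{2}})_{1}$ the operator $u(0)$ acts precisely as $\mathrm{ad}(u)$ for the Lie bracket $[a,b]=a(0)b$, so the target is to match $\phi:=g|_{(V_{L_{2}})_{1}}$ with $\exp(2\pi i\,\mathrm{ad}(u))$.

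First I would record that $\phi$ is a Lie algebra automorphism of $sl_{2}(\C)$ with $\phi^{T}=\mathrm{id}$. Because the Dynkin diagram $A_{1}$ has no symmetries, $\mathrm{Aut}(sl_{2}(\C))=\mathrm{Inn}(sl_{2}(\C))\cong PGL_{2}(\C)$, so $\phi=\mathrm{Ad}(\bar g)$ for some $\bar g\in GL_{2}(\C)$. Here the finiteness of $g$ enters: from $\phi^{T}=\mathrm{id}$ we get $\mathrm{Ad}(\bar g^{\,T})=\mathrm{id}$, so $\bar g^{\,T}$ is a scalar matrix; hence $\bar g$ satisfies a polynomial $x^{T}-c$ with $c\neq0$ and distinct roots, and $\bar g$ is semisimple. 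Therefore $\bar g=\exp(X)$ for a semisimple $X\in gl_{2}(\C)$, which we may take traceless (adjusting by the center does not change $\mathrm{Ad}$), so $X\in sl_{2}(\C)$ and $\phi=\exp(\mathrm{ad}(X))$. Setting $u=\frac{1}{2\pi i}X\in(V_{L_{2}})_{1}$ gives a semisimple element with $\exp(2\pi i\,\mathrm{ad}(u))=\phi$.

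It remains to check that $e^{2\pi iu(0)}$ is a genuine automorphism of $V_{L_{2}}$. The commutator formula $[u(0),a(n)]=(u(0)a)(n)$ shows that $u(0)$ is a derivation of every product $a_{n}b$, so $\exp(2\pi iu(0))$ is an algebra endomorphism; it fixes $\mathbf{1}$ since $u(0)\mathbf{1}=0$, and it fixes $\omega$ because every weight one vector of $V_{L_{2}}$ is primary ($L(1)u=0$), whence a short skew symmetry computation gives $u(0)\omega=-L(-1)u+L(-1)L(0)u=0$. Semisimplicity of $u$ guarantees that $u(0)$ acts semisimply and locally finitely on each graded piece, so $e^{2\pi iu(0)}$ is well defined and invertible, hence an automorphism. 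By construction it restricts to $\exp(2\pi i\,\mathrm{ad}(u))=\phi=g|_{(V_{L_{2}})_{1}}$, and generation of $V_{L_{2}}$ by $(V_{L_{2}})_{1}$ then yields $g=e^{2\pi iu(0)}$.

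The substantive points, and the places where I expect to spend the most care, are the generation statement (that $V_{L_{2}}$ is generated by $(V_{L_{2}})_{1}$, pinning down an automorphism by its action there) and the structure theory of $\mathrm{Aut}(sl_{2}(\C))$ together with the passage from a finite order element of $PGL_{2}(\C)$ to a semisimple logarithm $X\in sl_{2}(\C)$. The verification that $e^{2\pi iu(0)}$ fixes $\omega$ and is an automorphism is routine once primality of $(V_{L_{2}})_{1}$ is noted.
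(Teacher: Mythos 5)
Your proof is correct, and it reaches the semisimple exponent by a genuinely different route than the paper. Both arguments restrict $g$ to $\phi=g|_{(V_{L_{2}})_{1}}$, a finite-order automorphism of $(V_{L_{2}})_{1}\cong sl_{2}(\C)$, and both must produce a semisimple $u$ with $\phi=\exp(2\pi i\,\mathrm{ad}(u))$; the difference is how. The paper works intrinsically in the three-dimensional adjoint representation: using the invariant bilinear form and $[sl_{2},sl_{2}]=sl_{2}$, it runs an eigenvalue case analysis to show that the fixed-point space of $\phi$ is exactly one-dimensional, applies the Jordan decomposition to conclude that a fixed vector $a$ is semisimple, and then reads off the eigenvalues $e^{\pm 2\pi i r/T}$ of $g$ on the root vectors $e^{\pm\gamma a}$ attached to $a$, arriving at the explicit exponent $u=\frac{r}{2T}\gamma a$. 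You instead pass to matrices: $\mathrm{Aut}(sl_{2}(\C))=\mathrm{Inn}(sl_{2}(\C))\cong PGL_{2}(\C)$, so $\phi=\mathrm{Ad}(\bar g)$; finiteness of the order makes $\bar g^{\,T}$ scalar, hence $\bar g$ diagonalizable, and a traceless semisimple logarithm $X$ yields $u=\frac{1}{2\pi i}X$. Your route trades the paper's case analysis for standard $2\times 2$ linear algebra, which is shorter; the paper's construction has the advantage of exhibiting the $g$-fixed Cartan line and the exponent concretely in terms of an $sl_{2}$-triple. You are also more explicit about two points the paper compresses: that generation of $V_{L_{2}}$ by its weight-one space pins down an automorphism by its restriction (the paper subsumes this in $\mathrm{Aut}(V_{L_{2}})\cong \mathrm{Aut}(sl_{2}(\C))$), and that $e^{2\pi iu(0)}$ is indeed an automorphism fixing $\omega$ (the paper quotes this from \cite{DG,L2}). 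The only place you lean on an unproved standard fact is the assertion that semisimplicity of $u$ inside $sl_{2}(\C)$ forces $u(0)$ to act semisimply on all of $V_{L_{2}}$ (e.g. because $u$ can be conjugated into $\C\alpha(-1)\mathbf{1}$ by an automorphism of $V_{L_{2}}$); the paper relies on exactly the same kind of fact for $h(0)$, so this is a fair citation rather than a gap.
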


\proof The vertex operator algebra $V_{L_{2}}$ is isomorphic to
the affine vertex operator algebra associated to the simple Lie algebra
${sl}_{2}(\C)$ of level 1. We know that $Aut(V_{L_{2}})\cong Aut({sl}_{2}(\C)).$
The restriction of $g$ on $\left(V_{L_{2}}\right)_{1}$ is also an
isomorphism of order $T,$ which has an eigenspaces decomposition.

\emph{Claim: There exists a unique (up to a scalar) nonzero vector
$a\in\left(V_{L_{2}}\right)_{1}$ such that $ga=a.$ }

Note that $(a,b)=-a(1)b$ defines a nondegenerate symmetric invariant bilinear form on $\left(V_{L_{2}}\right)_{1},$ since $\left(V_{L_{2}}\right)_{1}$ is isomorphic to
${sl}_{2}(\C)$ which has a unique nondegenerate symmetric invariant bilinear form
up to a constant. This implies that $(ga,gb)=(a,b)$ for all $a,b\in \left(V_{L_{2}}\right)_{1}$
and $\left(V_{L_{2}}\right)_{1}^\rho$ and   $\left(V_{L_{2}}\right)_{1}^{\bar \rho}$
have the same dimensions where $\left(V_{L_{2}}\right)_{1}^\rho$ is the eigenspace of $g$ on
 $\left(V_{L_{2}}\right)_{1}$ with eigenvalue $\rho$ which is a root of unity.

First,  $g\big|_{\left(V_{L_{2}}\right)_{1}}$ is not a constant. Since $\left(V_{L_{2}}\right)_{1}$ is a simple Lie algebra we see that $\left(V_{L_{2}}\right)_{1}$ is spanned by $a(0)b$ for $a,b\in \left(V_{L_{2}}\right)_{1}.$ If $g$ acts on $\left(V_{L_{2}}\right)_{1}$ as a constant $\rho.$ Then $g$ also acts as $\rho^2.$ This forces $\rho=1,$ a contradiction.

If there are exactly  two eigenvalues $\rho_1,\rho_2$  of $g$ on $(V_{L_{2}})_{1},$ we
deduce that $\rho_{1}=\pm1,\,\rho_{2}=\mp1.$ Otherwise  $\rho_{1}=\overline{\rho_{2}}\neq\pm1$ and
 $\left(V_{L_{2}}\right)_{1}$ has even dimension, a contradiction.  Without loss of generality,
assume $\rho_{1}=1,$ $\rho_{2}=-1.$ If the eigenspace of $g$ with
eigenvalue 1 is two dimensional, then the eigenspace of $g$ on $[\left(V_{L_{2}}\right)_{1},\left(V_{L_{2}}\right)_{1}]=\left(V_{L_{2}}\right)_{1}$ with eigenvalue $1$ is one dimensional, a contradiction.

The only case left is that $\rho_{1},\,\rho_{2},\,\rho_{3}$ are three
distinct eigenvalues of $g$ on $(V_{L_{2}})_{1}.$ Assume that $\rho_{1}=\overline{\rho_{2}}.$
Using the fact that $[\left(V_{L_{2}}\right)_{1},\left(V_{L_{2}}\right)_{1}]=\left(V_{L_{2}}\right)_{1}$ we see that $\rho_3=1$ and each eigenspace is one dimensional. The claim is proved.

Let $a\in (V_{L_{2}})_{1}$ be an eigenvector of $g$ with eigenvalue $1.$ Consider the Jordan decomposition of  $a=a_{s}+a_{n},$ where $a_{s}$ and $a_{n}$ are the
semisimple part and nilpotent part of $a.$ It is easy to see that
$a$ is not nilpotent due to the eigenspace decomposition, and $a_{s}$
is also a fixed point of $g$ since $a$ is a fixed point of $g$.
Since the fixed point space is 1 dimensional, $a=a_{s}$, which acts
semisimply on $(V_{L_{2}})_{1}.$ The structure of ${sl}_{2}(\C)$
tells us that there exists $\gamma a$, for some $\gamma\in\mathbb{C}^{*},$
such that $(\gamma a,\gamma a)=2,$ $[\gamma a,e^{\gamma a}]=2e^{\gamma a},$
$ $$[\gamma a,e^{-\gamma a}]=-2e^{-\gamma a},$ $g(e^{\gamma a})=e^{2\pi i\frac{r}{T}}e^{\gamma a}$
and $g(e^{-\gamma a})=e^{-2\pi i\frac{r}{T}}e^{-\gamma a}.$ It is
clear that $g=e^{\pi i\frac{r}{T}\gamma a(0)},$ i.e. $u=\frac{r}{2T}\gamma a.$
$ $\endproof

\begin{remark} The group $SO(3)$ is the connected compact subgroup
of $Aut(V_{L_{2}}),$ whose discrete subgroup are the cyclic group
$Z_{n},$ the dihedral group $D_{n},$ $A_{4},$ $S_{4}$ and $A_{5}.$
The above proposition indicates that the orbifold vertex operator
algebra $V_{L_{2}}^{Z_{n}}\cong V_{\mathbb{Z}n\alpha}.$ One could
also get $V_{L_{2}}^{D_{n}}\cong V_{\mathbb{Z}n\alpha}^{+}.$
\end{remark}

\begin{remark}\label{A4 12dim} It is worthy to point out that for
any $g\in Aut(V_{L_{2}})$ of finite order $T$, the $g$-twisted
module category is equivalent to the category of ordinary modules.
Thus $V_{L_{2}}$ is $g$-rational for any such $g.$ Following from
Theorem \ref{fix point order}, q$\dim_{V_{L_{2}}^{A_4}} V_{L_{2}}=o(A_{4})=12.$
\end{remark}

In our case, we have $V_{L_{2}}^{\left\langle \sigma\right\rangle }\cong V_{\mathbb{Z\gamma}}\cong V_{\mathbb{Z}3\alpha},$
$(\gamma,\gamma)=18,$ i.e $\gamma=18h.$ One immediately gets that
$V_{L_{2}}\cong V_{\mathbb{Z}\gamma}\oplus V_{\mathbb{Z}\gamma+\frac{1}{3}\gamma}\oplus V_{\mathbb{Z}\gamma+\frac{2}{3}\gamma}$
and $V_{L_{2}+\frac{1}{2}\alpha}\cong V_{\mathbb{Z}\gamma+\frac{1}{6}\gamma}\oplus V_{\mathbb{Z}\gamma+\frac{1}{2}\gamma}\oplus V_{\mathbb{Z}-\frac{1}{6}\gamma}$
due to the eigenvalues of the $\sigma$ action on $V_{\mathbb{Z}\frac{1}{2}\alpha}$.
The eigenvalues of $\sigma$ on $V_{L_{2}}$ and $V_{L_{2}+\frac{1}{2}\alpha}$
(see equation (\ref{eq:quantum galois})) give us the following proposition.

\begin{proposition} \label{non-twisted decomposition}As $V_{L_{2}}^{A_{4}}$-modules,
we have the following identification:
\[
V_{\mathbb{Z}\gamma}\cong(V_{\mathbb{Z}\beta}^{+})^{0}+V_{\mathbb{Z}\beta}^{-},
\]
\[
V_{\mathbb{Z}\gamma+\frac{1}{3}\gamma}\cong(V_{\mathbb{Z}\beta}^{+})^{1}+V_{\mathbb{Z}\beta}^{-},
\]
\[
V_{\mathbb{Z}\gamma+\frac{2}{3}\gamma}\cong(V_{\mathbb{Z}\beta}^{+})^{2}+V_{\mathbb{Z}\beta}^{-},
\]
\[
V_{\mathbb{Z}\text{\ensuremath{\gamma}}+\frac{1}{6}\gamma}=V_{\mathbb{Z\beta}+\frac{1}{4}\beta}^{0}+V_{\mathbb{Z\beta}+\frac{1}{4}\beta}^{1},
\]
\[
V_{\mathbb{Z}\gamma+\frac{1}{2}\gamma}=V_{\mathbb{Z\beta}+\frac{1}{4}\beta}^{1}+V_{\mathbb{Z\beta}+\frac{1}{4}\beta}^{2},
\]
\[
V_{\mathbb{Z}\gamma-\frac{1}{6}\gamma}=V_{\mathbb{Z\beta}+\frac{1}{4}\beta}^{0}+V_{\mathbb{Z\beta}+\frac{1}{4}\beta}^{2}.
\]
\end{proposition}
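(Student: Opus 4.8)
The plan is to realize both lists as the decomposition of $V_{L_{2}}$ and of $V_{L_{2}+\frac{1}{2}\alpha}$ into $\sigma$-eigenspaces, computed in two ways. The structural input is that $V_{L_{2}}^{A_{4}}=(V_{\mathbb{Z}\beta}^{+})^{0}$ is fixed pointwise by $\sigma$, so $\sigma Y(v,z)=Y(v,z)\sigma$ for all $v\in V_{L_{2}}^{A_{4}}$; hence each $\sigma$-eigenspace of $V_{L_{2}}$ (and of $V_{L_{2}+\frac{1}{2}\alpha}$) is a $V_{L_{2}}^{A_{4}}$-submodule. Since $\sigma$ has order $3$ and $V_{L_{2}}^{\langle\sigma\rangle}=V_{\mathbb{Z}\gamma}$, the $\langle\sigma\rangle$-decomposition $V_{L_{2}}\cong V_{\mathbb{Z}\gamma}\oplus V_{\mathbb{Z}\gamma+\frac{1}{3}\gamma}\oplus V_{\mathbb{Z}\gamma+\frac{2}{3}\gamma}$ is exactly this $\sigma$-eigenspace decomposition, and likewise for $V_{L_{2}+\frac{1}{2}\alpha}$; on the other hand the $A_{4}$-decompositions recalled above, split further by $\sigma$, also produce it. It thus suffices to compute $\sigma$-eigenvalues on both sides and match.

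On the lattice side I use $\sigma=e^{2\pi ih(0)}$ and $\gamma=18h$. A generator $e^{\frac{k}{3}\gamma}$ of $V_{\mathbb{Z}\gamma+\frac{k}{3}\gamma}$ satisfies $h(0)e^{\frac{k}{3}\gamma}=(\frac{k}{3}\gamma,\tfrac{1}{18}\gamma)\,e^{\frac{k}{3}\gamma}=\frac{k}{3}e^{\frac{k}{3}\gamma}$, and $h(0)$ acts on the whole module by a scalar in $\frac{k}{3}+\mathbb{Z}$, so $\sigma$ acts by the scalar $e^{2\pi ik/3}$; similarly $\sigma$ acts on $V_{\mathbb{Z}\gamma+\frac{j}{6}\gamma}$ by $e^{2\pi ij/6}$. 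Hence $V_{\mathbb{Z}\gamma},V_{\mathbb{Z}\gamma+\frac{1}{3}\gamma},V_{\mathbb{Z}\gamma+\frac{2}{3}\gamma}$ are the $\sigma$-eigenspaces of $V_{L_{2}}$ for $1,e^{2\pi i/3},e^{4\pi i/3}$, and $V_{\mathbb{Z}\gamma+\frac{1}{6}\gamma},V_{\mathbb{Z}\gamma+\frac{1}{2}\gamma},V_{\mathbb{Z}\gamma-\frac{1}{6}\gamma}$ are those of $V_{L_{2}+\frac{1}{2}\alpha}$ for $e^{\pi i/3},-1,e^{-\pi i/3}$.

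On the $A_{4}$ side I use that, by quantum Galois theory \cite{DM1}, $A_{4}$ (in particular $\sigma$) acts on each isotypic summand $V_{\chi}\otimes U_{\chi}$ only through the representation factor $U_{\chi}$; so the $\lambda$-eigenspace of $\sigma$ in such a summand is $V_{\chi}\otimes(U_{\chi})_{\lambda}$, contributing one copy of the $V_{L_{2}}^{A_{4}}$-module $V_{\chi}$ when $\dim(U_{\chi})_{\lambda}=1$. With $\sigma|_{U_{1}^{k}}=e^{2\pi ik/3}$ and $\sigma$ acting on $U_{3}$ by the three distinct cube roots of unity, the $e^{2\pi im/3}$-eigenspace of $V_{L_{2}}$ collects $(V_{\mathbb{Z}\beta}^{+})^{m}$ from $U_{1}^{m}$ and one copy of $V_{\mathbb{Z}\beta}^{-}$ from $U_{3}$, i.e. $(V_{\mathbb{Z}\beta}^{+})^{m}+V_{\mathbb{Z}\beta}^{-}$; matching with the lattice side gives the first three identities. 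For $V_{L_{2}+\frac{1}{2}\alpha}$ the eigenvalues of $\sigma$ on $U_{2}^{k}$ are $e^{\pm\pi i/3+2\pi ik/3}$, so with $\zeta=e^{\pi i/3}$ they are $\{\zeta,\zeta^{5}\},\{\zeta^{3},\zeta\},\{\zeta^{5},\zeta^{3}\}$ for $k=0,1,2$; thus $e^{\pi i/3},\,-1,\,e^{-\pi i/3}$ are shared by the superscript pairs $\{0,1\},\{1,2\},\{0,2\}$ respectively, which upon matching eigenvalues yields the last three identities.

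The one place needing care is the eigenvalue bookkeeping for the two-dimensional spaces $U_{2}^{k}$: one must confirm that each of $e^{\pi i/3},-1,e^{-\pi i/3}$ is shared by exactly the stated pair of superscripts, so that the factors $V_{\mathbb{Z}\beta+\frac{1}{4}\beta}^{k}$ pair up as in the proposition. The first three identities are the easier instance of the same argument, since there the relevant multiplicity spaces are already one-dimensional.
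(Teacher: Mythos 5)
Your proposal is correct and follows essentially the same route as the paper: the paper also obtains the identifications by matching the $\sigma$-eigenspace decompositions of $V_{L_{2}}$ and $V_{L_{2}+\frac{1}{2}\alpha}$ computed two ways, once through the lattice realization $V_{L_{2}}^{\langle\sigma\rangle}\cong V_{\mathbb{Z}\gamma}$ with $\gamma=18h$ and $\sigma=e^{2\pi ih(0)}$, and once through the quantum Galois decomposition with the stated $\sigma$-eigenvalues on $U_{1}^{k}$, $U_{3}$ and $U_{2}^{k}$. Your write-up merely makes explicit the $h(0)$-eigenvalue computations and the pairing bookkeeping for the two-dimensional $U_{2}^{k}$ that the paper leaves implicit.
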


Now we briefly review the irreducible $V_{L_{2}}^{A_{4}}$-modules
which are constructed from the $\sigma^{i}$-twisted $V_{L_{2}}$-modules.
Let $W^{1}\cong V_{L_{2}},\, W^{2}\cong V_{L_{2}+\frac{1}{2}\alpha}.$
Set
\[
w^{1}=e^{\frac{\alpha}{2}}+\frac{(\sqrt{3}-1)(1+i)}{2}e^{-\frac{\alpha}{2}},\, w^{2}=\frac{1}{\sqrt{2}}\left[(\sqrt{3}-1)e^{\frac{\alpha}{2}}-(1+i)e^{-\frac{\alpha}{2}}\right].
\]
For any $u\in(V_{L_{2}})_{1}$ such that $g=e^{2\pi iu(0)}$ is an
automorphism of $V_{L_{2}}$ of finite order, $ $ define
\[
\Delta\left(u,z\right)=z^{h\left(0\right)}\mbox{exp}\left(\sum_{k=1}^{\infty}\frac{u\left(k\right)}{-k}\left(-z\right)^{-k}\right).
\]
It is proved in \cite{L1} that $(W^{i,T_{1}},Y_{g}(\cdot,z))=(W^{i},Y(\Delta\left(u,z\right)\cdot,z))$
are irreducible $g$-twisted modules of $V_{L_{2}},$ $i=1,2.$ The
$\sigma^{i}$-twisted $V_{L_{2}}$-modules were constructed in \cite{DJ4}
following this idea, where the twisted vertex operator was also determined.

For the $\sigma$-twisted $V_{L_{2}}$-modules,

\[
\Delta(h,z)L(-2){\bf 1}=L(-2){\bf 1}+z^{-1}h(-1)\mathbf{1}+\frac{1}{36}z^{-2}\mathbf{1,}
\]

\[
Y_{\sigma}(h,z)=Y(h+\frac{1}{18}z^{-1},z),
\]

\[
Y_{\sigma}(y^{1},z)=z^{\frac{1}{3}}Y(y^{1},z),
\]

\[
Y_{\sigma}(y^{2},z)=z^{-\frac{1}{3}}Y(y^{2},z).
\]

For the $\sigma^{2}$-twisted $V_{L_{2}}$-modules,

\[
\Delta(-h,z)L(-2){\bf 1}=L(-2){\bf 1}-z^{-1}h(-1)\mathbf{1}+\frac{1}{36}z^{-2}\mathbf{1,}
\]

\[
Y_{\sigma^{2}}(h,z)=Y(-h+\frac{1}{18}z^{-1},z),
\]

\[
Y_{\sigma^{2}}(y^{1},z)=z^{-\frac{1}{3}}Y(y^{1},z),
\]

\[
Y_{\sigma^{2}}(y^{2},z)=z^{\frac{1}{3}}Y(y^{2},z).
\]

The irreducible $V_{L_{2}}^{A_{4}}$-modules $W_{\sigma^{i},j}^{k}$
are realized in the $\sigma^{i}$-twisted $V_{L_{2}}$-module \cite{DJ4}.
	The following table gives the conformal weight of each $W_{\sigma^{i},j}^{k}$ \cite{DJ4}:

\begin{tabular}{|c|c|c|c|c|c|c|}
\hline
 & $W_{\sigma,1}^{0}$$ $ & $W_{\sigma,1}^{1}$ & $W_{\sigma,1}^{2}$ & $W_{\sigma,2}^{0}$$ $ & $W_{\sigma,2}^{1}$ & $W_{\sigma,2}^{2}$\tabularnewline
\hline
$L(0)$ & $\text{\ensuremath{\frac{1}{36}}}$ & $\frac{}{}$$\frac{25}{36}$ & $\frac{}{}$$\frac{}{}$$\frac{}{}$$\frac{}{}$$\frac{49}{36}$ & $\frac{}{}$$\frac{}{}$$\frac{1}{9}$ & $\frac{}{}$$\frac{}{}$$\frac{4}{9}$ & $\frac{}{}$$\frac{}{}$$\frac{16}{9}$\tabularnewline
\hline
lowest weight vector & $\mathbf{1}$ & $y^{2}$ & $y^{1}$ & $w^{2}$ & $w^{1}$ & $y^{2}(-2)w^{2}$\tabularnewline
\hline
\hline
 & $W_{\sigma^{2},1}^{0}$$ $ & $W_{\sigma^{2},1}^{1}$ & $W_{\sigma^{2},1}^{2}$ & $W_{\sigma^{2},2}^{0}$$ $ & $W_{\sigma^{2},2}^{1}$ & $W_{\sigma^{2},2}^{2}$\tabularnewline
\hline
$L(0)$ & $\text{\ensuremath{\frac{1}{36}}}$ & $\frac{}{}$$\frac{25}{36}$ & $\frac{}{}$$\frac{}{}$$\frac{}{}$$\frac{}{}$$\frac{49}{36}$ & $\frac{}{}$$\frac{}{}$$\frac{1}{9}$ & $\frac{}{}$$\frac{}{}$$\frac{4}{9}$ & $\frac{}{}$$\frac{}{}$$\frac{16}{9}$\tabularnewline
\hline
lowest weight vector & $\mathbf{1}$ & $y^{1}$ & $y^{2}$ & $w^{1}$ & $w^{2}$ & $y^{1}(-2)w^{1}$\tabularnewline
\hline
\end{tabular}

\begin{proposition}\label{twisted modules identification}

As $V_{L_{2}}^{A_{4}}$-modules, we have the following identification:

\[
V_{\mathbb{Z\gamma}+\frac{1}{18}\gamma}\cong W_{\sigma,1}^{0},\ V_{\mathbb{Z}\gamma-\frac{5}{18}\gamma}\cong W_{\sigma,1}^{1},\ V_{\mathbb{Z\gamma}+\frac{7}{18}\gamma}\cong W_{\sigma,1}^{2},
\]
\[
V_{\mathbb{Z\gamma}-\frac{1}{9}\gamma}\cong W_{\sigma,2}^{0},\ V_{\mathbb{Z\gamma}+\frac{2}{9}\gamma}\cong W_{\sigma,2}^{1},\ V_{\mathbb{Z\gamma}-\frac{4}{9}\gamma}\cong W_{\sigma,2}^{2},
\]
\[
V_{\mathbb{Z\gamma}-\frac{1}{18}\gamma}\cong W_{\sigma^{2},1}^{0},\ V_{\mathbb{Z\gamma}+\frac{5}{18}\gamma}\cong W_{\sigma^{2},1}^{1},\ V_{\mathbb{Z}\gamma-\frac{7}{18}\gamma}\cong W_{\sigma^{2},1}^{2},
\]
\[
V_{\mathbb{Z\gamma}\text{+}\frac{1}{9}\gamma}\cong W_{\sigma^{2},2}^{0},\ V_{\mathbb{Z\gamma}-\frac{2}{9}\gamma}\cong W_{\sigma^{2},2}^{1},\ V_{\mathbb{Z\gamma}+\frac{4}{9}\gamma}\cong W_{\sigma^{2},2}^{2}.
\]

\end{proposition}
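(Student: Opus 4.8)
The plan is to use that $\sigma = e^{2\pi i h(0)}$ is inner, so that by the construction of \cite{L1} the irreducible $\sigma^i$-twisted $V_{L_2}$-modules are realized on the untwisted spaces $W^1 \cong V_{L_2}$ and $W^2 \cong V_{L_2+\frac12\alpha}$ through the operator $\Delta(\pm h, z)$, with twisted vertex operators $Y_{\sigma^i}$ as recorded above. Since $V_{\mathbb{Z}\gamma} = V_{L_2}^{\langle\sigma\rangle}$ is fixed by $\sigma$, each $Y_{\sigma^i}$ restricts on $V_{\mathbb{Z}\gamma}$ to an ordinary vertex operator, so each $W^l$ carries an ordinary $V_{\mathbb{Z}\gamma}$-module structure. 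The key point is that $V_{\mathbb{Z}\gamma}$ is a rank-one lattice vertex operator algebra with $(\gamma,\gamma) = 18$, whose $18$ irreducible modules are precisely the $V_{\mathbb{Z}\gamma+\frac{m}{18}\gamma}$ with $m$ ranging over a transversal of $18\mathbb{Z}$; such a module is determined uniquely by its conformal weight $\frac{m^2}{36}$ together with the $\gamma(0)$-eigenvalue $m$ of its lowest weight vector. So the whole proposition reduces to computing, for each listed lowest weight vector, these two invariants inside the twisted module.

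The conformal weights are already tabulated, coming from the twisted Virasoro element (e.g. $\Delta(h,z)L(-2)\mathbf 1 = L(-2)\mathbf 1 + z^{-1}h(-1)\mathbf 1 + \frac{1}{36}z^{-2}\mathbf 1$). For the charges I would read off the $z^{-1}$-coefficient of $Y_{\sigma^i}(h,z)$: in the $\sigma$-twisted sector the twisted zero mode of $\gamma = 18h$ is $\gamma(0)+1$, while in the $\sigma^2$-twisted sector it is $\gamma(0)-1$. It then remains to feed in the untwisted $h(0)$-eigenvalues: those of $\mathbf 1, y^1, y^2$ are given ($0, \frac13, -\frac13$), those of $w^1, w^2$ are $\frac16, -\frac16$ (the lowest weight vectors of the summands $V_{\mathbb{Z}\gamma\pm\frac16\gamma}$ in the decomposition of $V_{L_2+\frac12\alpha}$ recalled above), and for the weight-$\frac{16}{9}$ vectors one uses $[h(0), y^j(-2)] = \pm\frac13 y^j(-2)$ to get $h(0)\big(y^1(-2)w^1\big) = \frac12 y^1(-2)w^1$ and $h(0)\big(y^2(-2)w^2\big) = -\frac12 y^2(-2)w^2$. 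Multiplying by $18$ and applying the shift $\pm1$ yields the charge $m$ of every $W_{\sigma^i,j}^k$, and matching $(m,\tfrac{m^2}{36})$ against the irreducible lattice modules produces all the stated isomorphisms as $V_{\mathbb{Z}\gamma}$-modules.

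Finally I would pass from $V_{\mathbb{Z}\gamma}$-modules to $V_{L_2}^{A_4}$-modules. The $\sigma^i$-twisted module on $W^l$ restricts to $V_{\mathbb{Z}\gamma}$ as a sum of the three distinct irreducibles $V_{\mathbb{Z}\gamma+\lambda}$ just identified, and to the smaller algebra $V_{L_2}^{A_4} \subset V_{\mathbb{Z}\gamma}$ as the sum of the three $W_{\sigma^i,j}^k$; since the $V_{L_2}^{A_4}$-decomposition refines the $V_{\mathbb{Z}\gamma}$-decomposition and both have exactly three summands, each $V_{\mathbb{Z}\gamma+\lambda}$ must stay irreducible over $V_{L_2}^{A_4}$ and hence equal one of the $W_{\sigma^i,j}^k$, the correspondence being fixed by conformal weight. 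I expect the genuine difficulty to be the sign bookkeeping in the charges: obtaining the shift $-1$ rather than $+1$ in the $\sigma^2$-twisted sector, and pinning down $h(0)w^1, h(0)w^2$ and the charges of $y^j(-2)w^j$, because it is exactly the sign of $m$ that distinguishes $V_{\mathbb{Z}\gamma+\frac{m}{18}\gamma}$ from $V_{\mathbb{Z}\gamma-\frac{m}{18}\gamma}$.
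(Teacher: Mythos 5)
Your proof is correct, but it resolves the crucial step by a different mechanism than the paper, so it is worth comparing the two. The paper proves only the first isomorphism explicitly (declaring the rest similar): it observes that the conformal weight $\frac{1}{36}$ leaves exactly two candidates $V_{\mathbb{Z}\gamma\pm\frac{1}{18}\gamma}$ for $W_{\sigma,1}^{0}$, and then pins down the sign by letting $y^{1}\in(V_{\mathbb{Z}\beta}^{+})^{1}\subset V_{\mathbb{Z}\gamma+\frac{1}{3}\gamma}$ act through the twisted operator $Y_{\sigma}(y^{1},z)=z^{1/3}Y(y^{1},z)$: the image of $\mathbf{1}$ lands in $W_{\sigma,1}^{2}$ of weight $\frac{49}{36}$, and comparing with the lattice fusion rules $V_{\mathbb{Z}\gamma+\frac{1}{3}\gamma}\boxtimes V_{\mathbb{Z}\gamma+\frac{1}{18}\gamma}=V_{\mathbb{Z}\gamma+\frac{7}{18}\gamma}$ versus $V_{\mathbb{Z}\gamma+\frac{1}{3}\gamma}\boxtimes V_{\mathbb{Z}\gamma-\frac{1}{18}\gamma}=V_{\mathbb{Z}\gamma+\frac{5}{18}\gamma}$ forces the $+$ sign. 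You instead read off the charge, i.e.\ the twisted $\gamma(0)$-eigenvalue of each lowest weight vector, from the $z^{-1}$-shift in $Y_{\sigma^{i}}(h,z)$; since the charge alone determines the coset $V_{\mathbb{Z}\gamma+\frac{m}{18}\gamma}$, this settles all twelve isomorphisms uniformly, avoids fusion rules altogether, and your closing count (three $V_{\mathbb{Z}\gamma}$-summands against three $W_{\sigma^{i},j}^{k}$) makes explicit a descent step from $V_{\mathbb{Z}\gamma}$-modules to $V_{L_{2}}^{A_{4}}$-modules that the paper leaves implicit. The trade-off is exactly the sign bookkeeping you flagged: your shift $\gamma(0)-1$ in the $\sigma^{2}$-sector is the right one, since $\Delta(-h,z)h=h-\frac{1}{18}z^{-1}\mathbf{1}$, and it is the only choice reproducing the stated cosets; be aware that the paper's displayed formula $Y_{\sigma^{2}}(h,z)=Y(-h+\frac{1}{18}z^{-1},z)$ would literally give $-\gamma(0)+1$ and hence wrong cosets, so you must recompute $\Delta(-h,z)h$ rather than quote that line. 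Similarly, the eigenvalues $h(0)w^{1}=\frac{1}{6}w^{1}$ and $h(0)w^{2}=-\frac{1}{6}w^{2}$ should be justified independently of the statement being proved, either by direct computation with the explicit vectors $w^{1},w^{2}$, or by backing them out of the tabulated twisted weights through $L_{\sigma}(0)=L(0)+h(0)+\frac{1}{36}$; once that is done, your argument is complete.
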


\proof We only prove the first isomorphism. We know that the $V_{L_{2}}^{A_{4}}$-module
in $V_{L_{2}}$ generated by $y^{1}$ is isomorphic to $\left(V_{\mathbb{Z}\beta}^{+}\right)^{1}\subset V_{\mathbb{Z}\gamma+\frac{1}{3}\gamma}.$
Since the conformal weight of $W{}_{\sigma,1}^{0}$ is $\frac{1}{36},$
$W{}_{\sigma,1}^{0}\cong V_{\mathbb{Z}\gamma+\frac{1}{18}\gamma}$
or $ $$V_{\mathbb{Z}\gamma-\frac{1}{18}\gamma}.$ The twisted vertex
operator $Y_{\sigma}(y^{1},z)$ would help us to determine which is
the right isomorphism. We have
\[
Y_{\sigma}(y^{1},z)\mathbf{1}=z^{\frac{1}{3}}Y(y^{1},z)\mathbf{1\subset}W_{\sigma,1}^{2},
\]
 where the conformal weight of $W_{\sigma,1}^{2}=\frac{49}{36}$.
On the other hand, the fusion rules among irreducible $V_{\mathbb{Z}\gamma}$-modules
are as following:
\[
V_{\mathbb{Z}\gamma+\frac{1}{3}\gamma}\boxtimes V_{\mathbb{Z}\gamma+\frac{1}{18}\gamma}=V_{\mathbb{Z}\gamma+\frac{7}{18}\gamma},\ V_{\mathbb{Z}\gamma+\frac{1}{3}\gamma}\boxtimes V_{\mathbb{Z}\gamma-\frac{1}{18}\gamma}=V_{\mathbb{Z}\gamma+\frac{5}{18}\gamma}.
\]
 We already mentioned that $y^{1}\in\left(V_{\mathbb{Z}\beta}^{+}\right)^{1}\subset V_{\mathbb{Z}\gamma+\frac{1}{3}\gamma}.$
Comparing the conformal weights of $W_{\sigma,1}^{2},$ $V_{\mathbb{Z}\gamma+\frac{7}{18}\gamma}$
and $V_{\mathbb{Z}\gamma+\frac{5}{18}\gamma}$ tells us that $W_{\sigma,1}^{0}\cong V_{\mathbb{Z}\gamma+\frac{1}{18}\gamma}.$

Other isomorphisms could be proved using a similar argument. \endproof

\section{Quantum dimensions of irreducible $V_{L_2}^{A_4}$-modules}
\def\theequation{4.\arabic{equation}}
\setcounter{equation}{0}

In this section, we determine the quantum dimensions of all irreducible $V_{L_{2}}^{A_{4}}$-modules.
We first investigate properties of quantum dimensions of irreducible
twisted $V_{L_{2}}^{A_{4}}$-modules.

Let $V$ be a vertex operator algebra and Let $g=e^{2\pi ih\left(0\right)}$
be an automorphism of $V$ of finite order where $h\in V_1$ such that $h(0)$ acts on $V$ semisimply.
Let $M$ be an irreducible $V$-module. By \cite{DLM1,L1} we see
that $\left(M^{g},Y_{g}\left(\cdot,z\right)\right)=\left(M,Y_{M}\left(\Delta\left(h,z\right)\cdot,z\right)\right)$
is a $g$-twisted $V$-module, where $\Delta\left(h,z\right)=z^{h\left(0\right)}\mbox{exp}\left(\sum_{k=1}^{\infty}\frac{h\left(k\right)}{-k}\left(-z\right)^{-k}\right)$.

\begin{proposition} \label{twisted quantum} Let $V$ be a rational,
$C_{2}$-cofinite vertex operator algebra with central charge $c$
and $M^{0},\cdots,M^{d}$ be the inequivalent irreducible $V$-modules
with $M^{0}\cong V$ and the corresponding conformal weights $\lambda_{i}>0$
for $0<i\le d$. Let $g$ be as defined.
Then $\mbox{qdim}M^{i}=\mbox{qdim}\left(M^{i}\right)^{g}$,
$0\le i\le d$.\end{proposition}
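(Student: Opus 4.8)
The plan is to realize the graded character of the twisted module $(M^i)^g$ as the specialization $Z_i(h,0,\tau)$ of the two-variable trace function, and then compare the $y\to 0$ asymptotics of $Z_{(M^i)^g}$ and $Z_V$ through the $S$-transformation. First I would record how the degree operator changes under the $\Delta$-construction. By the defining properties of $\Delta(h,z)$ used in \cite{L1,DLM1}, the conformal vector of $(M^i)^g$ is $Y_g(\omega,z)=Y(\Delta(h,z)\omega,z)$ with $\Delta(h,z)\omega=\omega+z^{-1}h(-1)\mathbf 1+\frac{1}{2}(h,h)z^{-2}\mathbf 1$, so its degree operator is $L_g(0)=L(0)+h(0)+\frac{1}{2}(h,h)$. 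Since $(M^i)^g$ equals $M^i$ as a vector space and $h(0)$ commutes with $L(0)$, this yields
\[
Z_{(M^i)^g}(\tau)=\mathrm{tr}_{M^i}q^{L_g(0)-c/24}=\mathrm{tr}_{M^i}q^{L(0)+h(0)+(h,h)/2-c/24}=Z_i(h,0,\tau),
\]
which is exactly $Z_i(u,v,\tau)$ evaluated at $u=h$, $v=0$.

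Next I would invoke the $S$-transformation. A single vector $h$ spans a one-dimensional abelian subalgebra of $V_1$ with $h(0)$ semisimple, so the modular transformation theorem applies, and writing $iy=-1/\tau$ (hence $\tau=i/y$) it gives
\[
Z_{(M^i)^g}(iy)=Z_i(h,0,iy)=\sum_{j=0}^d S_{i,j}Z_j(0,h,i/y),\qquad Z_V(iy)=\sum_{j=0}^d S_{0,j}Z_j(0,0,i/y).
\]
Here $Z_j(0,h,i/y)=\mathrm{tr}_{M^j}e^{2\pi i h(0)}q^{L(0)-c/24}$ and $Z_j(0,0,i/y)=\mathrm{tr}_{M^j}q^{L(0)-c/24}$ with $q=e^{-2\pi/y}\to 0$ as $y\to 0^+$. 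Because $\lambda_0=0$ and $\lambda_j>0$ for $j>0$, in each sum the $j=0$ term dominates, its ratio to any other term growing like $q^{-\lambda_j}\to\infty$. The leading coefficient of the $j=0$ term is $\mathrm{tr}_{V_0}e^{2\pi i h(0)}=1$ in the first sum (since $V_0=\mathbb C\mathbf 1$ is one-dimensional and $h(0)\mathbf 1=0$) and $\dim V_0=1$ in the second. Hence $Z_{(M^i)^g}(iy)\sim S_{i,0}q^{-c/24}$ and $Z_V(iy)\sim S_{0,0}q^{-c/24}$, so
\[
\mathrm{qdim}\,(M^i)^g=\lim_{y\to 0}\frac{Z_{(M^i)^g}(iy)}{Z_V(iy)}=\frac{S_{i,0}}{S_{0,0}}.
\]

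Finally, Remark~\ref{quan dim Si0/S00} gives $\mathrm{qdim}\,M^i=S_{i,0}/S_{0,0}$, so the two quantum dimensions coincide; note $S_{0,0}\neq 0$ and $S_{i,0}=S_{0,0}\,\mathrm{qdim}\,M^i\neq 0$ by Proposition~\ref{quantum-product}(1), so the dominant terms are genuinely nonzero. The main obstacle is the asymptotic step: one must justify that the leading behavior of each finite sum as $y\to 0^+$ is governed solely by the vacuum module $M^0=V$ and that $e^{2\pi i h(0)}$ contributes the factor $1$ on the one-dimensional top level $V_0$. This rests on $h(0)$ preserving the $L(0)$-grading, on the CFT-type hypothesis $V_0=\mathbb C\mathbf 1$, and on the strict positivity of the conformal weights $\lambda_j$ for $j>0$.
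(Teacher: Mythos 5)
Your proof is correct and follows essentially the same route as the paper: identify $Z_{(M^i)^g}(\tau)=Z_i(h,0,\tau)$, apply the $S$-transformation, use positivity of the conformal weights $\lambda_j$ for $j>0$ together with $V_0=\mathbb{C}\mathbf{1}$ to extract the limit $S_{i,0}/S_{0,0}$, and conclude via Remark \ref{quan dim Si0/S00}. If anything, you are slightly more careful than the paper: you derive the twisted grading operator from $\Delta(h,z)\omega$ explicitly, and your denominator correctly transforms $Z_V$ into $\sum_j S_{0,j}Z_j(0,0,\tau)$ (the paper writes $Z_j(0,h,\tau)$ there, an immaterial slip since only the $j=0$ term survives the limit).
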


\begin{proof}

The $q$-character of $\left(M^{i}\right)^{g}$ are given by
\[
\mbox{ch}_{q}\left(M^{i}\right)^{g}=\mbox{tr}_{M^{i}}q^{L\left(0\right)+h\left(0\right)+\left(h,h\right)/2-c/24}=Z_{M^{i}}\left(h,0,\tau\right).
\]
 Thus the quantum dimension of $\left(M^{i}\right)^{g}$ can
be computed:
\begin{eqnarray*}
\mbox{qdim}\left(M^{i}\right)^{g} & = & \lim_{y\to0}\frac{Z_{i}\left(h,0,iy\right)}{Z_{V}\left(iy\right)}\\
 & = & \lim_{\tau\to i\infty}\frac{Z_{i}\left(h,0,-\frac{1}{\tau}\right)}{Z_{V}\left(-\frac{1}{\tau}\right)}\\
 & = & \lim_{\tau\to i\infty}\frac{\sum_{j}S_{i,j}Z_{j}\left(0,h,\tau\right)}{\sum_{j}S_{0,j}Z_{j}\left(0,h,\tau\right)}\\
 & = & \lim_{q\to0}\frac{\sum_{j}S_{i,j}\mbox{tr}_{M^{j}}e^{2\pi ih\left(0\right)}q^{L\left(0\right)-c/24}}{\sum_{j}S_{0,j}\mbox{tr}_{M^{j}}e^{2\pi ih\left(0\right)}q^{L\left(0\right)-c/24}}\\
 & = & S_{i,0}/S_{0,0}
\end{eqnarray*}
where the last equation follows from the conformal weight $\lambda_{i}>0$
for $0<i\le d$. Remark \ref{quan dim Si0/S00} asserts that
\[
\mbox{qdim}\left(M^{i}\right)^{g}=\mbox{qdim}M^{i}.
\]
  \end{proof}

Let $M$ be an irreducible $\left(V_{\mathbb{Z}\beta}^{+}\right)^{\left\langle \sigma\right\rangle }$-module.
For simplicity, from now on we denote the quantum dimension of $M$
over $\left(V_{\mathbb{Z}\beta}^{+}\right)^{\left\langle \sigma\right\rangle }$
by $\mbox{qdim} M_{\mathbb{}}$ instead of $\mbox{qdim}_{\left(V_{\mathbb{Z}\beta}^{+}\right)^{\left\langle \sigma\right\rangle }}M$.

% ~~~~~~~~~~~~~~~ quantum dimension of twisted module ~~~~~~~~~~~~~~~~~~~

\begin{theorem} \label{quantum dimensions}The quantum dimensions
for all irreducible $\left(V_{\mathbb{Z}\beta}^{+}\right)^{\left\langle \sigma\right\rangle }$-modules
are given by the following tables 1-4.

\begin{center}
\begin{tabular}{|c|c|c|c|c|c|c|}
\hline
 & $\left(V_{\mathbb{Z}\beta}^{+}\right)^{0}$ & $\left(V_{\mathbb{Z}\beta}^{+}\right)^{1}$ & $\left(V_{\mathbb{Z}\beta}^{+}\right)^{2}$  & $V_{\mathbb{Z}\beta}^{-}$  & $V_{\mathbb{Z}\beta+\frac{1}{8}\beta}$ & $V_{\mathbb{Z}\beta+\frac{3}{8}\beta}$\tabularnewline
\hline
$\text{\ensuremath{\omega}}$ & 0 & 4 & 4 & 1 & $\frac{1}{16}$ & $\frac{9}{16}$\tabularnewline
\hline
$\mbox{qdim}$ & $1$ & 1 & 1 & 3 & 6 & 6\tabularnewline
\hline
\end{tabular}
\par\end{center}

\begin{center}
\begin{tabular}{|c|c|c|c|c|c|c|}
\hline
 & $W_{\sigma,1}^{0}$ & $W_{\sigma,1}^{1}$ & $W_{\sigma,1}^{2}$ & $W_{\sigma,2}^{0}$ & $W_{\sigma,2}^{1}$ & $W_{\sigma,2}^{2}$\tabularnewline
\hline
$\omega$ & $\frac{1}{36}$ & $\frac{25}{36}$ & $\frac{49}{36}$ & $\frac{1}{9}$ & $\frac{4}{9}$ & $\frac{16}{9}$\tabularnewline
\hline
$\mbox{qdim}$ & 4 & 4 & 4 & 4 & 4 & 4\tabularnewline
\hline
\end{tabular}
\par\end{center}

\begin{center}
\begin{tabular}{|c|c|c|c|c|c|c|}
\hline
 & $W_{\sigma^{2},1}^{0}$ & $W_{\sigma^{2},1}^{1}$ & $W_{\sigma^{2},1}^{2}$ & $W_{\sigma^{2},2}^{0}$ & $W_{\sigma^{2},2}^{1}$ & $W_{\sigma^{2},2}^{2}$\tabularnewline
\hline
$\omega$ & $\frac{1}{36}$ & $\frac{25}{36}$ & $\frac{49}{36}$ & $\frac{1}{9}$ & $\frac{4}{9}$ & $\frac{16}{9}$\tabularnewline
\hline
$\mbox{qdim}$ & 4 & 4 & 4 & 4 & 4 & 4\tabularnewline
\hline
\end{tabular}
\par\end{center}

\begin{center}
\begin{tabular}{|c|c|c|c|}
\hline
 & $V_{\mathbb{Z}\beta+\frac{1}{4}\beta}^{0}$ & $V_{\mathbb{Z}\beta+\frac{1}{4}\beta}^{1}$ & $V_{\mathbb{Z}\beta+\frac{1}{4}\beta}^{2}$\tabularnewline
\hline
$\omega$ & $\frac{1}{4}$ & $\frac{9}{4}$ & $\frac{9}{4}$\tabularnewline
\hline
$\mbox{qdim}$ & 2 & 2 & 2\tabularnewline
\hline
\end{tabular}
\par\end{center}

\end{theorem}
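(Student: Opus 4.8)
The plan is to combine two facts: quantum dimensions multiply along a tower of vertex operator subalgebras, and all irreducible modules of a lattice vertex operator algebra are simple currents. For the first, suppose $V\subseteq U$ share the same Virasoro element, both are rational, $C_2$-cofinite, self-dual of CFT type, and $U=\bigoplus_i U^i$ is a finite sum of irreducible $V$-modules. Then for any $U$-module $M$ the central-charge factors cancel in the defining ratios, so
\[
\mbox{qdim}_V M=\lim_{y\to0}\frac{Z_M(iy)}{Z_V(iy)}=\mbox{qdim}_U M\cdot\mbox{qdim}_V U .
\]
Writing $V=V_{L_2}^{A_4}=(V_{\mathbb{Z}\beta}^{+})^{\langle\sigma\rangle}$, I would first pin down the two ``index'' values. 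Applying Theorem \ref{fix point order} to $V_{\mathbb{Z}\beta}^{+}$ with $G=\langle\sigma\rangle$ of order $3$ (its hypotheses holding because the irreducible $\sigma^i$-twisted $V_{\mathbb{Z}\beta}^{+}$-modules $W_{\sigma^i,j}^k$ all have positive conformal weight) gives $\mbox{qdim}_V V_{\mathbb{Z}\beta}^{+}=3$, and hence $\mbox{qdim}_V V_{\mathbb{Z}\beta}=\mbox{qdim}_V V_{\mathbb{Z}\beta}^{+}\cdot 2=6$. For the second, I use the chain $V\subseteq V_{\mathbb{Z}\gamma}=V_{L_2}^{\langle\sigma\rangle}\subseteq V_{L_2}$: Theorem \ref{fix point order} gives $\mbox{qdim}_{V_{\mathbb{Z}\gamma}}V_{L_2}=3$, while Remark \ref{A4 12dim} gives $\mbox{qdim}_V V_{L_2}=12$, so multiplicativity forces $\mbox{qdim}_V V_{\mathbb{Z}\gamma}=4$.

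With these in hand the four tables follow by decomposing each module. For the three eigenspaces, decomposition (\ref{Decomp1}) together with Proposition \ref{quantum-product}(1) gives $\sum_{m}\mbox{qdim}(V_{\mathbb{Z}\beta}^{+})^m=\mbox{qdim}_V V_{\mathbb{Z}\beta}^{+}=3$ with each summand $\ge1$, forcing each to equal $1$ (consistent with each being a simple current). Then $V_{\mathbb{Z}\gamma}\cong(V_{\mathbb{Z}\beta}^{+})^0\oplus V_{\mathbb{Z}\beta}^{-}$ from Proposition \ref{non-twisted decomposition} yields $4=1+\mbox{qdim}V_{\mathbb{Z}\beta}^{-}$, so $\mbox{qdim}V_{\mathbb{Z}\beta}^{-}=3$. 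Every irreducible module of the lattice algebras $V_{\mathbb{Z}\gamma}$ and $V_{\mathbb{Z}\beta}$ is a simple current, hence of quantum dimension $1$ over the lattice algebra by Proposition \ref{quantum-product}(3). Therefore the twisted modules $W_{\sigma^i,j}^k$, realized as irreducible $V_{\mathbb{Z}\gamma}$-modules in Proposition \ref{twisted modules identification}, all satisfy $\mbox{qdim}_V W_{\sigma^i,j}^k=1\cdot\mbox{qdim}_V V_{\mathbb{Z}\gamma}=4$, while $V_{\mathbb{Z}\beta+\frac18\beta}$ and $V_{\mathbb{Z}\beta+\frac38\beta}$, being irreducible $V_{\mathbb{Z}\beta}$-modules, satisfy $\mbox{qdim}_V V_{\mathbb{Z}\beta+\frac{i}{8}\beta}=1\cdot 6=6$.

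Finally, for $V_{\mathbb{Z}\beta+\frac14\beta}^n$ I would set $d_n=\mbox{qdim}V_{\mathbb{Z}\beta+\frac14\beta}^n$ and read off, from the three decompositions of $V_{\mathbb{Z}\gamma+\frac16\gamma}$, $V_{\mathbb{Z}\gamma+\frac12\gamma}$, $V_{\mathbb{Z}\gamma-\frac16\gamma}$ in Proposition \ref{non-twisted decomposition} (each left-hand lattice module having quantum dimension $4$ over $V$), the relations $d_0+d_1=d_1+d_2=d_0+d_2=4$. Summing gives $d_0+d_1+d_2=6$, whence each $d_n=2$.

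The bookkeeping is routine; the genuinely load-bearing steps are the multiplicativity identity and the two applications of Theorem \ref{fix point order}. I expect the main obstacle to be verifying the hypotheses of that theorem for the intermediate algebra $V_{\mathbb{Z}\beta}^{+}$—in particular its $\sigma$-rationality and the positivity of the conformal weights of all its twisted modules—because the resulting value $\mbox{qdim}_V V_{\mathbb{Z}\beta}^{+}=3$ underlies nearly every entry; this positivity is precisely what the conformal-weight table for the $W_{\sigma^i,j}^k$ provides.
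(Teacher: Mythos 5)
Your overall architecture (multiplicativity of quantum dimensions along chains of subalgebras, simple-current arguments for lattice modules, the decompositions of Propositions~\ref{non-twisted decomposition} and \ref{twisted modules identification}, and the linear system $d_0+d_1=d_1+d_2=d_0+d_2=4$) largely coincides with the paper's, but the one step where you diverge contains a genuine gap, and it is the step you yourself call load-bearing. You obtain $\mbox{qdim}_{V}V_{\mathbb{Z}\beta}^{+}=3$ by applying Theorem~\ref{fix point order} to $V_{\mathbb{Z}\beta}^{+}$ with $G=\langle\sigma\rangle$. The hypotheses of that theorem require more than positivity of the conformal weights of the irreducible $\sigma^{i}$-twisted modules (which the weight table does provide): they require $V_{\mathbb{Z}\beta}^{+}$ to be $\sigma^{i}$-rational, i.e.\ semisimplicity of the admissible $\sigma^{i}$-twisted module category, for $i=1,2$. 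Nothing in this paper or in the results it imports supplies that fact: \cite{DJ4} is quoted only for the construction and classification of the two irreducible $\sigma^{i}$-twisted modules $W_{\sigma^{i},j}$ and their weights, and the mechanism behind Remark~\ref{A4 12dim} --- every finite-order automorphism of $V_{L_{2}}$ equals $e^{2\pi iu(0)}$ with $u\in(V_{L_{2}})_{1}$ (Proposition~\ref{inner automorphism}), so twisted module categories are equivalent to the ordinary one --- is unavailable for $V_{\mathbb{Z}\beta}^{+}$, whose weight-one subspace is zero. This is precisely why the paper never applies Theorem~\ref{fix point order} to $V_{\mathbb{Z}\beta}^{+}$: it instead decomposes $V_{L_{2}}=V_{\mathbb{Z}\beta}^{+}\oplus V_{\mathbb{Z}\beta}^{-}\oplus V_{\mathbb{Z}\beta+\frac{\beta}{2}}^{+}\oplus V_{\mathbb{Z}\beta+\frac{\beta}{2}}^{-}$, notes that the last three summands are simple currents of $V_{\mathbb{Z}\beta}^{+}$ and hence have the same quantum dimension over $V_{L_{2}}^{A_{4}}$ as $V_{\mathbb{Z}\beta}^{+}$ itself, and divides $12$ by $4$.

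Fortunately your own argument repairs the gap without new input. Your second pillar, $\mbox{qdim}_{V}V_{\mathbb{Z}\gamma}=4$, is sound (there the needed $g$-rationality of $V_{L_{2}}$ does hold, by Remark~\ref{A4 12dim}), and Proposition~\ref{non-twisted decomposition} then yields everything you extracted from the defective step: $(V_{\mathbb{Z}\beta}^{+})^{0}$ is $V_{L_{2}}^{A_{4}}$ itself, so its quantum dimension is $1$; $V_{\mathbb{Z}\gamma}\cong(V_{\mathbb{Z}\beta}^{+})^{0}\oplus V_{\mathbb{Z}\beta}^{-}$ forces $\mbox{qdim}V_{\mathbb{Z}\beta}^{-}=3$; next $V_{\mathbb{Z}\gamma+\frac{1}{3}\gamma}\cong(V_{\mathbb{Z}\beta}^{+})^{1}\oplus V_{\mathbb{Z}\beta}^{-}$ and its analogue force $\mbox{qdim}(V_{\mathbb{Z}\beta}^{+})^{1}=\mbox{qdim}(V_{\mathbb{Z}\beta}^{+})^{2}=1$; additivity then gives $\mbox{qdim}_{V}V_{\mathbb{Z}\beta}^{+}=3$ and $\mbox{qdim}_{V}V_{\mathbb{Z}\beta}=6$, after which your treatment of $V_{\mathbb{Z}\beta+\frac{r}{8}\beta}$, of $V_{\mathbb{Z}\beta+\frac{1}{4}\beta}^{n}$, and of the twisted sector goes through verbatim. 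I would also point out that your route through the twisted sector --- $W_{\sigma^{i},j}^{k}\cong V_{\mathbb{Z}\gamma+\mu}$ by Proposition~\ref{twisted modules identification}, a simple current of $V_{\mathbb{Z}\gamma}$, hence of quantum dimension $1\cdot 4=4$ over $V$ --- is cleaner than the paper's, which first gets $\mbox{qdim}_{V}W_{\sigma^{i},j}=12$ via the $\Delta$-twist comparison of characters (Proposition~\ref{twisted quantum}) and then argues that the three irreducible constituents have equal quantum dimension.
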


\begin{proof} 1) We know from Remark \ref{A4 12dim} that
\[
\mbox{qdim}V_{L_{2}}=\left|A_{4}\right|=12.
\]
 It is also obvious that
\[
V_{L_{2}}=V_{\mathbb{Z}\beta}^{+}\oplus V_{\mathbb{Z}\beta}^{-}\oplus V_{\mathbb{Z}\beta+\frac{\beta}{2}}^{+}\oplus V_{\mathbb{Z}\beta+\frac{\beta}{2}}^{-},
\]
 where as $V_{L_{2}}^{A_{4}}$-module $V_{\mathbb{Z}\beta}^{+}=\left(V_{\mathbb{Z}\beta}^{+}\right)^{0}\oplus\left(V_{\mathbb{Z}\beta}^{+}\right)^{1}\oplus\left(V_{\mathbb{Z}\beta}^{+}\right)^{2}$
and $V_{\mathbb{Z}\beta}^{-}\cong V_{\mathbb{Z}\beta+\frac{\beta}{2}}^{+}\cong V_{\mathbb{Z}\beta+\frac{\beta}{2}}^{-}.$
Since $ $$V_{\mathbb{Z}\beta}^{-},\ V_{\mathbb{Z}\beta+\frac{\beta}{2}}^{+},\ V_{\mathbb{Z}\beta+\frac{\beta}{2}}^{-}$
are all simple currents of $V_{\mathbb{Z}\beta}^{+},$ one gets
\[
\mbox{qdim}V_{\mathbb{Z}\beta}^{-}=\mbox{qdim}V_{\mathbb{Z}\beta}^{+}=3.
\]
From (\ref{Decomp1}), we have the decomposition as irreducible $\left(V_{\mathbb{Z}\beta}^{+}\right)^{\left\langle \sigma\right\rangle }$-modules:

\[
V_{\mathbb{Z}\beta}^{+}=\left(V_{\mathbb{Z}\beta}^{+}\right)^{0}\oplus\left(V_{\mathbb{Z}\beta}^{+}\right)^{1}\oplus\left(V_{\mathbb{Z}\beta}^{+}\right)^{2}
\]
where $\left(V_{\mathbb{Z}\beta}^{+}\right)^{0}=\left(V_{\mathbb{Z}\beta}^{+}\right)^{\left\langle \sigma\right\rangle }$
with $\mbox{qdim}\left(V_{\mathbb{Z}\beta}^{+}\right)^{\left\langle \sigma\right\rangle }=1$.
Since $\left(V_{\mathbb{Z}\beta}^{+}\right)^{1}$ and $\left(V_{\mathbb{Z}\beta}^{+}\right)^{2}$
are irreducible $\left(V_{\mathbb{Z}\beta}^{+}\right)^{\left\langle \sigma\right\rangle }$-modules,
by Proposition \ref{quantum-product} and 1),
we get

\[
\mbox{qdim}\left(V_{\mathbb{Z}\beta}^{+}\right)^{1}=\mbox{qdim}\left(V_{\mathbb{Z}\beta}^{+}\right)^{2}=1.
\]

2) By \cite{DL}, every irreducible $V_{L}$-module is a simple current.
Thus from Proposition \ref{quantum-product} we get
\[
\mbox{qdim}{}_{V_{\mathbb{Z}\beta}}V_{\mathbb{Z}\beta+\frac{1}{8}\beta}=1\ \hbox{and}\ \mbox{qdim}{}_{V_{\mathbb{Z}\beta}^{+}}V_{\mathbb{Z}\beta+\frac{1}{8}\beta}=2.
\]
Hence
\[
\mbox{qdim}V_{\mathbb{Z}\beta+\frac{1}{8}\beta}=\mbox{qdim}V_{\mathbb{Z}\beta}=6.
\]

3) Recall that $V_{L_{2}}^{\langle\sigma\rangle}=V_{\mathbb{Z}\gamma},$
where $(\gamma,\gamma)=18.$ We get $\mbox{qdim}V_{\mathbb{Z}\gamma}=4,$
since $\mbox{qdim} V_{L_{2}}=12$ and $o(\sigma)=3.$ Notice that by Proposition
\ref{non-twisted decomposition},
\[
V_{\mathbb{Z}\text{\ensuremath{\gamma}}+\frac{1}{6}\gamma}=V_{\mathbb{Z\beta}+\frac{1}{4}\beta}^{0}+V_{\mathbb{Z\beta}+\frac{1}{4}\beta}^{1},
\]
\[
V_{\mathbb{Z}\gamma+\frac{1}{2}\gamma}=V_{\mathbb{Z\beta}+\frac{1}{4}\beta}^{1}+V_{\mathbb{Z\beta}+\frac{1}{4}\beta}^{2},
\]
\[
V_{\mathbb{Z}\gamma-\frac{1}{6}\gamma}=V_{\mathbb{Z\beta}+\frac{1}{4}\beta}^{0}+V_{\mathbb{Z\beta}+\frac{1}{4}\beta}^{2},
\]
 where $q\dim V_{\mathbb{Z}\gamma+\mu}=4,$ for any $\mu=\pm\frac{1}{6}\gamma,\ \frac{1}{2}\gamma$.
It is easy to determine

\[
\mbox{qdim}V_{\mathbb{Z}\beta+\frac{1}{4}\beta}^{0}=\mbox{qdim}V_{\mathbb{Z}\beta+\frac{1}{4}\beta}^{1}=\mbox{qdim}V_{\mathbb{Z}\beta+\frac{1}{4}\beta}^{2}=2.
\]

4) We have $\mbox{qdim}V_{L_{2}}=\mbox{qdim}V_{L_{2}+\frac{\alpha}{2}}=12$.
By Proposition \ref{quantum-product}, we have
\[
\mbox{qdim}_{V}W_{\sigma^{i},j}=12,\ i,j=1,2.
\]
Consider the action of $\left(V_{\mathbb{Z}\beta}^{+}\right)^{k}$
on $W_{\sigma^{i},j}^{k},i,j=1,2;k=0,1,2$. Use the similar argument
in 3), we can prove $\mbox{qdim}W_{\sigma^{i},j}^{k}$ are the same for
all $i,j=1,2;$ $k=0,1,2.$ Therefore we get $\mbox{qdim}W_{\sigma^{i},j}^{k}=4$
for $i,j=1,2;$ $k=0,1,2$.

\end{proof}

\begin{remark}
Let $V$ be a vertex operator algebra with only finitely many irreducible
modules, the global dimension is defined as $\mbox{glob}(V)=\sum_{M\in Irr(V)}\mbox{qdim}(M)^{2}$
\cite{DJX}. Assume $G$ is a finite subgroup of $Aut(G),$ it is
conjectured that $\left|G\right|^{2}\mbox{glob}\left(V\right)=\mbox{glob}V^{G}$,
which was derived in the frame work of conformal nets \cite{X}.
The vertex operator algebra version is still open. However, the quantum
dimensions above verify this conjecture for vertex operator algebra $V_{L_{2}}$
and group $A_{4}.$

 \end{remark}

\section{Fusion rules}
\def\theequation{5.\arabic{equation}}
\setcounter{equation}{0}

In this section, we find fusion rules for irreducible $V_{L_{2}}^{A_{4}}$-modules.
Quantum dimensions play an important role in determining fusions.
We also need the Verlinde formula to deal with these fusion rules that
involve with twisted modules. We first list all fusion products results,
then we give the proof.

Let $W^{1},W^{2},W^{3}$ be irreducible $V_{L_{2}}^{A_{4}}$-modules.
For simplicity, in the following, the space of all intertwining operators
of type $\left(\begin{array}{c}W^3\\
W^{1}\,W^{2}\end{array}\right)$ is denoted by $I\left(\begin{array}{c}W^3\\
W^{1}\,W^{2}\end{array}\right),$
instead of $I_{V_{L_{2}^{A_{4}}}}\left(\begin{array}{c}W^3\\
W^{1}\,W^{2}\end{array}\right).$
The fusion product of $W^{1}$ and $W^{2}$ is denoted by $W^{1}\boxtimes W^{2}$,
instead of $W^{1}\boxtimes_{V_{L_{2}}^{A_{4}}}W^{2}$.

To determine fusion products of $W_{\sigma^{m},i}^{j}\boxtimes W_{\sigma^{m},k}^{l}$,
$i,k,m=1,2$, $j,l=0,1,2$, we first need to find out certain entries
of the $S$-matrix.

\begin{lemma}\label{S-matrix} The entries of the $S$-matrix that
involve with irreducible twisted modules of $V_{L_{2}}^{A_{4}}$ are
given as the table in Appendix.\end{lemma}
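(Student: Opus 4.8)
The plan is to compute the required entries of the $S$-matrix by exploiting the relationship between the orbifold $V_{L_2}^{A_4}$ and the well-understood lattice vertex operator algebra $V_{\mathbb{Z}\gamma}$, where $V_{L_2}^{\langle\sigma\rangle}\cong V_{\mathbb{Z}\gamma}$ with $(\gamma,\gamma)=18$. The key observation is that the irreducible twisted $V_{L_2}^{A_4}$-modules $W_{\sigma^i,j}^k$ have already been identified in Proposition \ref{twisted modules identification} with specific irreducible $V_{\mathbb{Z}\gamma}$-modules of the form $V_{\mathbb{Z}\gamma+\mu}$. Since the $S$-matrix for a rank-one lattice vertex operator algebra $V_L$ with $L=\mathbb{Z}\gamma$ is classical and explicit, the natural approach is to transport that known $S$-matrix through the identifications to read off the entries we need for $V_{L_2}^{A_4}$.

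First I would recall the explicit formula for the $S$-matrix of $V_{\mathbb{Z}\gamma}$. For the lattice vertex operator algebra associated to $L=\mathbb{Z}\gamma$ with $(\gamma,\gamma)=2k$ (here $k=9$), the irreducible modules are $V_{L+\frac{i}{2k}\gamma}$ for $i$ ranging over $2k$ cosets, and the modular $S$-matrix has entries of the shape $\frac{1}{\sqrt{2k}}e^{-2\pi i \, rs/(2k)}$ up to the standard normalization, obtained from the transformation of the theta-functions and the $\eta$-function under $\tau\mapsto -1/\tau$. I would write down these entries, matching each coset representative $\mu$ to the label appearing in Proposition \ref{twisted modules identification}, so that the rows and columns indexed by the twisted modules $W_{\sigma^i,j}^k$ receive concrete numerical values.

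The central technical point is the compatibility between the $S$-transformations at the two levels. The $S$-matrix in Definition for $V_{L_2}^{A_4}$ records how the characters $Z_i(u,v,\tau)$ transform, and through the identification of each $W_{\sigma^i,j}^k$ with a single irreducible $V_{\mathbb{Z}\gamma}$-module, the corresponding character of the $V_{L_2}^{A_4}$-module coincides with the character of the lattice module. Hence the relevant block of the $V_{L_2}^{A_4}$ $S$-matrix is inherited directly from the lattice $S$-matrix entries. I would verify that the conformal weights recorded in the tables (such as $\frac{1}{36}$, $\frac{25}{36}$, $\frac{1}{9}$, etc.) are consistent with the $\frac{r^2}{4k}=\frac{r^2}{36}$ grading of the lattice cosets, which provides an independent check that the identification and therefore the $S$-entries are correctly aligned. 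As a further consistency check, I would confirm via Remark \ref{quan dim Si0/S00} that $\frac{S_{i,0}}{S_{0,0}}$ reproduces the quantum dimension $4$ computed in Theorem \ref{quantum dimensions} for each twisted module.

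The main obstacle I anticipate is bookkeeping rather than conceptual: correctly tracking which coset $V_{\mathbb{Z}\gamma+\mu}$ corresponds to each $W_{\sigma^i,j}^k$ and $V_{\mathbb{Z}\beta+\cdots}$ label, and ensuring that the normalization and sign conventions in the definition $Z_i(u,v,-1/\tau)=\sum_j S_{i,j}Z_j(-v,u,\tau)$ are matched to the lattice computation, including the subtlety that contragredients may permute labels (as encoded in Theorem \ref{Verlinde formula}(1)). Care is also needed because the twisted modules sit in distinct $\sigma^i$-twisted sectors, so I would make sure the full list of $21$ irreducibles is consistently ordered before extracting the submatrix. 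Once the lattice $S$-matrix is written down and the dictionary of identifications is applied, the entries in the Appendix table follow by direct substitution.
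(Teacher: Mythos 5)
Your core strategy --- writing down the lattice $S$-matrix of $V_{\mathbb{Z}\gamma}$, $(\gamma,\gamma)=18$, from the theta-function transformation and transporting it through the identifications of Proposition \ref{twisted modules identification} --- is exactly the paper's route (the paper quotes $S_{\lambda_k,\lambda_l}=\frac{1}{\sqrt{18}}e^{-2\pi i(\lambda_k,\lambda_l)}$ from Serre). However, as written your plan only produces the block of the table in which \emph{both} indices are twisted modules, i.e.\ both are identified with full lattice modules $V_{\mathbb{Z}\gamma+\lambda}$, and this leaves a genuine gap: the Appendix table also has rows indexed by the untwisted modules $(V_{\mathbb{Z}\beta}^{+})^{m}$, $V_{\mathbb{Z}\beta}^{-}$, $V_{\mathbb{Z}\beta+\frac{j}{8}\beta}$, $V_{\mathbb{Z}\beta+\frac{1}{4}\beta}^{n}$, which are \emph{proper} submodules of lattice modules, or (for $V_{\mathbb{Z}\beta+\frac{1}{8}\beta}$, $V_{\mathbb{Z}\beta+\frac{3}{8}\beta}$) occur in no $V_{\mathbb{Z}\gamma}$-module at all. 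For these rows nothing is ``inherited directly'': one must expand $Z_{M^{j}}(-1/\tau)$ (with $M^{j}$ twisted, $\cong V_{\mathbb{Z}\gamma+\lambda_l}$) by the lattice $S$-matrix and then decompose each lattice character into irreducible $V_{L_2}^{A_4}$-characters via Proposition \ref{non-twisted decomposition}, which yields the rule the paper actually uses: $S_{i,j}=\sum_{s}S_{\lambda_l,\lambda_{k_s}}$, summed over all cosets with $M^{i}\subset V_{\mathbb{Z}\gamma+\lambda_{k_s}}$, and $S_{i,j}=0$ when $M^{i}$ occurs in no coset. This summation/vanishing step (the source of the zeros in rows $3,4,5$ and of the composite entries in rows $18$--$20$) is absent from your proposal.

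The second, more serious, omission concerns the first column of the table, i.e.\ the entries $S_{i,0}$ against $M^{0}=(V_{\mathbb{Z}\beta}^{+})^{0}$. Since $M^{0}$ is not a full lattice module, these entries cannot be obtained by transporting the lattice $S$-matrix at all. The paper derives them by using the quantum-dimension identity \emph{forwards}: the lattice computation gives $S_{i,0}=\frac{1}{\sqrt{18}}$ for the twisted rows $i=6,\dots,17$; combining this with $\mbox{qdim}\,M^{i}=S_{i,0}/S_{0,0}=4$ (Remark \ref{quan dim Si0/S00} together with Theorem \ref{quantum dimensions}) one \emph{solves} for $S_{0,0}=\frac{1}{4\sqrt{18}}$, and then $S_{i,0}=\mbox{qdim}(M^{i})\cdot S_{0,0}$ fills in the entire column, including the untwisted rows. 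You invoke the same identity only as an after-the-fact consistency check, which is circular as a derivation: your method produces neither $S_{0,0}$ nor the untwisted entries $S_{i,0}$, so the check you describe could not even be carried out. (One further point, which I note but do not count against you since the paper passes over it as well: contragredient pairs of modules have identical ordinary characters, so coefficients extracted from plain $q$-characters are not literally unique; pinning them down requires the insertion-decorated traces $Z_i(u,v,\tau)$ in terms of which the $S$-matrix is defined.)
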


\begin{proof} For convenience,  we denote the irreducible $V_{L_{2}}^{A_{4}}$-modules
by $M^{i}$, $i=0,1,\cdots,20$ as following:

\begin{center}
\begin{tabular}{|c|c|c|c|c|c|}
\hline
$\left(V_{\mathbb{Z}\beta}^{+}\right)^{0}$ & $\left(V_{\mathbb{Z}\beta}^{+}\right)^{1}$ & $\left(V_{\mathbb{Z}\beta}^{+}\right)^{2}$  & $V_{\mathbb{Z}\beta}^{-}$  & $V_{\mathbb{Z}\beta+\frac{1}{8}\beta}$ & $V_{\mathbb{Z}\beta+\frac{3}{8}\beta}$\tabularnewline
\hline
$M^{0}$ & $M^{1}$ & $M^{2}$ & $M^{3}$ & $M^{4}$ & $M^{5}$\tabularnewline
\hline
\end{tabular}
\par\end{center}

\begin{center}
\begin{tabular}{|c|c|c|c|c|c|}
\hline
$W_{\sigma,1}^{0}$ & $W_{\sigma,1}^{1}$ & $W_{\sigma,1}^{2}$ & $W_{\sigma,2}^{0}$ & $W_{\sigma,2}^{1}$ & $W_{\sigma,2}^{2}$\tabularnewline
\hline
\hline
$M^{6}$ & $M^{7}$ & $M^{8}$ & $M^{9}$ & $M^{10}$ & $M^{11}$\tabularnewline
\hline
\end{tabular}
\par\end{center}

\begin{center}
\begin{tabular}{|c|c|c|c|c|c|}
\hline
$W_{\sigma^{2},1}^{0}$ & $W_{\sigma^{2},1}^{1}$ & $W_{\sigma^{2},1}^{2}$ & $W_{\sigma^{2},2}^{0}$ & $W_{\sigma^{2},2}^{1}$ & $W_{\sigma^{2},2}^{2}$\tabularnewline
\hline
\hline
$M^{12}$ & $M^{13}$ & $M^{14}$ & $M^{15}$ & $M^{16}$ & $M^{17}$\tabularnewline
\hline
\end{tabular}
\par\end{center}

\begin{center}
\begin{tabular}{|c|c|c|}
\hline
$V_{\mathbb{Z}\beta+\frac{1}{4}\beta}^{0}$ & $V_{\mathbb{Z}\beta+\frac{1}{4}\beta}^{1}$ & $V_{\mathbb{Z}\beta+\frac{1}{4}\beta}^{2}$\tabularnewline
\hline
$M^{18}$ & $M^{19}$ & $M^{20}$\tabularnewline
\hline
\end{tabular}
\par\end{center}

First we consider the vertex operator algebra $V_{\mathbb{Z}\gamma}$
where $\left(\gamma,\gamma\right)=18$. Its irreducible modules
are $V_{\mathbb{Z}\gamma+\lambda_{k}}$, where
$\lambda_{k}=\frac{k}{18}\gamma$ and $k=0,\cdots,17$. By Page 106
\cite{S}, we see that

\[
Z_{V_{\mathbb{Z}\gamma+\lambda_{i}}}\left(-\frac{1}{\tau}\right)=\sum_{j=0}^{17}\frac{1}{\sqrt{18}}e^{-2\pi i\left(\lambda_{i},\lambda_{j}\right)}Z_{V_{\mathbb{Z}\gamma+\lambda_{j}}}\left(\tau\right).
\]
Thus the entries of $S$-matrix $\left(S_{\lambda_{k},\lambda_{l}}\right)$
for $V_{\mathbb{Z}\gamma}$ is given by

\[
S_{\lambda_{k},\lambda_{l}}=\frac{1}{\sqrt{18}}e^{-2\pi i(\lambda_{k},\lambda_{l})},k,l=0,1,\cdots,17.
\]

Denote the $S$-matrix for the vertex operator algebra $V_{L_{2}}^{A_{4}}$
by $\left(S_{i,j}\right)$. By the identification given in (\ref{twisted modules identification})
and $S$-matrix of $V_{\mathbb{Z}\gamma}$, it is easy to see that
$S_{i,0}=\frac{1}{\sqrt{18}},i=6,\cdots,17$. By Remark \ref{quan dim Si0/S00}
and quantum dimensions listed in Theorem \ref{quantum dimensions},
we have
\[
\mbox{qdim}M^{i}=\frac{S_{i,0}}{S_{0,0}}=4,i=6,7,\cdots,17,
\]
 which implies $S_{0,0}=\frac{1}{4\sqrt{18}}$. Hence we have $S_{i,0}=\frac{\mbox{qdim}M^{i}}{4\sqrt{18}}$ for
$i=0,1,\cdots,20$. Applying the quantum dimensions as listed in Theorem
\ref{quantum dimensions}, we get the first column of the table.

Now let $M^{j}\cong V_{\mathbb{Z}\gamma+\lambda_{l}}.$ If $M^{i}$ is not a submodule of $V_{\mathbb{Z}\gamma+\lambda_{k}}$ for all $\lambda_{k},$ then $S_{i,j}=0.$ Otherwise,
 $M^{i}$ is a submodule of $V_{\mathbb{Z}\gamma+\lambda_{k_{s}}}$  for some $k_1,...,k_r$
and $M^{i}$ is not a submodule of $V_{\mathbb{Z}\gamma+\lambda_{p}}$
for all $\lambda_{p}\not=\lambda_{k_{s}},\ \forall s=1,\cdots,r.$ In this case, $S_{ij}=\sum_{s=1}^{r}S_{\lambda_{l},\lambda_{k_{s}}}$.

In this way, we can get the entries of the $S$-matrix as listed in
the table in Appendix. \end{proof}

\begin{theorem}\label{Fusion Rules}The fusion rules for all irreducible
$V_{L_{2}}^{A_{4}}$-modules are given as following (here $\overline{n}$ is remainder when dividing
$n$ by $3$  for $n\in\mathbb{Z}$):

\begin{equation}
\left(V_{\mathbb{Z}\beta}^{+}\right)^{i}\boxtimes\left(V_{\mathbb{Z}\beta}^{+}\right)^{j}=\left(V_{\mathbb{Z}\beta}^{+}\right)^{\overline{i+j}}, \ i,j=0,1,2\label{1.1}
\end{equation}

$ $
\begin{equation}
\left(V_{\mathbb{Z}\beta}^{+}\right)^{i}\boxtimes V_{\mathbb{Z}\beta+\frac{\beta}{4}}^{j}= V_{\mathbb{Z}\beta+\frac{\beta}{4}}^{\overline{i+j}}  \ i,j=0,1,2\label{1.2}
\end{equation}
\begin{equation}
\left(V_{\mathbb{Z}\beta}^{+}\right)^{i}\boxtimes V_{\mathbb{Z}\beta}^{-}=V_{\mathbb{Z}\beta}^{-}, \ i=0,1,2\label{1.3}
\end{equation}

\begin{eqnarray}
\left(V_{_{\mathbb{Z}\beta}}^{+}\right)^{k}\boxtimes W_{\sigma^{i},i}^{l} & = & W_{\sigma^{i},i}^{\overline{l-k}},\ i=1,2;k,l=0,1,2\nonumber \\
\left(V_{\mathbb{Z}\beta}^{+}\right)^{k}\boxtimes W_{\sigma^{i},3-i}^{l} & = & W_{\sigma^{i},3-i}^{\overline{k+l}},\ i=1,2;k,l=0,1,2\label{1.4}
\end{eqnarray}

\begin{equation}
\left(V_{\mathbb{Z}\beta}^{+}\right)^{i}\boxtimes V_{\mathbb{Z}\beta+\frac{j}{8}\beta}=V_{\mathbb{Z}\beta+\frac{j}{8}\beta},\ i=0,1,2;j=1,3\label{1.6-1}
\end{equation}

\begin{equation}
V_{\mathbb{Z}\beta+\frac{\beta}{4}}^{i}\boxtimes V_{\mathbb{Z}\beta}^{-}=V_{\mathbb{Z}\beta+\frac{\beta}{4}}^{0}\oplus V_{\mathbb{Z}\beta+\frac{\beta}{4}}^{1}\oplus V_{\mathbb{Z}\beta+\frac{\beta}{4}}^{2}, \ i=0,1,2\label{2.3}
\end{equation}

\begin{equation}
 V_{\mathbb{Z}\beta+\frac{\beta}{4}}^{i}\boxtimes V_{\mathbb{Z}\beta+\frac{\beta}{4}}^{j}=V_{\mathbb{Z}\beta}^{-}\oplus\left(V_{\mathbb{Z}\beta}^{+}\right)^{\overline{i+j}},\ i,j=0,1,2\label{2.2}
\end{equation}

\begin{eqnarray}
V_{\mathbb{Z}\beta+\frac{\beta}{4}}^{k}\boxtimes W_{\sigma^{i},3-i}^{l} & = & W_{\sigma^{i},i}^{\overline{2l-k}}\oplus W_{\sigma^{i},i}^{\overline{2l-k+1}}, \ i=1,2;k,l=0,1,2\nonumber \\
V_{\mathbb{Z}\beta+\frac{\beta}{4}}^{k}\boxtimes W_{\sigma^{i},i}^{l} & = & W_{\sigma^{i},3-i}^{\overline{k-l}}\oplus W_{\sigma^{i},3-i}^{\overline{k-l+1}}, \ i=1,2;k,l=0,1,2\label{2.4}
\end{eqnarray}

\begin{equation}
 V_{\mathbb{Z}\beta+\frac{\beta}{4}}^{k}\boxtimes V_{\mathbb{Z}\beta+\frac{l}{8}\beta}=V_{\mathbb{Z}\beta+\frac{1}{8}\beta}\oplus V_{\mathbb{Z}\beta+\frac{3}{8}\beta}, \ k=0,1,2;l=1,3\label{2.6}
\end{equation}

\begin{equation}
V_{\mathbb{Z}\beta}^{-}\boxtimes V_{\mathbb{Z}\beta}^{-}=
\left(V_{\mathbb{Z}\beta}^{+}\right)^{0}\oplus\left(V_{\mathbb{Z}\beta}^{+}\right)^{1}\oplus\left(V_{\mathbb{Z}\beta}^{+}\right)^{2}\oplus2V_{\mathbb{Z}\beta}^{-}\label{3.3}
\end{equation}

\begin{equation}
V_{\mathbb{Z}\beta}^{-}\boxtimes W_{\sigma^{i},j}^{k}=W_{\sigma^{i},j}^{0}\oplus W_{\sigma^{i},j}^{1}\oplus W_{\sigma^{i},j}^{2}, \ i,j=1,2;k=0,1,2\label{3.4}
\end{equation}

\begin{eqnarray}
V_{\mathbb{Z}\beta}^{-}\boxtimes V_{\mathbb{Z}\beta+\frac{1}{8}\beta} & = & V_{\mathbb{Z}\beta+\frac{1}{8}\beta}\oplus2V_{\mathbb{Z}\beta+\frac{3}{8}\beta}\nonumber \\
V_{\mathbb{Z}\beta}^{-}\boxtimes V_{\mathbb{Z}\beta+\frac{3}{8}\beta} & = & 2V_{\mathbb{Z}\beta+\frac{1}{8}\beta}\oplus V_{\mathbb{Z}\beta+\frac{3}{8}\beta}\label{3.6}
\end{eqnarray}

\begin{eqnarray}
V_{\mathbb{Z}\beta+\frac{r}{8}\beta}\boxtimes V_{\mathbb{Z}\beta+\frac{r}{8}\beta} & = & V_{\mathbb{Z}\beta+\frac{\beta}{4}}^{0}\oplus V_{\mathbb{Z}\beta+\frac{\beta}{4}}^{1}\oplus V_{\mathbb{Z}\beta+\frac{\beta}{4}}^{2}\nonumber \\
 &  & \oplus\left(V_{\mathbb{Z}\beta}^{+}\right)^{0}\oplus\left(V_{\mathbb{Z}\beta}^{+}\right)^{1}\oplus\left(V_{\mathbb{Z}\beta}^{+}\right)^{2}\nonumber \\
 &  & \oplus V_{\mathbb{Z}\beta}^{-}\oplus2V_{\mathbb{Z}\beta+\frac{\beta}{8}}\oplus2V_{\mathbb{Z}\beta+\frac{3\beta}{8}},\ \ r=1,3.\label{6.6-1}
\end{eqnarray}

\begin{eqnarray}
V_{\mathbb{Z}\beta+\frac{1}{8}\beta}\boxtimes V_{\mathbb{Z}\beta+\frac{3}{8}\beta} & = & V_{\mathbb{Z}\beta+\frac{\beta}{4}}^{0}\oplus V_{\mathbb{Z}\beta+\frac{\beta}{4}}^{1}\oplus V_{\mathbb{Z}\beta+\frac{\beta}{4}}^{2}\nonumber \\
 &  & \oplus2V_{\mathbb{Z}\beta}^{-}\oplus2V_{\mathbb{Z}\beta+\frac{\beta}{8}}\oplus2V_{\mathbb{Z}\beta+\frac{3\beta}{8}}\label{6.6-2}
\end{eqnarray}

\begin{equation}
W_{\sigma^{i},1}^{k}\boxtimes W_{\sigma^{i},1}^{l}=W_{\sigma^{3-i},1}^{0}\text{\ensuremath{\oplus}}W_{\sigma^{3-i},1}^{1}\oplus W_{\sigma^{3-i},1}^{2}\oplus W_{\sigma^{3-i},2}^{\overline{-i(k+l)}}\label{4.4-3}
\end{equation}

\begin{equation}
W_{\sigma^{i},2}^{k}\boxtimes W_{\sigma^{i},2}^{l}=W_{\sigma^{3-i},1}^{0}\text{\ensuremath{\oplus}}W_{\sigma^{3-i},1}^{1}\oplus W_{\sigma^{3-i},1}^{2}\oplus W_{\sigma^{3-i},2}^{\overline{1+i(k+l)}}\label{4.4-4}
\end{equation}

\begin{equation}
W_{\sigma^{i},1}^{k}\boxtimes W_{\sigma^{i},2}^{l}=\oplus_{k=0}^{2}W_{\sigma^{3-i},2}^{k}\oplus W_{\sigma^{3-i},2}^{\overline{i(l-k)}}\label{4.4-5}
\end{equation}

\begin{eqnarray}
W_{\sigma^{i},j}^{k}\boxtimes V_{\mathbb{Z}\beta+\frac{s}{8}\beta} & = & W_{\sigma^{i},1}^{0}\oplus W_{\sigma^{i},1}^{1}\oplus W_{\sigma^{i},1}^{2}\oplus W_{\sigma^{i},2}^{0}\oplus W_{\sigma^{i},2}^{1}\oplus W_{\sigma^{i},2}^{2}\label{4.6}\\
 &  & i,j=1,2;k=0,1,2;s=1,3.\nonumber
\end{eqnarray}

For $k,l=0,1,2$, $r,i=1,2$,

\begin{equation}
W_{\sigma,r}^{k}\boxtimes W_{\sigma^{2,},r}^{l}=(V_{\mathbb{Z}\beta}^{+})^{\overline{r(l-k)}}\oplus V_{\mathbb{Z}\beta}^{-}\oplus V_{\mathbb{Z}\beta+\frac{1}{8}\beta}\oplus V_{\mathbb{Z}\beta+\frac{3}{8}\beta}\label{4.4-1}
\end{equation}

\[
\]
\begin{equation}
W_{\sigma,r}^{k}\boxtimes W_{\sigma^{2,},3-r}^{l}=(V_{\mathbb{Z}\beta+\frac{1}{4}\beta})^{\overline{r(-k-l)}}\oplus(V_{\mathbb{Z}\beta+\frac{1}{4}\beta})^{\overline{r(-k\text{-}l+1)}}\oplus V_{\mathbb{Z}\beta+\frac{1}{8}\beta}\oplus V_{\mathbb{Z}\beta+\frac{3}{8}\beta}\label{4.4-2}
\end{equation}

\end{theorem}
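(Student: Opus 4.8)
The plan is to separate the twenty fusion products into those that are forced by quantum dimensions together with the available gradings, and the handful of twisted-times-twisted products that require the Verlinde formula. I would fix the labelling $M^{0},\dots,M^{20}$ and keep at hand: the quantum dimensions of Theorem \ref{quantum dimensions}; the multiplicativity $\mbox{qdim}(W^{1}\boxtimes W^{2})=\mbox{qdim}W^{1}\cdot\mbox{qdim}W^{2}$ and the characterization of simple currents as the modules of quantum dimension $1$ (Proposition \ref{quantum-product}); the symmetries $N_{W^{1},W^{2}}^{W^{3}}=N_{W^{2},W^{1}}^{W^{3}}=N_{W^{1},(W^{3})'}^{(W^{2})'}$ (Proposition \ref{fusion rule symmmetry property}); and two extra gradings that every nonzero intertwining operator must respect, namely the $\mathbb{Z}_{3}$ twist-sector grading (untwisted, $\sigma$-twisted, $\sigma^{2}$-twisted) and, on the modules realized as $V_{\mathbb{Z}\gamma}$-modules in Propositions \ref{non-twisted decomposition} and \ref{twisted modules identification}, the $\mathbb{Z}_{18}$ lattice grading governed by $V_{\mathbb{Z}\gamma+\frac{k}{18}\gamma}\boxtimes V_{\mathbb{Z}\gamma+\frac{l}{18}\gamma}=V_{\mathbb{Z}\gamma+\frac{k+l}{18}\gamma}$.

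First I would dispose of the simple-current products \ref{1.1}--\ref{1.6-1}. Each $(V_{\mathbb{Z}\beta}^{+})^{i}$ has quantum dimension $1$, so fusing it with an irreducible module yields one irreducible of the same quantum dimension. Since $V_{\mathbb{Z}\beta}^{+}=\bigoplus_{i}(V_{\mathbb{Z}\beta}^{+})^{i}$ is a $\mathbb{Z}_{3}$ simple-current extension of $V_{L_{2}}^{A_{4}}$, this fusion permutes cyclically the $V_{L_{2}}^{A_{4}}$-summands of each irreducible $V_{\mathbb{Z}\beta}^{+}$-module, so the exact target is read off from that permutation (equivalently, from the shift of the $\sigma$-eigenvalue) after matching conformal weights modulo $\mathbb{Z}$; the $\mathbb{Z}_{3}$-fixed modules $V_{\mathbb{Z}\beta}^{-}$ and $V_{\mathbb{Z}\beta+\frac{j}{8}\beta}$ are returned unchanged, as in \ref{1.3} and \ref{1.6-1}. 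Next I would treat the products in which both factors have quantum dimension exceeding $1$ but which are still pinned down by counting: the total quantum dimension is known, the admissible summands lie in a single twist sector, and nonnegativity and integrality of the coefficients, the symmetry of Proposition \ref{fusion rule symmmetry property}, and the already-known fusion rules of the intermediate algebras $V_{\mathbb{Z}\beta}^{+}$ (over which $V_{\mathbb{Z}\beta}^{-}$ and $V_{\mathbb{Z}\beta+\frac{j}{8}\beta}$ are simple currents) and $V_{\mathbb{Z}\gamma}$ usually leave a single possibility after restriction. This should settle \ref{2.3}--\ref{2.6}, \ref{3.3}--\ref{3.6}, \ref{6.6-1}, \ref{6.6-2} and \ref{4.6}; for instance \ref{4.6} has total quantum dimension $4\cdot6=24$ and lies entirely in the $\sigma^{i}$-twisted sector, which contains exactly six irreducibles of quantum dimension $4$, so each occurs once.

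The genuinely hard cases are the twisted-times-twisted products \ref{4.4-3}--\ref{4.4-5} (where $W_{\sigma^{i},\cdot}\boxtimes W_{\sigma^{i},\cdot}$ lands in the $\sigma^{3-i}$-twisted sector) and \ref{4.4-1}, \ref{4.4-2} (where $W_{\sigma,r}\boxtimes W_{\sigma^{2},r}$ lands in the untwisted sector). Here quantum dimensions cannot decide the outcome: the target sector contains more irreducibles of the relevant quantum dimension than actually appear, and in \ref{4.4-1}, \ref{4.4-2} the summands include $V_{\mathbb{Z}\beta+\frac{1}{8}\beta}$ and $V_{\mathbb{Z}\beta+\frac{3}{8}\beta}$, which carry no $V_{\mathbb{Z}\gamma}$-charge at all. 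For these I would apply the Verlinde formula (Theorem \ref{Verlinde formula}), $N_{i,j}^{k}=\sum_{s=0}^{20}\frac{S_{i,s}S_{j,s}S_{s,k}^{-1}}{S_{0,s}}$ with $S_{s,k}^{-1}=S_{s,k'}$, using the twisted entries of the $S$-matrix supplied by Lemma \ref{S-matrix}. This reduces each fusion coefficient to a finite sum over the $21$ modules, and evaluating these sums fixes both the summands and the residue classes $\overline{-i(k+l)}$, $\overline{1+i(k+l)}$, $\overline{i(l-k)}$, $\overline{r(l-k)}$ and $\overline{r(-k-l)}$ recorded in the statement.

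The hard part will be precisely this last group, because the quantum-dimension method is blind to the fine structure it requires; the whole argument then rests on having the correct twisted $S$-matrix entries, whose derivation (Lemma \ref{S-matrix}) is the real labour. That derivation uses the lattice $S$-matrix $S_{\lambda_{k},\lambda_{l}}=\frac{1}{\sqrt{18}}e^{-2\pi i(\lambda_{k},\lambda_{l})}$ of $V_{\mathbb{Z}\gamma}$, the identifications of Propositions \ref{non-twisted decomposition} and \ref{twisted modules identification}, the normalization $S_{0,0}=\frac{1}{4\sqrt{18}}$ forced by $\mbox{qdim}M^{i}=S_{i,0}/S_{0,0}$ (Remark \ref{quan dim Si0/S00}), and the rule that an entry $S_{i,j}$ with $M^{j}\cong V_{\mathbb{Z}\gamma+\lambda_{l}}$ equals $\sum_{s}S_{\lambda_{l},\lambda_{k_{s}}}$ summed over the lattice cosets whose restriction contains $M^{i}$. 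Once the $S$-matrix is in hand I would finish by cross-checking the whole table: the symmetries of Proposition \ref{fusion rule symmmetry property}, a few instances of associativity, and the agreement of quantum dimensions on both sides of each product.
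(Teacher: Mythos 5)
Your overall architecture (simple currents first, then counting plus symmetry, then Verlinde for twisted-times-twisted products) mirrors the paper's, and your Verlinde step for (\ref{4.4-3})--(\ref{4.4-5}) is exactly what the paper does. But your middle step hides the hardest part of the actual proof, and this is a genuine gap. The products (\ref{3.3}), (\ref{3.6}), (\ref{6.6-1}) and (\ref{6.6-2}) contain summands with \emph{multiplicity two}, and no combination of quantum-dimension counting, integrality, symmetry, and restriction of intermediate-algebra fusion rules can produce the lower bound $N\geq 2$: those tools only detect which irreducibles occur and bound totals. Concretely, in $V_{\mathbb{Z}\beta}^{-}\boxtimes V_{\mathbb{Z}\beta}^{-}$, after the three $\left(V_{\mathbb{Z}\beta}^{+}\right)^{i}$ are accounted for, the leftover quantum dimension $6$ could a priori be $2V_{\mathbb{Z}\beta}^{-}$, or $V_{\mathbb{Z}\beta+\frac{1}{8}\beta}$, or $V_{\mathbb{Z}\beta+\frac{3}{8}\beta}$, or three of the $V_{\mathbb{Z}\beta+\frac{\beta}{4}}^{j}$, and attempts to exclude the alternatives by symmetry run in circles through (\ref{3.6}), which has the same problem. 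The paper resolves this by \emph{explicitly constructing} two intertwining operators of type $I\left(_{V_{\mathbb{Z}\beta}^{-}\ V_{\mathbb{Z}\beta}^{-}}^{\ \ V_{\mathbb{Z}\beta}^{-}}\right)$: the $V_{\mathbb{Z}\beta}^{+}$-intertwining operators with second entries $V_{\mathbb{Z}\beta+\frac{\beta}{2}}^{+}$ and $V_{\mathbb{Z}\beta+\frac{\beta}{2}}^{-}$ become operators of the \emph{same} type after the identification $V_{\mathbb{Z}\beta}^{-}\cong V_{\mathbb{Z}\beta+\frac{\beta}{2}}^{\pm}$, and a direct lattice vertex operator computation of leading terms shows they are linearly independent; an analogous computation with the twisted operators $\mathcal{Y}^{\theta}\otimes\psi_{\lambda}$ is what settles (\ref{6.6-1}). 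This idea is entirely absent from your proposal. (Also, $V_{\mathbb{Z}\beta+\frac{j}{8}\beta}$ is not a simple current over $V_{\mathbb{Z}\beta}^{+}$ --- its quantum dimension there is $2$; it is a simple current over $V_{\mathbb{Z}\beta}$.)

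Two further steps fail as you describe them. For (\ref{4.6}), even granting that the product lies in the $\sigma^{i}$-twisted sector (which itself needs the duality argument the paper gives), counting does not yield ``each of the six occurs once'': the multiset $2W_{\sigma,p}^{0}\oplus 2W_{\sigma,p}^{1}\oplus 2W_{\sigma,p}^{2}$ also has quantum dimension $24$ and is consistent with (\ref{1.4}) and (\ref{1.6-1}); the paper eliminates it by computing $V_{\mathbb{Z}\beta+\frac{\beta}{4}}^{0}\boxtimes\left(V_{\mathbb{Z}\beta+\frac{1}{8}\beta}\boxtimes W_{\sigma,1}^{0}\right)$ two ways via associativity and reaching a contradiction. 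For (\ref{4.4-1}) and (\ref{4.4-2}), your plan to run the Verlinde formula ``using the twisted entries of the $S$-matrix supplied by Lemma \ref{S-matrix}'' cannot be executed: since the targets include $V_{\mathbb{Z}\beta+\frac{1}{8}\beta}$ and $V_{\mathbb{Z}\beta+\frac{3}{8}\beta}$, the Verlinde sum requires entries $S_{s,k'}$ with both indices untwisted (in particular rows and columns $4,5$), which Lemma \ref{S-matrix} does not provide and whose method cannot produce, precisely because those modules occur in no $V_{\mathbb{Z}\gamma+\lambda_{k}}$ --- a fact you yourself note. The paper instead deduces (\ref{4.4-1}), (\ref{4.4-2}) without Verlinde: symmetry (Proposition \ref{fusion rule symmmetry property}) applied to the already-proved rules (\ref{2.4}), (\ref{3.4}), (\ref{4.6}) identifies the summands for $k=l=0$, quantum dimensions fix the total, and fusing with the simple currents $\left(V_{\mathbb{Z}\beta}^{+}\right)^{i}$ through (\ref{1.4}) and associativity handles general $k,l$.
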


\begin{proof}

\emph{(\ref{1.1}), (\ref{1.2}), (\ref{1.4}) and (\ref{2.4}) }are
obvious by Proposition \ref{non-twisted decomposition} and fusion
rules for irreducible $V_{\mathbb{Z}\gamma}$-modules.

\emph{Proof of (\ref{1.3}): } By Proposition \ref{quantum-product}, each irreducible
module with quantum dimension 1 is a simple current. Thus the right
hand side should be an irreducible module with quantum dimension $3$ while
$V_{\mathbb{Z}\beta}^{-}$ is the only irreducible module with such
quantum dimension.

\emph{Proof of (\ref{1.6-1}):} By fusion rules for irreducible $V_{\mathbb{Z}\beta}^{+}$-modules,
$I_{V_{\mathbb{Z}\beta}^{+}}\left(_{V_{\mathbb{Z}\beta}^{+}\ V_{\mathbb{Z}\beta+\frac{j}{8}\beta}}^{\ \ V_{\mathbb{Z}\beta+\frac{j}{8}\beta}}\right)\not=0$.
Since $\left(V_{\mathbb{Z}\beta}^{+}\right)^{i}\subset V_{\mathbb{Z}\beta}^{+}$
is a simple current of $\left(V_{\mathbb{Z}\beta}^{+}\right)^{0},$
we get the desired fusion rule.

\emph{Proof of (\ref{2.3}):} First by fusion rules for $V_{\mathbb{Z}\beta}^{+}$-modules, we have
\[
V_{\mathbb{Z}\beta}^{-}\boxtimes_{V_{\mathbb{Z}\beta}^+}V_{\mathbb{Z}\beta+\frac{\beta}{4}}=V_{\mathbb{Z}\beta+\frac{\beta}{4}}
\]
Let $\mathcal{Y}\left(\cdot,z\right)$ be the intertwining operator
of type $\left(_{V_{\mathbb{Z}\beta}^{-}\ V_{\mathbb{Z}\beta+\frac{\beta}{4}}}^{\ \ V_{\mathbb{Z}\beta+\frac{\beta}{4}}}\right).$
For a fixed $v\in V_{\mathbb{Z}\beta+\frac{\beta}{4}}^{i}$,
consider $\mathcal{Y}\left(u,z\right)v,\ u\in V_{\mathbb{Z}\beta}^{-}$.
Then $\left\langle u_{i}v|u\in V_{\mathbb{Z}\beta}^{-},i\in\Z\right\rangle=V_{\mathbb{Z}\beta+\frac{\beta}{4}}^{\text{+}}.$
 Thus we get fusion product
for irreducible $V_{L_{2}}^{A_{4}}$-modules as follows:

\[
V_{\mathbb{Z}\beta}^{-}\boxtimes V_{\mathbb{Z}\beta+\frac{\beta}{4}}^{i}= V_{\mathbb{Z}\beta+\frac{\beta}{4}}^{0}\oplus V_{\mathbb{Z}\beta+\frac{\beta}{4}}^{1}\oplus V_{\mathbb{Z}\beta+\frac{\beta}{4}}^{2},i=0,1,2.
\]

\emph{Proof of (\ref{2.2}): } From (\ref{2.3}), we see that $I\left(_{ V_{\mathbb{Z}\beta+\frac{\beta}{4}}^{i}\ V_{\mathbb{Z}\beta}^{-}}^{\ \  V_{\mathbb{Z}\beta+\frac{\beta}{4}}^{j}}\right)\not=0$,
$i,j=0,1,2.$ $ $Since $ $$\left( V_{\mathbb{Z}\beta+\frac{\beta}{4}}^{1}\right)^{'}= V_{\mathbb{Z}\beta+\frac{\beta}{4}}^{2}$,
$\left(V_{\mathbb{Z}\beta}^{-}\right)^{'}=V_{\mathbb{Z}\beta}^{-}$
and $\left( V_{\mathbb{Z}\beta+\frac{\beta}{4}}^{0}\right)^{'}= V_{\mathbb{Z}\beta+\frac{\beta}{4}}^{0}$.
We obtain
\[
I\left(_{ V_{\mathbb{Z}\beta+\frac{\beta}{4}}^{i}\  V_{\mathbb{Z}\beta+\frac{\beta}{4}}^{j}}^{ \ \ \ \ V_{\mathbb{Z}\beta}^{-}}\right)\not=0,i,j=0,1,2.
\]
By Proposition \ref{fusion rule symmmetry property}, $I\left(_{ V_{\mathbb{Z}\beta+\frac{\beta}{4}}^{i} V_{\mathbb{Z}\beta+\frac{\beta}{4}}^{j}}^{\ \ \left(V_{\mathbb{Z}\beta}^{+}\right)^{\overline{i+j}}}\right)\not=0$.
By counting quantum dimensions, we get (\ref{2.2}).

\emph{Proof of (\ref{2.6}):} Since $ V_{\mathbb{Z}\beta+\frac{\beta}{4}}^{i}\subset V_{\mathbb{Z}\beta+\frac{\beta}{4}}$
and $V_{\mathbb{Z}\beta+\frac{\beta}{4}}$ is an irreducible $V_{\mathbb{Z}\beta}^{+}$-module.
By fusion rules of irreducible $V_{\mathbb{Z}\beta}^{+}$-modules,
we get $I\left(_{ V_{\mathbb{Z}\beta+\frac{\beta}{4}}^{i}V_{\mathbb{Z}\beta+\frac{1}{8}\beta}}^{\ \ V_{\mathbb{Z}\beta+\frac{r}{8}\beta}}\right)\not=0,$
$r=1,3.$ By counting quantum dimensions of both sides, we get the
desired fusion product.

\emph{Proof of (\ref{3.3}):} First by (\ref{1.3}) and Proposition \ref{fusion rule symmmetry property},
we get $I\left(_{V_{\mathbb{Z}\beta}^{-}\ V_{\mathbb{Z}\beta}^{-}}^{\left(V_{\mathbb{Z}\beta}^{+}\right)^{i}}\right)\not=0$,
$i=0,1,2$. By fusion rules for irreducible $V_{L}^{+}$-modules \cite{A2}, we
get $I_{V_{\mathbb{Z}\beta}^{+}}\left(_{V_{\mathbb{Z}\beta}^{-}\ V_{\mathbb{Z}\beta+\frac{1}{2}\beta}^{\pm}}^{\ \ V_{\mathbb{Z}\beta+\frac{1}{2}\beta}^{\mp}}\right)\not=0$.
Using the isomorphism
\[
V_{\mathbb{Z}\beta}^{-}\cong V_{\mathbb{Z}\beta+\frac{1}{2}\beta}^{+}\cong V_{\mathbb{Z}\beta+\frac{1}{2}\beta}^{-},
\]
gives $I\left(_{V_{\mathbb{Z}\beta}^{-}\ V_{\mathbb{Z}\beta}^{-}}^{\ \ V_{\mathbb{Z}\beta}^{-}}\right)\neq0.$
 So it suffices to prove that $I\left(_{V_{\mathbb{Z}\beta}^{-}\ V_{\mathbb{Z}\beta}^{-}}^{\ \ V_{\mathbb{Z}\beta}^{-}}\right)=2.$
 Let $\mathcal{Y}_{1}\left(\cdot,z\right)$,
$\mathcal{Y}_{2}\left(\cdot,z\right)$ be the standard intertwining operators of types
$\left(_{V_{\mathbb{Z}\beta}^{-}\,V_{\mathbb{Z}\beta+\frac{\beta}{2}}^{+}}^{\ \ V_{\mathbb{Z}\beta}^{-}}\right)$
and $\left(_{V_{\mathbb{Z}\beta}^{-}\,V_{\mathbb{Z}\beta+\frac{\beta}{2}}^{-}}^{\ \ V_{\mathbb{Z}\beta}^{-}}\right)$
respectively (see \cite{DL}, \cite{A2}). Note that $e^{\beta}-e^{-\beta}\in V_{\mathbb{Z}\beta}^{-}$, $e^{\frac{\beta}{2}}+e^{-\frac{\beta}{2}}\in V_{\mathbb{Z}\beta+\frac{\beta}{2}}^{+}$
and $e^{\frac{\beta}{2}}-e^{-\frac{\beta}{2}}\in V_{\mathbb{Z}\beta+\frac{\beta}{2}}^{-}$.
Then we have
\begin{eqnarray*}
& &\ \ \ \  \mathcal{Y}_{1}\left(\text{\ensuremath{\beta}(-1)\textbf{1}},z\right)\left(e^{\frac{\beta}{2}}
+e^{-\frac{\beta}{2}}\right)\\
& &=4\left(
e^{\frac{\beta}{2}}-e^{-\frac{\beta}{2}}\right)z^{-1}
+\beta\left(-1\right)\left(e^{\frac{\beta}{2}}+e^{-\frac{\beta}{2}}\right)z^{0}
+{\rm higher\ power\ terms\ of }\ z,
\end{eqnarray*}
\begin{eqnarray*}
& &\ \ \ \  \mathcal{Y}_{1}\left(\text{\ensuremath{\beta}(-1)\textbf{1}},z\right)\left(e^{\frac{\beta}{2}}
-e^{-\frac{\beta}{2}}\right)\\
& &=4\left(
e^{\frac{\beta}{2}}+e^{-\frac{\beta}{2}}\right)z^{-1}
+\beta\left(-1\right)\left(e^{\frac{\beta}{2}}-e^{-\frac{\beta}{2}}\right)z^{0}
+{\rm higher\ power\ terms\ of }\ z.
\end{eqnarray*}
We also have
\begin{eqnarray*}
 &  &\ \ \ \  \mathcal{Y}_{1}\left(e^{\beta}-e^{-\beta},z\right)\left(e^{\frac{\beta}{2}}+e^{-\frac{\beta}{2}}\right)\\
 & &= Y\left(e^{\beta},z\right)\left(e^{\frac{\beta}{2}}+e^{-\frac{\beta}{2}}\right)-Y\left(e^{-\beta},z\right)\left(e^{\frac{\beta}{2}}+e^{-\frac{\beta}{2}}\right)\\
 & &= E^{-}\left(-\beta,z\right)E^{+}\left(-\beta,z\right)e^{\beta}z^{\beta}\left(e^{\frac{\beta}{2}}+e^{-\frac{\beta}{2}}\right)\\
 &  & \ \ \ \ -E^{-}\left(\beta,z\right)E^{+}\left(\beta,z\right)e^{-\beta}z^{-\beta}\left(e^{\frac{\beta}{2}}+e^{-\frac{\beta}{2}}\right)\\
 & & = E^{-}\left(-\beta,z\right)\left(z^4e^{\frac{3\beta}{2}}+z^{-4}e^{\frac{\beta}{2}}\right)-E^{-}\left(\beta,z\right)\left(z^{-4}e^{-\frac{\beta}{2}}+z^4e^{-\frac{3\beta}{2}}\right)\\
 & &= \exp\left(\sum_{n<0}\frac{-\beta\left(n\right)}{n}z^{n}\right)\left(z^4e^{\frac{3\beta}{2}}+z^{-4}e^{\frac{\beta}{2}}\right)\\
 &  & \ \ \ \ -\exp\left(\sum_{n<0}\frac{\beta\left(n\right)}{n}z^{n}\right)\left(z^{-4}e^{-\frac{\beta}{2}}+z^4e^{-\frac{3\beta}{2}}\right)\\
 & &=\beta\left(-1\right)\left(e^{\frac{\beta}{2}}+e^{-\frac{\beta}{2}}\right)z^{-4}+{\rm higher\ power\ terms\ of }\ z,
\end{eqnarray*}
\begin{eqnarray*}
 &  & \ \ \ \ \mathcal{Y}_{2}\left(e^{\beta}-e^{-\beta},z\right)\left(e^{\frac{\beta}{2}}-e^{-\frac{\beta}{2}}\right)\\
 &  &= Y\left(e^{\beta},z\right)\left(e^{\frac{\beta}{2}}-e^{-\frac{\beta}{2}}\right)-Y\left(e^{-\beta},z\right)\left(e^{\frac{\beta}{2}}-e^{-\frac{\beta}{2}}\right)\\
 & & =E^{-}\left(-\beta,z\right)E^{+}\left(-\beta,z\right)e^{\beta}z^{\beta}\left(e^{\frac{\beta}{2}}-e^{-\frac{\beta}{2}}\right)\\
 &  & \ \ \ \ -E^{-}\left(\beta,z\right)E^{+}\left(\beta,z\right)e^{-\beta}z^{-\beta}\left(e^{\frac{\beta}{2}}-e^{-\frac{\beta}{2}}\right)\\
 & &= E^{-}\left(-\beta,z\right)\left(z^4e^{\frac{3\beta}{2}}-z^{-4}e^{\frac{\beta}{2}}\right)-E^{-}\left(\beta,z\right)\left(z^{-4}e^{-\frac{\beta}{2}}-z^4e^{-\frac{3\beta}{2}}\right)\\
 & &= \exp\left(\sum_{n<0}\frac{-\beta\left(n\right)}{n}z^{n}\right)\left(z^4e^{\frac{3\beta}{2}}-z^{-4}e^{\frac{\beta}{2}}\right)\\
 & & \ \ \ \ -\exp\left(\sum_{n<0}\frac{\beta\left(n\right)}{n}z^{n}\right)\left(z^{-4}e^{-\frac{\beta}{2}}-z^{4}e^{-\frac{3\beta}{2}}\right)\\
 & &=-\beta\left(-1\right)\left(e^{\frac{\beta}{2}}-e^{-\frac{\beta}{2}}\right)z^{-4}+{\rm higher\ power\ terms\ of }\ z.
\end{eqnarray*}

From the above computations we see immediately that $\mathcal{Y}_{1}\left(\cdot,z\right)$
and $\mathcal{Y}_{2}\left(\cdot,z\right)$ are linearly independent.
Thus we obtain $N^{V_{\mathbb{Z}\beta}^{-}}_{V_{\mathbb{Z}\beta}^{-}\,V_{\mathbb{Z}\beta}^{-}}\ge2$.
By counting quantum dimensions as listed in Theorem \ref{quantum dimensions},
we get $N^{V_{\mathbb{Z}\beta}^{-}}_{V_{\mathbb{Z}\beta}^{-}\,V_{\mathbb{Z}\beta}^{-}}=2$
and hence we proved (\ref{3.3}).

\emph{Proof of (\ref{3.4}):} This is clear by fusion rules for irreducible $V_{\mathbb{Z}\gamma}$-modules
and the identification in Proposition \ref{non-twisted decomposition}
and \ref{twisted modules identification}.

\emph{Proof of (\ref{3.6}): } Since
\[
V_{\mathbb{Z}\beta+\frac{\beta}{8}}\cong V_{\mathbb{Z}\beta}^{T_{2},+}\cong V_{\mathbb{Z}\beta}^{T_{1},+},\ V_{\mathbb{Z}\beta+\frac{3\beta}{8}}\cong V_{\mathbb{Z}\beta}^{T_{2},-}\cong V_{\mathbb{Z}\beta}^{T_{1},-},\ V_{\mathbb{Z}\beta}^{-}\cong V_{\mathbb{Z}\beta+\frac{\beta}{2}}^{-}\cong V_{\mathbb{Z}\beta+\frac{\beta}{2}}^{+}
\]
as irreducible $V_{L_{2}}^{A_{4}}$-modules \cite{DJ4}, it follows from the fusion
rules of irreducible $V_{\mathbb{Z}\beta}^{+}$-modules \cite{A2} that $
I\left(_{V_{\mathbb{Z}\beta}^{-}\ V_{\mathbb{Z}\beta+\frac{j}{8}\beta}}^{\ \  V_{\mathbb{Z}\beta+\frac{i}{8}\beta}}\right)\not=0,\ i,j=1,3,i\not=j.
$
It suffices to prove that $ $$I\left(_{V_{\mathbb{Z}\beta}^{-}\ V_{\mathbb{Z}\beta+\frac{j}{8}\beta}}^{\ \ V_{\mathbb{Z}\beta+\frac{i}{8}\beta}}\right)=2$
for $i,j=1,3,i\not=j.$ First we prove $I\left(_{V_{\mathbb{Z}\beta}^{-}\ V_{\mathbb{Z}\beta+\frac{1}{8}\beta}}^{\ \ V_{\mathbb{Z}\beta+\frac{3}{8}\beta}}\right)=2.$
Let $\mathcal{Y}_{1}\left(\cdot,z\right)\in I_{V_{\mathbb{Z}\beta}^{+}}\left(_{V_{\mathbb{Z}\beta+\frac{\beta}{2}}^{+}\ V_{\mathbb{Z}\beta+\frac{\beta}{8}}}^{\ \ V_{\mathbb{Z}\beta+\frac{3\beta}{8}}}\right)$,
$\mathcal{Y}_{2}\left(\cdot,z\right)\in I_{V_{\mathbb{Z}\beta}^{+}}\left(_{V_{\mathbb{Z}\beta+\frac{\beta}{2}}^{-}\ V_{\mathbb{Z}\beta+\frac{\beta}{8}}}^{\ \ V_{\mathbb{Z}\beta+\frac{3\beta}{8}}}\right).$ Note that
$e^{\frac{\beta}{2}}+e^{-\frac{\beta}{2}}\in V_{\mathbb{Z}\beta+\frac{\beta}{2}}^{+}$,
$e^{\frac{\beta}{2}}-e^{-\frac{\beta}{2}}\in V_{\mathbb{Z}\beta+\frac{\beta}{2}}^{-}$,
$e^{\frac{\beta}{8}},e^{-\frac{7\beta}{8}}\in V_{\mathbb{Z}\beta+\frac{\beta}{8}}$.
Considering $\mathcal{Y}_{1}\left(e^{\frac{\beta}{2}}+e^{-\frac{\beta}{2}},z\right)e^{\frac{\beta}{8}},$
$\mathcal{Y}_{2}\left(e^{\frac{\beta}{2}}-e^{-\frac{\beta}{2}},z\right)e^{\frac{\beta}{8}},$
$\mathcal{Y}_{1}\left(e^{\frac{\beta}{2}}+e^{-\frac{\beta}{2}},z\right)e^{-\frac{7\beta}{8}},$ $\mathcal{Y}_{2}\left(e^{\frac{\beta}{2}}-e^{-\frac{\beta}{2}},z\right)e^{-\frac{7\beta}{8}}$
and applying similar argument as in the proof of (\ref{3.3}), we
can prove $N_{V_{\mathbb{Z}\beta}^{-}\,V_{\mathbb{Z}\beta+\frac{1}{8}\beta}}^{V_{\mathbb{Z}\beta+\frac{3}{8}\beta}}=2.$
Similarly, $N_{V_{\mathbb{Z}\beta}^{-}\, V_{\mathbb{Z}\beta+\frac{3}{8}\beta}}^{V_{\mathbb{Z}\beta+\frac{1}{8}\beta}}=2.$

\emph{Proof of (\ref{6.6-1}): } Case 1: $r=1$. From (\ref{1.6-1}), (\ref{2.6}) and Proposition
\ref{fusion rule symmmetry property}, we get
\[
I\left(_{V_{\mathbb{Z}\beta+\frac{1}{8}\beta}\ V_{\mathbb{Z}\beta+\frac{1}{8}\beta}}^{\ \ V_{\mathbb{Z}\beta+\frac{1}{4}\beta}^{i}}\right)\not=0,\ I\left(_{V_{\mathbb{Z}\beta+\frac{1}{8}\beta}\ V_{\mathbb{Z}\beta+\frac{1}{8}\beta}}^{\ \ \left(V_{\mathbb{Z}\beta}^{+}\right)^{j}}\right)\not=0,i,j=0,1,2.
\]
So it is sufficient to prove $N_{\mathbb{Z}\beta+\frac{1}{8}\beta\ \mathbb{Z}\beta+\frac{1}{8}\beta}^{V_{\mathbb{Z}\beta+\frac{1}{8}\beta}}=2$
and $N_{\mathbb{Z}\beta+\frac{1}{8}\beta\ \mathbb{Z}\beta+\frac{1}{8}\beta}^{V_{\mathbb{Z}\beta+\frac{3}{8}\beta}}=2$
using the quantum dimensions.

Note from \cite{DJ4} that there are isomorphisms of irreducible $V_{L_{2}}^{A_{4}}$-modules:
\[
V_{\mathbb{Z}\beta+\frac{\beta}{8}}\cong V_{\mathbb{Z}\beta}^{T_{1},+}\cong V_{\mathbb{Z}\beta}^{T_{2},+},V_{\mathbb{Z}\beta}^{-}\cong V_{\mathbb{Z}\beta+\frac{\beta}{2}}^{-}\cong V_{\mathbb{Z}\beta+\frac{\beta}{2}}^{+}.
\]
By fusion rules of irreducible $V_{\mathbb{Z}\beta}^{+}$-modules
\cite{A2}, we get
\[
I\left(_{V_{\mathbb{Z}\beta+\frac{1}{8}\beta}\ V_{\mathbb{Z}\beta+\frac{1}{8}\beta}}^{\ \ V_{\mathbb{Z}\beta+\frac{1}{8}\beta}}\right)\not=0,I\left(_{V_{\mathbb{Z}\beta+\frac{1}{8}\beta}\ V_{\mathbb{Z}\beta+\frac{1}{8}\beta}}^{\ \ V_{\mathbb{Z}\beta+\frac{3}{8}\beta}}\right)\not=0.
\]
Let $T=T^{1}\oplus T^{2}$ be the direct sum of irreducible $\mathbb{C}\left[\Z\beta\right]$-modules
$T^{1}$ and $T^{2}$, and define a linear isomorphism $\psi\in\mbox{End}T$
by $\psi\left(t_{1}\right)=t_{2}$, $\psi\left(t_{2}\right)=t_{1}$,
where $t_{i}$ is a basis of $T^{i}$ for $i=1,2$. For $\lambda\in (\Z\beta)^{\circ}$,
we write $\lambda=r\beta/8+m\beta$ for $-3\le r\le 4$ and $m\in\mathbb{Z},$
and define $\psi_{\lambda}\in\mbox{End}T$ by $
\psi_{\lambda}=e_{m\alpha}{\psi}^{r}.$ By fusion rules for irreducible $V_{\Z\beta}^{+}$-modules \cite{A1},
we have $I_{V_{\mathbb{Z}\beta}^{+}}\left(_{V_{\mathbb{Z}\beta+\frac{1}{8}\beta\ V_{\Z\beta}^{T_{j,+}}}}^{\ \ V_{\Z\beta}^{T_{i},+}}\right)\not=0,i,j=1,2$.
Let $\mathcal{Y}_{ij}\left(\cdot,z\right)\in I_{V_{\mathbb{Z}\beta}^{+}}\left(_{V_{\mathbb{Z}\beta+\frac{1}{8}\beta\ V_{\Z\beta}^{T_{j,+}}}}^{\ \ V_{\Z\beta}^{T_{i},+}}\right)$.
Note that the intertwining operator is given by
\[
\mathcal{Y}\left(u,z\right)=\mathcal{Y}^{\theta}\left(u,z\right)\otimes\psi_{\lambda}\ \mbox{for\ }\lambda\in (\Z\beta)^{\circ}\ \mbox{and\ }u\in M\left(1,\lambda\right)
\]
where $\mathcal{Y}^{\theta}\left(e_{\lambda},z\right)=2^{-\left\langle \lambda,\lambda\right\rangle }z^{-\frac{\left\langle \lambda,\lambda\right\rangle }{2}}\mbox{exp}\left(\sum_{n\in1/2+\mathbb{N}}\frac{\lambda\left(-n\right)}{n}z^{n}\right)\mbox{exp}\left(-\sum_{n\in1/2+\mathbb{N}}\frac{\lambda\left(n\right)}{n}z^{n}\right).$

For $1\otimes e_{\beta+\frac{1}{8}\beta}\in V_{\mathbb{Z}\beta+\frac{1}{8}\beta}=M\left(1\right)\otimes\mathbb{C}\left[\mathbb{Z}\beta+\frac{1}{8}\beta\right],$
$t_{1}\in T_{1}$ and $t_{2}\in T_{2}$, we have
\begin{eqnarray}
 &  & \mathcal{Y}_{21}(1\otimes e_{\beta+\frac{1}{8}\beta},z)t_{1}=\mathcal{Y}_{21}^{\theta}(e_{\frac{9}{8}\beta},z)\psi_{\frac{9}{8}\beta}t_{1}\nonumber \\
 &  &\ \ \ \ = \mathcal{Y}_{21}^{\theta}(e_{\frac{9}{8}\beta},z)e_{\beta}\psi_{\frac{1}{8}\beta}t_{1}\nonumber \\
 & & \ \ \ \ =\mathcal{Y}_{21}^{\theta}(e_{\frac{9}{8}\beta},z)e_{\beta}t_{2}\nonumber \\
 &  &\ \ \ \ = -\mathcal{Y}_{21}^{\theta}(e_{\frac{9}{8}\beta},z)t_{2}\nonumber \\
 &  &\ \ \ \ =- 2^{-\frac{81}{8}}z^{-\frac{81}{16}}\mbox{exp}(-\sum_{n\in1/2+\mathbb{N}}\frac{\lambda(n)}{n}z^{n})t_{2},\label{1}
\end{eqnarray}
\begin{eqnarray}
 &  & \mathcal{Y}_{12}(1\otimes e_{\frac{9}{8}\beta},z)t_{2}=\mathcal{Y}_{12}^{\theta}( e_{\frac{9}{8}\beta},z)\psi_{\frac{1}{8}\beta}t_{2}\nonumber \\
 & & \ \ \ \ =\mathcal{Y}_{12}^{\theta}( e_{\frac{9}{8}\beta},z)e_{\beta}\psi t_{2}\nonumber \\
 & &\ \ \ \ = \mathcal{Y}_{12}^{\theta}(e_{\frac{9}{8}\beta},z)e_{\beta}t_{1}\nonumber \\
 & & \ \ \ \ =\mathcal{Y}_{12}^{\theta}( e_{\frac{9}{8}\beta},z)t_{1}\nonumber \\
 &  &\ \ \ \  = 2^{-\frac{81}{8}}z^{-\frac{81}{16}}\mbox{exp}(-\sum_{n\in1/2+\mathbb{N}}\frac{\lambda(n)}{n}z^{n})t_{1}.\label{2}
\end{eqnarray}
We also have
\begin{eqnarray}
 &  & \mathcal{Y}_{21}(1\otimes e_{\frac{1}{8}\beta},z)t_{1}=\mathcal{Y}_{21}^{\theta}(e_{\frac{1}{8}\beta},z)\psi_{\frac{1}{8}\beta}t_{1}\nonumber \\
 & &\ \ \ \ = \mathcal{Y}_{21}^{\theta}(e_{\frac{1}{8}\beta},z)\psi t_{1}\nonumber \\
 & &\ \ \ \ = \mathcal{Y}_{21}^{\theta}(e_{\frac{1}{8}\beta},z)t_{2}\nonumber \\
 & &\ \ \ \ = -\mathcal{Y}_{21}^{\theta}(e_{\frac{1}{8}\beta},z)t_{2}\nonumber \\
 &  &\ \ \ \ = 2^{-\frac{1}{8}}z^{-\frac{1}{16}}\mbox{exp}(-\sum_{n\in1/2+\mathbb{N}}\frac{\lambda(n)}{n}z^{n})t_{2},\label{3}
\end{eqnarray}
\begin{eqnarray}
 &  & \mathcal{Y}_{12}(1\otimes e_{\frac{1}{8}\beta},z)t_{2}=\mathcal{Y}_{12}^{\theta}(e_{\frac{1}{8}\beta},z)\psi_{\frac{1}{8}\beta}t_{2}\nonumber\\
 & &\ \ \ \ =\mathcal{Y}_{12}^{\theta}(e_{\frac{1}{8}\beta},z)\psi t_{2}\nonumber \\
 &  &\ \ \ \ = \mathcal{Y}_{12}^{\theta}(e_{\frac{1}{8}\beta},z) t_{1}\nonumber \\
 &  &\ \ \ \ = 2^{-\frac{1}{8}}z^{-\frac{1}{16}}\mbox{exp}(-\sum_{n\in1/2+\mathbb{N}}\frac{\lambda(n)}{n}z^{n})t_{1}.\label{4}
\end{eqnarray}
 So $\mathcal{Y}_{12}\left(\cdot,z\right), \mathcal{Y}_{21}\left(\cdot,z\right)\in I\left(_{\mathbb{Z}\beta+\frac{1}{8}\beta\ \mathbb{Z}\beta+\frac{1}{8}\beta}^{\ \ V_{\mathbb{Z}\beta+\frac{1}{8}\beta}}\right)$
are linearly independent and
$N_{\mathbb{Z}\beta+\frac{1}{8}\beta\ \mathbb{Z}\beta+\frac{1}{8}\beta}^{ V_{\mathbb{Z}\beta+\frac{1}{8}\beta}}\ge2$.
By a similar argument, we can prove that $N_{\mathbb{Z}\beta+\frac{1}{8}\beta\ \mathbb{Z}\beta+\frac{1}{8}\beta}^{V_{\mathbb{Z}\beta+\frac{3}{8}\beta}}\ge2$.
Counting quantum dimensions of modules in the fusion product then asserts
\[
I\left(_{\mathbb{Z}\beta+\frac{1}{8}\beta\ \mathbb{Z}\beta+\frac{1}{8}\beta}^{\ \ V_{\mathbb{Z}\beta+\frac{1}{8}\beta}}\right)=2\ \mbox{and\ }I\left(_{\mathbb{Z}\beta+\frac{1}{8}\beta\ \mathbb{Z}\beta+\frac{1}{8}\beta}^{\ \ V_{\mathbb{Z}\beta+\frac{3}{8}\beta}}\right)=2.
\]

Case 2: $r=3$. The proof is similar to that of case 1. This finishes the proof of (\ref{6.6-1}).

\emph{Proof of (\ref{6.6-2}):} By (\ref{2.6}), (\ref{3.6}) and Proposition \ref{fusion rule symmmetry property},
we have
\[
I\left(_{V_{\mathbb{Z}\beta+\frac{\beta}{8}}\ V_{\mathbb{Z}\beta+\frac{3\beta}{8}}}^{\ \ V_{\mathbb{Z}\beta+\frac{1}{4}\beta}^{i}}\right)\not=0,i=0,1,2;\ I\left(_{V_{\mathbb{Z}\beta+\frac{\beta}{8}}\ V_{\mathbb{Z}\beta+\frac{3\beta}{8}}}^{\ \ V_{\mathbb{Z}\beta}^{-}}\right)\not=0.
\]
 Note that we have the following isomorphism of irreducible $V_{\mathbb{Z}\alpha}^{A_{4}}$-modules
\cite{DJ4}:
\[
V_{\mathbb{Z}\beta+\frac{\beta}{8}}\cong V_{\mathbb{Z}\beta}^{T_{1},+}\cong V_{\mathbb{Z}\beta}^{T_{2},+},V_{\mathbb{Z}\beta+\frac{3\beta}{8}}\cong V_{\mathbb{Z}\beta}^{T_{1},-}\cong V_{\mathbb{Z}\beta}^{T_{2},-}.
\]
Equation (\ref{6.6-1}) indicates that
\[
I\left(_{\mathbb{Z}\beta+\frac{1}{8}\beta\ \mathbb{Z}\beta+\frac{3}{8}\beta}^{\ \ V_{\mathbb{Z}\beta+\frac{1}{8}\beta}}\right)=2,\ I\left(_{\mathbb{Z}\beta+\frac{1}{8}\beta\ \mathbb{Z}\beta+\frac{3}{8}\beta}^{\ \ V_{\mathbb{Z}\beta+\frac{3}{8}\beta}}\right)=2.
\]

\emph{Proof of (\ref{4.4-3}), (\ref{4.4-4}) and (\ref{4.4-5}):} We can prove these fusion products
by applying Proposition \ref{Verlinde formula} and Lemma \ref{S-matrix}.

\emph{Proof of (\ref{4.6}): } We only give a proof of
\[
W_{\sigma,1}^{0}\boxtimes V_{\mathbb{Z}\beta+\frac{1}{8}\beta}=W_{\sigma,1}^{0}\oplus W_{\sigma,1}^{1}\oplus W_{\sigma,1}^{2}\oplus W_{\sigma,2}^{0}\oplus W_{\sigma,2}^{1}\oplus W_{\sigma,2}^{2}
\]
here and proofs for the other cases are similar.

First we prove that $I\left(_{W_{\sigma,1}^{0}V_{\mathbb{Z}\beta+\frac{1}{8}\beta}}^{\ \ W}\right)=0$
for any irreducible $V_{L_{2}}^{A_{4}}$-module $W$ appearing in
the untwisted $V_{\mathbb{Z}\beta}^{+}$-modules. Otherwise, there
is some $W$ such that $I\left(_{W_{\sigma,1}^{0}V_{\mathbb{Z}\beta+\frac{1}{8}\beta}}^{\ \ W}\right)\not=0$.
By Proposition \ref{fusion rule symmmetry property}, we obtain
$I\left(_{V_{\mathbb{Z}\beta+\frac{1}{8}\beta}\ W'}^{\ \ W_{\sigma^{2},1}^{0}}\right)\not=0$.
The  fusion products $V_{\mathbb{Z}\beta+\frac{1}{8}\beta}\boxtimes W'$
for all such $W$ have been known already. It is easy to see that $I\left(_{V_{\mathbb{Z}\beta+\frac{1}{8}\beta}\ W'}^{\ \ W_{\sigma^{2},1}^{0}}\right)=0$ for all such $W$, which is a contradiction.

Now we show that $I\left(_{W_{\sigma,1}^{0}V_{\mathbb{Z}\beta+\frac{1}{8}\beta}}^{\ \ W_{\sigma^{2},i}^{j}}\right)=0$,
$\forall i=1,2$, $j=0,1,2$. Otherwise, if there exists some $i_{0}\in\{1,2\}$,
$j_{0}\in\left\{ 0,1,2\right\} $ such that $I\left(_{W_{\sigma,1}^{0}V_{\mathbb{Z}\beta+\frac{1}{8}\beta}}^{\ \ W_{\sigma^{2},i_{0}}^{j_{0}}}\right)\not=0$.
Since $\left(V_{\mathbb{Z}\beta+\frac{1}{8}\beta}\right)^{'}=V_{\mathbb{Z}\beta+\frac{1}{8}\beta}$
and $\left(W_{\sigma^{2},i_{0}}^{j_{0}}\right)'=W_{\sigma,i_{0}}^{j_{0}}$, we see that
 $I\left(_{W_{\sigma,1}^{0}W_{\sigma,i_{0}}^{j_{0}}}^{\ \ V_{\mathbb{Z}\beta+\frac{1}{8}\beta}}\right)\not=0$ by Proposition \ref{fusion rule symmmetry property},
which contradicts with (\ref{4.4-3}) or (\ref{4.4-4}).

Thus we have
$W_{\sigma,1}^{0}\boxtimes V_{\mathbb{Z}\beta+\frac{1}{8}\beta}=\oplus_{p,q}m_{p,q}W_{\sigma,p}^{q}$
where $m_{p,q}$ are integers. Assume that $m_{p,q}\not=0$ for some
$p\in\left\{ 1,2\right\} $, $q\in\left\{ 0,1,2\right\} $, then by
(\ref{1.4}) and (\ref{1.6-1}) we have $m_{p0}=m_{p1}=m_{p2}\not=0$. Assume that $m_{3-p,k}=0$ for all $k=0,1,2$. Then by quantum dimensions
of each module, we get
\begin{equation}
W_{\sigma,1}^{0}\boxtimes V_{\mathbb{Z}\beta+\frac{1}{8}\beta}=2W_{\sigma,p}^{0}\oplus2W_{\sigma,p}^{1}\oplus2W_{\sigma,p}^{2}.\label{assumption 4.6}
\end{equation}
By (\ref{assumption 4.6}) and (\ref{2.4}) we obtain
\begin{equation}
\left(V_{\mathbb{Z}\beta+\frac{1}{4}\beta}\right)^{0}\boxtimes\left(W_{\sigma,1}^{0}\boxtimes V_{\mathbb{Z}\beta+\frac{1}{8}\beta}\right)=4W_{\sigma,2}^{0}\oplus4W_{\sigma,2}^{1}\oplus4W_{\sigma,2}^{2}.\label{assumption 4.6 proof}
\end{equation}
But by associativity of fusion product and (\ref{2.6}) we have
\begin{eqnarray*}
& & V_{\mathbb{Z}\beta+\frac{\beta}{4}}^{0}\boxtimes\left(V_{\mathbb{Z}\beta+\frac{1}{8}\beta}\boxtimes W_{\sigma,1}^{0}\right) =  \left(V_{\mathbb{Z}\beta+\frac{\beta}{4}}^{0}\boxtimes V_{\mathbb{Z}\beta+\frac{1}{8}\beta}\right)\boxtimes W_{\sigma,1}^{0}\\
 & & \ \ \ \ =\left(V_{\mathbb{Z}\beta+\frac{1}{8}\beta}\oplus V_{\mathbb{Z}\beta+\frac{3}{8}\beta}\right)\boxtimes W_{\sigma,1}^{0}\\
 & &\ \ \ \ = 2W_{\sigma,p}^{0}\oplus2W_{\sigma,p}^{1}\oplus2W_{\sigma,p}^{2}\oplus V_{\mathbb{Z}\beta+\frac{3}{8}\beta}\boxtimes W_{\sigma,1}^{0},
\end{eqnarray*}
a contradiction  with (\ref{assumption 4.6 proof}). Hence there
exists some $l=0,1,2$ such that $m_{3-p,l}\not=0$, then we also
have $m_{3-p,0}=m_{3-p,1}=m_{3-p,2}\not=0$ by applying (\ref{1.4}).
By counting quantum dimensions of both sides, we see that
\[
W_{\sigma,1}^{0}\boxtimes V_{\mathbb{Z}\beta+\frac{1}{8}\beta}=W_{\sigma,1}^{0}\oplus W_{\sigma,1}^{1}\oplus W_{\sigma,1}^{2}\oplus W_{\sigma,2}^{0}\oplus W_{\sigma,2}^{1}\oplus W_{\sigma,2}^{2}.
\]

\emph{Proof of (\ref{4.4-1}):} Since $\left(W_{\sigma,1}^{0}\right)^{'}=W_{\sigma^{2},1}^{0}$, by
Proposition \ref{fusion rule symmmetry property}, we get
\[
I\left(_{W_{\sigma,1}^{0}\ W_{\sigma^{2},1}^{0}}^{\ \ \left(V_{\mathbb{Z}\beta}^{+}\right)^{0}}\right)\not=0.
\]
By (\ref{3.4}), (\ref{4.6}) and Proposition \ref{fusion rule symmmetry property},
we obtain
\[
I\left(_{W_{\sigma^{2},1}^{0}W_{\sigma,1}^{0}}^{\ \ V_{\mathbb{Z}\beta}^{-}}\right)\not=0,\ I\left(_{W_{\sigma,1}^{0}W_{\sigma^{2},1}^{0}}^{\ \ V_{\mathbb{Z}\beta+\frac{r}{8}}}\right)\not=0,\ r=1,3.
\]
Thus
\begin{equation}
W_{\sigma,1}^{0}\boxtimes W_{\sigma^{2},1}^{0}=\left(V_{\mathbb{Z}\beta}^{+}\right)^{0}\oplus V_{\mathbb{Z}\beta}^{-}\oplus V_{\mathbb{Z}\beta+\frac{1}{8}\beta}\oplus V_{\mathbb{Z}\beta+\frac{3}{8}\beta}\label{4.4-1.1}
\end{equation}
by counting the quantum dimensions.

From (\ref{1.4}), for $k,l=0,1,2$, we have

\[
W_{\sigma,1}^{k}=\left(V_{\mathbb{Z}\beta}^{+}\right)^{\overline{-k}}\boxtimes W_{\sigma,1}^{0},\ W_{\sigma^{2},1}^{l}=\left(V_{\mathbb{Z}\beta}^{+}\right)^{l}\boxtimes W_{\sigma^{2},1}^{0}.
\]
So
\begin{eqnarray}
W_{\sigma,1}^{k}\boxtimes W_{\sigma^{2},1}^{l} & = & \left(V_{\mathbb{Z}\beta}^{+}\right)^{\overline{-k}}\boxtimes\left(V_{\mathbb{Z}\beta}^{+}\right)^{l}\boxtimes\left(W_{\sigma,1}^{0}\boxtimes W_{\sigma^{2},1}^{0}\right)\nonumber \\
 & = & \left(V_{\mathbb{Z}\beta}^{+}\right)^{\overline{l-k}}\boxtimes\left(\left(V_{\mathbb{Z}\beta}^{+}\right)^{0}\oplus V_{\mathbb{Z}\beta}^{-}+V_{\mathbb{Z}\beta+\frac{\beta}{8}}+V_{\mathbb{Z}\beta+\frac{3}{8}\beta}\right)\label{4.4-1.1kl}\\
 & = & \left(V_{\mathbb{Z}\beta}^{+}\right)^{\overline{l-k}}\oplus V_{\mathbb{Z}\beta}^{-}\oplus V_{\mathbb{Z}\beta+\frac{1}{8}\beta}\oplus V_{\mathbb{Z}\beta+\frac{3}{8}\beta}.\nonumber
\end{eqnarray}
Similarly we can prove that
\[
W_{\sigma,2}^{k}\boxtimes W_{\sigma^{2},2}^{l}=\left(V_{\mathbb{Z}\beta}^{+}\right)^{\overline{l-k}}\oplus V_{\mathbb{Z}\beta}^{-}\oplus V_{\mathbb{Z}\beta+\frac{1}{8}\beta}\oplus V_{\mathbb{Z}\beta+\frac{3}{8}\beta}.
\]
This finishes the proof of  (\ref{4.4-1}).

\emph{Proof of (\ref{4.4-2}): } From (\ref{2.4}), we have
\[
I\left(_{W_{\sigma,1}^{0}\ V_{\mathbb{Z}\beta+\frac{\beta}{4}}^{0}}^{\ \ W_{\sigma,2}^{0}}\right)\not=0,\ I\left(_{W_{\sigma,1}^{0}\ V_{\mathbb{Z}\beta+\frac{\beta}{4}}^{2}}^{\ \ W_{\sigma,2}^{0}}\right)\not=0.
\]
Since $\left(W_{\sigma,2}^{0}\right)'=W_{\sigma^{2},2}^{0}$ , $\left( V_{\mathbb{Z}\beta+\frac{\beta}{4}}^{0}\right)^{'}= V_{\mathbb{Z}\beta+\frac{\beta}{4}}^{0}$and
$\left( V_{\mathbb{Z}\beta+\frac{\beta}{4}}^{2}\right)^{'}= V_{\mathbb{Z}\beta+\frac{\beta}{4}}^{1}$,
by Proposition \ref{fusion rule symmmetry property} we obtain
\[
I\left(_{W_{\sigma,1}^{0}\ W_{\sigma^{2},2}^{0}}^{\ \  V_{\mathbb{Z}\beta+\frac{\beta}{4}}^{0}}\right)\not=0,I\left(_{W_{\sigma,1}^{0}\ W_{\sigma^{2},2}^{0}}^{\ \  V_{\mathbb{Z}\beta+\frac{\beta}{4}}^{1}}\right)\not=0.
\]
By (\ref{4.6}), for $r=1,2$, $k=0,1,2$, $s=1,3$,
\[
I\left(_{W_{\sigma,1}^{0}\ V_{\mathbb{Z}\beta+\frac{s}{8}\beta}}^{\ \ W_{\sigma,r}^{k}}\right)\not=0.
\]
Since $\left(V_{\mathbb{Z}\beta+\frac{s}{8}\beta}\right)^{'}=V_{\mathbb{Z}\beta+\frac{s}{8}\beta}$,
we obtain $I\left(_{W_{\sigma,1}^{0}\ W_{\sigma^{2},r}^{k}}^{\ \ V_{\mathbb{Z}\beta+\frac{s}{8}\beta}}\right)\not=0$.
In particular,
\[
I\left(_{W_{\sigma,1}^{0}\ W_{\sigma^{2},r}^{0}}^{\ \ V_{\mathbb{Z}\beta+\frac{s}{8}\beta}}\right)\not=0,s=1,3.
\]
By counting quantum dimensions, we obtain
\begin{equation*}
W_{\sigma,1}^{0}\boxtimes W_{\sigma^{2},2}^{0}= V_{\mathbb{Z}\beta+\frac{\beta}{4}}^{0}\oplus V_{\mathbb{Z}\beta+\frac{\beta}{4}}^{1}\oplus V_{\mathbb{Z}\beta+\frac{\beta}{8}}\oplus V_{\mathbb{Z}\beta+\frac{3}{8}\beta}.\label{4.4-1.2-0}
\end{equation*}

From (\ref{1.4}) we have
\[
W_{\sigma,1}^{k}=\left(V_{\mathbb{Z}\beta}^{+}\right)^{-k}\boxtimes W_{\sigma,1}^{0},\ W_{\sigma^{2},2}^{l}=\left(V_{\mathbb{Z}\beta}^{+}\right)^{-l}\boxtimes W_{\sigma^{2},2}^{0}.
\]
Thus
\begin{eqnarray*}
& &W_{\sigma,1}^{k}\boxtimes W_{\sigma^{2},2}^{l} =  \left(V_{\mathbb{Z}\beta}^{+}\right)^{\overline{-k-l}}\boxtimes\left(W_{\sigma,1}^{0}\boxtimes W_{\sigma^{2},2}^{0}\right)\\
 &  &\ \ \ \ = V_{\mathbb{Z}\beta+\frac{\beta}{4}}^{\overline{-k-l}}\oplus\left(V_{\mathbb{Z}\beta\oplus\frac{\beta}{4}}\right)^{\overline{-k-l+l}}\oplus V_{\mathbb{Z}\beta+\frac{\beta}{8}}\oplus V_{\mathbb{Z}\beta+\frac{3}{8}\beta}.
\end{eqnarray*}

Similarly, we can show
\[
W_{\sigma,2}^{k}\boxtimes W_{\sigma^{2},1}^{l}= V_{\mathbb{Z}\beta+\frac{\beta}{4}}^{\overline{k+l}}\oplus\left(V_{\mathbb{Z}\beta \oplus\frac{\beta}{4}}\right)^{\overline{k+l+l}}\oplus V_{\mathbb{Z}\beta+\frac{\beta}{8}}\oplus V_{\mathbb{Z}\beta+\frac{3}{8}\beta}.
\]
Thus (\ref{4.4-2}) holds.
\end{proof}

\section{Appendix}
\def\theequation{6.\arabic{equation}}
\setcounter{equation}{0}

The following is the part of $S$-matrix for irreducible $V_{L_{2}}^{A_{4}}$-modules
that we need:

\begin{tabular}{|c|c|c|c|c|c|c|c|}
\hline
$\sqrt{18}S_{i,j}$ & 0 & 6 & 7 & 8 & 9 & 10 & 11\tabularnewline
\hline
\hline
0 & $\frac{1}{4}$ & $1$ & $1$ & $1$ & $1$ & $1$ & $1$\tabularnewline
\hline
1 & $\frac{1}{4}$ & $e^{-\frac{2\pi i}{3}}$ & $e^{-\frac{2\pi i}{3}}$ & $e^{-\frac{2\pi i}{3}}$ & $e^{-\frac{2\pi i}{3}}$ & $e^{-\frac{2\pi i}{3}}$ & $e^{-\frac{2\pi i}{3}}$\tabularnewline
\hline
2 & $\frac{1}{4}$ & $e^{\frac{2\pi i}{3}}$ & $e^{\frac{2\pi i}{3}}$ & $e^{\frac{2\pi i}{3}}$ & $e^{\frac{2\pi i}{3}}$ & $e^{\frac{2\pi i}{3}}$ & $e^{\frac{2\pi i}{3}}$\tabularnewline
\hline
3 & $\frac{3}{4}$ & $0$ & $0$ & $0$ & $0$ & $0$ & $0$\tabularnewline
\hline
4 & $\frac{3}{2}$ & $0$ & $0$ & $0$ & $0$ & $0$ & $0$\tabularnewline
\hline
5 & $\frac{3}{2}$ & $0$ & $0$ & $0$ & $0$ & $0$ & $0$\tabularnewline
\hline
6 & $1$ & $e^{-\frac{\pi i}{9}}$ & $e^{\frac{5\pi i}{9}}$ & $e^{-\frac{7\pi i}{9}}$ & $e^{\frac{2\pi i}{9}}$ & $e^{-\frac{4\pi i}{9}}$ & $e^{\frac{8\pi i}{9}}$\tabularnewline
\hline
7 & $1$ & $e^{\frac{5\pi i}{9}}$ & $\frac{}{}$$ $$e^{-\frac{7\pi i}{9}}$ & $e^{-\frac{\pi i}{9}}$ & $e^{\frac{8\pi i}{9}}$ & $e^{\frac{2\pi i}{9}}$ & $e^{-\frac{4\pi i}{9}}$\tabularnewline
\hline
8 & $1$ & $e^{-\frac{7\pi i}{9}}$ & $e^{-\frac{\pi i}{9}}$ & $e^{\frac{5\pi i}{9}}$ & $e^{-\frac{4\pi i}{9}}$ & $e^{\frac{8\pi i}{9}}$ & $e^{\frac{2\pi i}{9}}$\tabularnewline
\hline
9 & $1$ & $e^{\frac{2\pi i}{9}}$ & $e^{\frac{8\pi i}{9}}$ & $e^{-\frac{4\pi i}{9}}$ & $e^{-\frac{4\pi i}{9}}$ & $e^{\frac{8\pi i}{9}}$ & $e^{\frac{2\pi i}{9}}$\tabularnewline
\hline
10 & $1$ & $e^{-\frac{4\pi i}{9}}$ & $e^{\frac{2\pi i}{9}}$ & $e^{\frac{8\pi i}{9}}$ & $e^{-\frac{8\pi i}{9}}$ & $e^{\frac{2\pi i}{9}}$ & $e^{-\frac{4\pi i}{9}}$\tabularnewline
\hline
11 & $1$ & $e^{\frac{8\pi i}{9}}$ & $e^{-\frac{4\pi i}{9}}$ & $e^{\frac{2\pi i}{9}}$ & $e^{\frac{2\pi i}{9}}$ & $e^{-\frac{4\pi i}{9}}$ & $e^{\frac{8\pi i}{9}}$\tabularnewline
\hline
12 & $1$ & $e^{\frac{\pi i}{9}}$ & $e^{-\frac{5\pi i}{9}}$ & $e^{\frac{7\pi i}{9}}$ & $e^{-\frac{2\pi i}{9}}$ & $e^{\frac{4\pi i}{9}}$ & $e^{-\frac{8\pi i}{9}}$\tabularnewline
\hline
13 & $1$ & $e^{-\frac{5\pi i}{9}}$ & $e^{\frac{7\pi i}{9}}$ & $e^{\frac{\pi i}{9}}$ & $e^{-\frac{8\pi i}{9}}$ & $e^{-\frac{2\pi i}{9}}$ & $e^{\frac{4\pi i}{9}}$\tabularnewline
\hline
14 & $1$ & $e^{\frac{7\pi i}{9}}$ & $e^{\frac{\pi i}{9}}$ & $e^{-\frac{5\pi i}{9}}$ & $e^{\frac{4\pi i}{9}}$ & $e^{-\frac{8\pi i}{9}}$ & $e^{-\frac{2\pi i}{9}}$\tabularnewline
\hline
15 & $1$ & $e^{-\frac{2\pi i}{9}}$ & $e^{-\frac{8\pi i}{9}}$ & $e^{\frac{4\pi i}{9}}$ & $e^{\frac{4\pi i}{9}}$ & $e^{-\frac{8\pi i}{9}}$ & $e^{-\frac{2\pi i}{9}}$\tabularnewline
\hline
16 & $1$ & $e^{\frac{4\pi i}{9}}$ & $e^{-\frac{2\pi i}{9}}$ & $e^{-\frac{8\pi i}{9}}$ & $e^{-\frac{8\pi i}{9}}$ & $e^{-\frac{2\pi i}{9}}$ & $e^{\frac{4\pi i}{9}}$\tabularnewline
\hline
17 & $1$ & $e^{-\frac{8\pi i}{9}}$ & $e^{\frac{4\pi i}{9}}$ & $e^{-\frac{2\pi i}{9}}$ & $e^{-\frac{2\pi i}{9}}$ & $e^{\frac{4\pi i}{9}}$ & $e^{-\frac{8\pi i}{9}}$\tabularnewline
\hline
18 & $\frac{1}{2}$ & $e^{-\frac{2\pi i}{3}}$ & $e^{-\frac{2\pi i}{3}}$ & $e^{-\frac{2\pi i}{3}}$ & $e^{\frac{\pi i}{3}}$ & $e^{\frac{\pi i}{3}}$ & $e^{\frac{\pi i}{3}}$\tabularnewline
\hline
19 & $\frac{1}{2}$ & $1$ & $1$ & $1$ & $-1$ & $-1$ & $-1$\tabularnewline
\hline
20 & $\frac{1}{2}$ & $e^{\frac{2\pi i}{3}}$ & $e^{\frac{2\pi i}{3}}$ & $e^{\frac{2\pi i}{3}}$ & $e^{-\frac{\pi i}{3}}$ & $e^{-\frac{\pi i}{3}}$ & $e^{-\frac{\pi i}{3}}$\tabularnewline
\hline
\end{tabular}

\begin{tabular}{|c|c|c|c|c|c|c|}
\hline
$\sqrt{18}S_{i,j}$ & 12 & 13 & 14 & 15 & 16 & 17\tabularnewline
\hline
\hline
0 & $1$ & $1$ & $1$ & $1$ & $1$ & 1\tabularnewline
\hline
1 & $e^{\frac{2\pi i}{3}}$ & $e^{\frac{2\pi i}{3}}$ & $e^{\frac{2\pi i}{3}}$ & $e^{\frac{2\pi i}{3}}$ & $e^{\frac{2\pi i}{3}}$ & $e^{\frac{2\pi i}{3}}$\tabularnewline
\hline
2 & $e^{-\frac{2\pi i}{3}}$ & $e^{-\frac{2\pi i}{3}}$ & $e^{-\frac{2\pi i}{3}}$ & $e^{-\frac{2\pi i}{3}}$ & $e^{-\frac{2\pi i}{3}}$ & $e^{-\frac{2\pi i}{3}}$\tabularnewline
\hline
3 & $0$ & $0$ & $0$ & $0$ & $0$ & $0$\tabularnewline
\hline
4 & $0$ & $0$ & $0$ & $0$ & $0$ & $0$\tabularnewline
\hline
5 & $0$ & $0$ & $0$ & $0$ & $0$ & $0$\tabularnewline
\hline
6 & $e^{\frac{\pi i}{9}}$ & $e^{-\frac{5\pi i}{9}}$ & $e^{\frac{7\pi i}{9}}$ & $e^{-\frac{2\pi i}{9}}$ & $e^{\frac{4\pi i}{9}}$ & $e^{-\frac{8\pi i}{9}}$\tabularnewline
\hline
7 & $e^{-\frac{5\pi i}{9}}$ & $e^{\frac{7\pi i}{9}}$ & $e^{\frac{\pi i}{9}}$ & $e^{-\frac{8\pi i}{9}}$ & $e^{-\frac{2\pi i}{9}}$ & $e^{\frac{4\pi i}{9}}$\tabularnewline
\hline
8 & $e^{\frac{7\pi i}{9}}$ & $e^{\frac{\pi i}{9}}$ & $e^{-\frac{5\pi i}{9}}$ & $e^{\frac{4\pi i}{9}}$ & $e^{-\frac{8\pi i}{9}}$ & $e^{-\frac{2\pi i}{9}}$\tabularnewline
\hline
9 & $e^{-\frac{2\pi i}{9}}$ & $e^{-\frac{8\pi i}{9}}$ & $e^{\frac{4\pi i}{9}}$ & $e^{\frac{4\pi i}{9}}$ & $e^{-\frac{8\pi i}{9}}$ & $e^{-\frac{2\pi i}{9}}$\tabularnewline
\hline
10 & $e^{\frac{4\pi i}{9}}$ & $e^{-\frac{2\pi i}{9}}$ & $e^{-\frac{8\pi i}{9}}$ & $e^{-\frac{8\pi i}{9}}$ & $e^{-\frac{2\pi i}{9}}$ & $e^{\frac{4\pi i}{9}}$\tabularnewline
\hline
11 & $e^{-\frac{8\pi i}{9}}$ & $e^{\frac{4\pi i}{9}}$ & $e^{-\frac{2\pi i}{9}}$ & $e^{-\frac{2\pi i}{9}}$ & $e^{\frac{4\pi i}{9}}$ & $e^{-\frac{8\pi i}{9}}$\tabularnewline
\hline
12 & $e^{-\frac{\pi i}{9}}$ & $e^{\frac{5\pi i}{9}}$ & $e^{-\frac{7\pi i}{9}}$ & $e^{\frac{2\pi i}{9}}$ & $e^{-\frac{4\pi i}{9}}$ & $e^{\frac{8\pi i}{9}}$\tabularnewline
\hline
13 & $e^{\frac{5\pi i}{9}}$ & $\frac{}{}$$ $$e^{-\frac{7\pi i}{9}}$ & $e^{-\frac{\pi i}{9}}$ & $e^{\frac{8\pi i}{9}}$ & $e^{\frac{2\pi i}{9}}$ & $e^{-\frac{4\pi i}{9}}$\tabularnewline
\hline
14 & $e^{-\frac{7\pi i}{9}}$ & $e^{-\frac{\pi i}{9}}$ & $e^{\frac{5\pi i}{9}}$ & $e^{-\frac{4\pi i}{9}}$ & $e^{\frac{8\pi i}{9}}$ & $e^{\frac{2\pi i}{9}}$\tabularnewline
\hline
15 & $e^{\frac{2\pi i}{9}}$ & $e^{\frac{8\pi i}{9}}$ & $e^{-\frac{4\pi i}{9}}$ & $e^{-\frac{4\pi i}{9}}$ & $e^{\frac{8\pi i}{9}}$ & $e^{\frac{2\pi i}{9}}$\tabularnewline
\hline
16 & $e^{-\frac{4\pi i}{9}}$ & $e^{\frac{2\pi i}{9}}$ & $e^{\frac{8\pi i}{9}}$ & $e^{-\frac{8\pi i}{9}}$ & $e^{\frac{2\pi i}{9}}$ & $e^{-\frac{4\pi i}{9}}$\tabularnewline
\hline
17 & $e^{\frac{8\pi i}{9}}$ & $e^{-\frac{4\pi i}{9}}$ & $e^{\frac{2\pi i}{9}}$ & $e^{\frac{2\pi i}{9}}$ & $e^{-\frac{4\pi i}{9}}$ & $e^{\frac{8\pi i}{9}}$\tabularnewline
\hline
18 & $e^{\frac{2\pi i}{3}}$ & $e^{\frac{2\pi i}{3}}$ & $e^{\frac{2\pi i}{3}}$ & $e^{-\frac{\pi i}{3}}$ & $e^{-\frac{\pi i}{3}}$ & $e^{-\frac{\pi i}{3}}$\tabularnewline
\hline
19 & $1$ & $1$ & $1$ & $-1$ & $-1$ & $-1$\tabularnewline
\hline
20 & $e^{-\frac{2\pi i}{3}}$ & $e^{-\frac{2\pi i}{3}}$ & $e^{-\frac{2\pi i}{3}}$ & $e^{\frac{\pi i}{3}}$ & $e^{\frac{\pi i}{3}}$ & $e^{\frac{\pi i}{3}}$\tabularnewline
\hline
\end{tabular}

\end{document}